\numberwithin{equation}{section}
\theoremstyle{plain}
\newtheorem{theorem}{Theorem}[section]
\newtheorem{corollary}[theorem]{Corollary}
\newtheorem{lemma}[theorem]{Lemma}
\newtheorem{proposition}[theorem]{Proposition}
\theoremstyle{definition}
\newtheorem{definition}[theorem]{Definition}
\newtheorem{remark}[theorem]{Remark}
\theoremstyle{remark}
\newcommand{\OO}{\mathcal O}
\newcommand{\A}{\mathbb{A}}
\newcommand{\R}{\mathbb{R}}
\newcommand{\Q}{\mathbb{Q}}
\newcommand{\Z}{\mathbb{Z}}
\newcommand{\C}{\mathbb{C}}
\renewcommand{\H}{\mathbb{H}}
\newcommand{\D}{\mathbb{D}}
\newcommand{\kzxz}[4]{\left(\begin{smallmatrix} #1 & #2 \\ #3 & #4\end{smallmatrix}\right) }
\newcommand{\tr}{\operatorname{tr}}
\newcommand{\Spin}{\operatorname{Spin}}
\newcommand{\Mp}{\operatorname{Mp}}
\newcommand{\End}{\operatorname{End}}
\newcommand{\Iso}{\operatorname{Iso}}
\newcommand{\Pic}{\operatorname{Pic}}
\newcommand{\Div}{\operatorname{Div}}
\newcommand{\diag}{\operatorname{diag}}
\newcommand{\ord}{\operatorname{ord}}
\newcommand{\kay}{k}
\newcommand{\ff}{\hbox{if }}
\newcommand{\SL}{\operatorname{SL}}
\newcommand{\CH}{\operatorname{CH}}
\newcommand{\GS}{\operatorname{GS}}
\newcommand{\MW}{\operatorname{MW}}
\begin{document}

\title{Twisted arithmetic Siegel Weil formula on $X_{0}(N)$}

\date{\today}
\author[Tuoping Du]{Tuoping Du}
\address{Department of Mathematics, Northwest University, Xi'an, 710127 ,  P.R. China}
\email{dtp1982@163.com}
\author[Tonghai Yang]{Tonghai Yang}
\address{Department of mathematics, Wisconsin University}
\email{thyang@math.wisc.edu}

\begin{abstract} In this paper,  we study twisted  arithmetic divisors on the modular curve $\mathcal X_0(N)$ with $N$ square-free. For each pair $(\Delta, r)$ where $\Delta >0$ and $\Delta \equiv r^2 \mod 4N$, we constructed a twisted arithmetic theta function $\widehat{\phi}_{\Delta, r}(\tau)$ which is a generating function of arithmetic twisted Heegner divisors. We prove the modularity of $\widehat{\phi}_{\Delta, r}(\tau)$, along  the way, we also identify the arithmetic pairing  $\langle  \widehat{\phi}_{\Delta, r}(\tau),\widehat{\omega}_N \rangle$  with special value of some Eisenstein series, where $\widehat{\omega}_N$ is a normalized  metric Hodge line bundle.

\end{abstract}

\dedicatory{}

\subjclass[2000]{11G15, 11F41, 14K22}

\thanks{The first author is  partially supported by NSFC(No.11401470), and the second author is partially supported by a NSF grant DMS-1500743.}

%\subjclass[2000]{14G35, 11F55, 11G18}

\maketitle

\section{introduction}\label{introduction}\pagenumbering{arabic}\setcounter{page}{1}

 Eisenstein series plays  an important role in the arithmetic geometry and number theory. Kudla
conjectured that  the derivative of Eisenstein series is closely related to the arithmetic intersection number on Shimura varieties, commonly called  arithmetic Siegel-Weil formula.
In  \cite{KRYComp}, Kudla, Rapoport and Yang
  gave such a formula on a division Shimura curve.
 Kudla and Yang worked out a result  on the modular curve $X_{0}(1)$ \cite{Yazagier}. The associated Eisenstein series is   Zagier's famous Eisenstein series \cite{HZ} of weight $3/2$. In \cite{BF1}, Bruinier and Funke gave another proof of the main formula in \cite{Yazagier}  via theta lifting.  We extended  the arithmetic Siegel-Weil formula to  modular curve $X_{0}(N)$ with $N$ square free in \cite{DY}.

 This paper is a sequel to \cite{DY}.  We replace the Heegner divisors by twisted Heegner divisors, which  were first studied  by   Bruinier and Ono in \cite{BruOno}. Interestingly, the derivative part of the Eisenstein series disappear in such a case.

  Let $N >0$ be a positive integer, and
let   \begin{equation}
V=\{w=\kzxz
    {w_1} { w_2 }
      {w_3}   {-w_1}  \in M_{2}(\Q) : \tr(w) =0 \},
\end{equation}
with the quadratic form $Q(w)=N \det{w}=-Nw_2w_3-Nw_1^{2}$.  Let
 \begin{equation}
L=\big\{w =\kzxz {b}{\frac{-a}{N}}{c}{-b}
   \in M_{2}(\Z) \mid   a, b, c \in \Z \big\}\subset V,
\end{equation}
and let $L^\sharp$ be its dual lattice.
Then $\SL_2\cong \Spin(V)$ acts on $V$  by conjugation, i.e.,  $g.w=gwg^{-1}$, and $\Gamma_0(N)$ acts on $L^\sharp/L$ trivially. Let $\D$ be the  associated Hermitian domain  of positive lines in  $V_\R$, then it is isomorphic to upper half plane $\H$ via (\ref{inverse}) which preserves the action of $\SL_2$. We
identify $X_{0}(N)$ with compactification of $\Gamma_{0}(N)\setminus \D$.

 For each $\mu \in  L^\sharp/L$, let  $L_{\mu}=\mu+L$ and
$$
L_\mu[n] =\{ w \in  L_\mu:\,  Q(w) = n\}.
$$

Let $\Delta \in \Z_{>0}$ be a positive fundamental discriminant which is a square modulo $4N$. Let $L^\Delta=\Delta L$ with quadratic form $Q^\Delta(x)= \frac{Q(x)}{\Delta}$, then its dual lattice $L^{\Delta,\sharp} =L^\sharp$.  Associated to $\Delta$ is a generalized `genus character'
$\chi_\Delta: L^\sharp/L^\Delta \rightarrow \{ \pm 1\}$, defined by Gross, Kohnen, and Zagier in \cite{GKZ}, which can be rephrased as a map (see Section \ref{sect:TwistedDivisor} for detail).  Fix a class $r \mod (2N)$ with $\Delta =r^2 \mod (4N)$, we have then a twisted Heegner divisor for
 any $\mu \in L^{\sharp}/L$ and a positive rational number $n \in Q(\mu) +\Z$,
 \begin{equation}\label{TwistedHeegner}
Z_{\Delta, r}(n, \mu):=\sum_{w \in \Gamma_{0}(N)\setminus L_{r\mu}[ \Delta n]}\chi_{\Delta}(w)Z(w) \in \Div(X_{0}(N))_{\Q},
\end{equation}
which is defined over $\Q(\sqrt{\Delta})$. Here $Z(w) = \R w$ is the point on $X_0(N)$ given by the positive line $\R w$.

We will construct  twisted Kudla's Green function $\Xi_{\Delta, r}(n , \mu, v)$  for $Z_{\Delta, r}(n, \mu)$  in Section \ref{sect:KudlaGreenFunction}. All these functions are smooth at cusps, which are different from the Green functions $\Xi(n, \mu, v)$ in \cite{DY}. These functions are well-defined and smooth when  $n \le 0$.

Now assume that $N$ is square free. Let $\mathcal X_0(N)$ be the  canonical integral model over $\Z$ of $X_0(N)$ as defined in \cite{KM} (see Section  \ref{sect:ArithIntersectionReview}).
 Then we could define twisted arithmetic divisors in arithmetic Chow group  $\widehat{\CH}^1_\R(\mathcal X_0(N))$ by \begin{eqnarray}
\widehat{\mathcal{Z}}_{\Delta, r}(n, \mu, v) = \begin{cases}
(\mathcal{Z}_{\Delta, r}(n, \mu), \Xi_{\Delta, r}(n , \mu, v)) &\ff n >0,
  \\
    (0, \Xi_{\Delta, r}(n , \mu, v)) &\ff \hbox{ otherwise},
    \end{cases}
\end{eqnarray}
where $\mathcal{Z}_{\Delta, r}(n, \mu)$ is the Zariski closure of $Z_{\Delta, r}(n, \mu)$ in  $\mathcal X_0(N)$.
Now we could define the twisted arithmetic theta function as follows.
\begin{definition}
\begin{equation} \label{eq:twistedtheta}
\widehat{\phi}_{\Delta, r}(\tau) = \sum_{\substack{ n \equiv Q(\mu) (\mod \Z)\\
                          \mu \in L^\sharp/L
                          }}
                           \widehat{\mathcal Z}_{\Delta, r}(n, \mu, v) q_\tau^n e_\mu  \in \widehat{\CH}^1_\R(\mathcal X_0(N))\otimes \C[L^\sharp/L][[q, q^{-1}]],
\end{equation}
where $q_{\tau}=e^{2\pi i\tau}$.
\end{definition}
 Let $\Gamma'$ be the metaplectic cover of $\SL_2(\Z)$ which  acts on $\C[L^\sharp/L]$ via the Weil representation $\rho_L$ (see (\ref{eq:WeilRepresentation})) and let $\{ e_\mu:\, \mu \in  L^\sharp/L\}$ be the standard basis of $\C[L^\sharp/L]$. Then
$$
E_L(\tau, s) = \sum \limits_{ \gamma ^{\prime} \in \Gamma_{\infty} ^{\prime}\diagdown
\Gamma^{\prime}} \big(v^{\frac{s-1}{2}}e_{\mu_{0}} \big)\mid_{3/2, \rho_{L}} \gamma ^{\prime}
$$
is a vector valued Eisenstein series of weight $3/2$, where  the Petersson slash operator is defined on functions $f:\H\rightarrow \C[L^{\sharp} / L]$ by
 $$\big(f \mid_{3/2, \rho_{L}} \gamma ^{\prime}\big)(\tau)=\phi(\tau)^{-3}\rho_{L}^{-1}(\gamma ^{\prime})f(\gamma \tau),$$ and $\gamma ^{\prime}=(\gamma,
 \phi)\in \Gamma^{\prime} $.  Define  the normalized Eisenstein series \cite[equation (1.5)]{DY}
\begin{equation}\label{vectormodularform}
\mathcal{E}_{L}(\tau,s)=- \frac{s}{4} \pi^{-s-1}\Gamma(s)\zeta^{(N)}(2s)N^{\frac{1}{2}+\frac{3}{2}s} E_L(\tau, s),
\end{equation}
where
$$
\zeta^{(N)}(s) = \zeta(s) \prod_{p|N} (1-p^{-s}).
$$

Let $\widehat{\omega}_N$ be the  metrized Hodge bundle on $\mathcal X_0(N)$ with certain  normalized Petersson metric (see (\cite{Kuhn2}) and (\ref{eq1.11})).
The main result of this paper is as follows.
\begin{theorem} \label{Main} Let  $\Delta>1$ be a fundamental discriminant, then
\begin{equation}\label{degree}
\deg \widehat{\phi}_{\Delta, r}(\tau)= 0
\end{equation}
and
\begin{equation}\label{intersectionformula}
\langle \widehat{\phi}_{\Delta, r}(\tau), \widehat{\omega}_N \rangle = \frac{1}{\varphi(N)}\log(u_{\Delta})h(\Delta)\mathcal{E}_{L}(\tau, 1),
\end{equation} where $u_{\Delta}>1$ is the fundamental unit of quadratic field  $\Q(\sqrt{\Delta})$, $h(\Delta)$ is its  class number and $\varphi$ is the Euler function.
\end{theorem}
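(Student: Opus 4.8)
The plan is to prove the two assertions of Theorem \ref{Main} separately, treating the degree statement \eqref{degree} as a warm-up and then establishing the intersection formula \eqref{intersectionformula} by matching the arithmetic generating series against the normalized Eisenstein series coefficient-by-coefficient. The overarching strategy is the standard Kudla-program decomposition: split the arithmetic pairing $\langle \widehat{\mathcal Z}_{\Delta,r}(n,\mu,v), \widehat\omega_N\rangle$ into a finite (geometric) contribution coming from the Zariski closure $\mathcal Z_{\Delta,r}(n,\mu)$ and an archimedean (Green-function) contribution coming from $\Xi_{\Delta,r}(n,\mu,v)$, compute each Fourier coefficient in $q^n_\tau$, and recognize the total as the corresponding coefficient of $\frac{1}{\varphi(N)}\log(u_\Delta)h(\Delta)\,\mathcal E_L(\tau,1)$.

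\textbf{Degree statement.} First I would prove \eqref{degree}. For $n>0$, $\deg \widehat{\mathcal Z}_{\Delta,r}(n,\mu,v)$ is (up to the archimedean normalization) the degree of the twisted Heegner divisor $Z_{\Delta,r}(n,\mu)$, which by \eqref{TwistedHeegner} is a character sum $\sum_{w}\chi_\Delta(w)$ over $\Gamma_0(N)\backslash L_{r\mu}[\Delta n]$. The key point, and the reason the twist is introduced, is that for $\Delta>1$ the genus character $\chi_\Delta$ is a \emph{nontrivial} character on the relevant orbit set, so the signed count cancels and the degree vanishes; I would make this precise by pairing $\chi_\Delta$ against the trivial character and invoking orthogonality, exactly as in the untwisted computation of \cite{DY} but now with the extra sign. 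For $n\le 0$ the divisor part is zero by construction, so only the Green function contributes and one checks its total mass integrates to zero against the twist. This is the mechanism behind the remark in the introduction that ``the derivative part of the Eisenstein series disappears.''

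\textbf{Intersection formula.} For \eqref{intersectionformula} I would fix $n$ and $\mu$ and compute $\langle \widehat{\mathcal Z}_{\Delta,r}(n,\mu,v), \widehat\omega_N\rangle$ as a single Fourier coefficient. Because $\widehat\omega_N$ is the metrized Hodge bundle, pairing a horizontal arithmetic cycle against it computes (via the arithmetic adjunction / the definition of the Faltings height with respect to $\widehat\omega_N$) a weighted sum of ``Faltings heights'' of the CM points appearing in $Z_{\Delta,r}(n,\mu)$, twisted by $\chi_\Delta(w)$. Each such CM point lies on $\mathcal X_0(N)$ and corresponds to an elliptic curve (or a $\Gamma_0(N)$-level structure on one) with CM by an order in $\Q(\sqrt{\Delta n/\cdots})$; the height of the Hodge bundle at such a point is governed by the Chowla–Selberg formula, which produces $\log$ of products of the Gamma function at CM values, and after summing over the Galois orbit these assemble into logarithmic derivatives of Dirichlet $L$-functions. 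The archimedean piece $\langle (0,\Xi_{\Delta,r}(n,\mu,v)), \widehat\omega_N\rangle$ is $\frac{1}{2}\int_{X_0(N)} \Xi_{\Delta,r}(n,\mu,v)\,\Omega$ against the first Chern form $\Omega$ of $\widehat\omega_N$; since $\Xi_{\Delta,r}$ is smooth at the cusps (as emphasized in the excerpt) this integral converges and can be evaluated by unfolding the Green function, producing the smooth/holomorphic part of the Eisenstein coefficient. Assembling both pieces and comparing with the explicit Fourier expansion of $\mathcal E_L(\tau,1)$ in \eqref{vectormodularform}---whose $(n,\mu)$-coefficient is a product of local Whittaker integrals computed as in \cite{DY}---yields the constant $\frac{1}{\varphi(N)}\log(u_\Delta)h(\Delta)$, with $\log(u_\Delta)$ and $h(\Delta)$ emerging as the regulator and class number of $\Q(\sqrt\Delta)$ via the class number formula for the real quadratic field, and $\frac{1}{\varphi(N)}$ tracking the $\mu$-averaging over $L^\sharp/L$.

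\textbf{Main obstacle.} The hardest step is the geometric/finite part: computing the arithmetic intersection of the horizontal CM divisor $\mathcal Z_{\Delta,r}(n,\mu)$ with $\widehat\omega_N$ and showing that, \emph{after} weighting by the genus character $\chi_\Delta$ and summing over $n$, the derivative-of-$L$-function contributions reorganize into the \emph{value} (not derivative) of the Eisenstein series at $s=1$. This is precisely where the twist does its work: for $\Delta>1$ the functional equation forces the Eisenstein series $E_L(\tau,s)$ to be incoherent-free in the relevant component, so its value at $s=1$ is already the holomorphic object and no central derivative survives. Making this cancellation rigorous requires carefully matching the Chowla–Selberg logarithmic terms at each prime against the local Whittaker functions of the Eisenstein series, and controlling the contribution at the finitely many primes $p\mid N$ where the integral model of \cite{KM} has bad reduction; I expect the bulk of the technical effort to lie in this local analysis at $p\mid N$ and in verifying that the archimedean and finite contributions combine cleanly coefficient-by-coefficient rather than merely in aggregate.
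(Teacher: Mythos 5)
Your degree argument is essentially viable and is in fact an alternative the authors themselves could have used: for $n>0$ the signed count $\sum_w\chi_\Delta(w)$ over $\Gamma_0(N)\backslash L_{r\mu}[\Delta n]$ vanishes because these orbits assemble into full class-group orbits on which the genus character is nontrivial when $\Delta>1$ (you should say this precisely rather than just invoking ``orthogonality''), and for $n\le 0$ the mass $\int_X\omega_{\Delta,r}(n,\mu,v)$ is the $(n,\mu)$-coefficient of $I_{\Delta,r}(\tau,1)$. The paper instead obtains \eqref{degree} in one stroke by taking residues at $s=1$ in Theorem \ref{eisensteinlift}: since $\Lambda(\varepsilon_\Delta,s)$ is regular at $s=1$ for $\Delta>1$, one gets $I_{\Delta,r}(\tau,1)=0$ (Corollary \ref{thetaintegral}), and the degree map is exactly this theta integral.

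For the intersection formula \eqref{intersectionformula}, however, your route has a genuine gap. You propose to compute the finite part as a $\chi_\Delta$-twisted sum of Faltings heights of CM points via Chowla--Selberg and then to argue that the resulting $L'$-terms ``reorganize'' into the \emph{value} of the Eisenstein series because $E_L(\tau,s)$ is ``incoherent-free.'' That reorganization is precisely the hard step, you do not carry it out, and the mechanism you cite is not the right one: $E_L(\tau,s)$ is the same coherent Eisenstein series for every $\Delta$, and the derivative disappears simply because $\Lambda(\varepsilon_\Delta,s)$ is holomorphic at $s=1$ for $\Delta>1$ (for $\Delta=1$ its pole at $s=1$ is what produces $\mathcal E_L'(\tau,1)$ through the Kronecker limit formula). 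The paper's proof avoids Chowla--Selberg entirely by representing $\widehat{\omega}_N$ as $\frac{1}{k}\widehat{\Div}(\Delta_N)$ with $k=12\varphi(N)$: the divisor of $\Delta_N$ is supported on the cusp $\mathcal P_\infty$ and the vertical components $\mathcal X_p^0$, so the finite intersection with the horizontal cycles $\mathcal Z_{\Delta,r}(n,\mu)$ is zero; the vertical contributions vanish by the Atkin--Lehner symmetry of $\chi_\Delta$ together with $I_{\Delta,r}(\tau,1)=0$ (Proposition \ref{vertical}); and the boundary term of the star product dies because $\Xi_{\Delta,r}(n,\mu,v)\to 0$ at the cusps (Theorem \ref{theo:Singularity}). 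What survives is exactly $-\frac{1}{k}\,I_{\Delta,r}(\tau,\log\|\Delta_N\|^2)$, which Theorem \ref{theoremlift} evaluates via the Kronecker limit formula, the lift identity of Theorem \ref{eisensteinlift}, and Dirichlet's class number formula $\sqrt{\Delta}\,\Lambda(\varepsilon_\Delta,1)=\log(u_\Delta)h(\Delta)$. Your proposal never invokes the section $\Delta_N$ or the Kronecker limit formula, which is the idea that makes the theorem tractable; without it, the ``local analysis at $p\mid N$'' and the Chowla--Selberg matching you defer to amount to an open-ended project rather than a proof.
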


\begin{comment}
\begin{remark}
We could prove the first equation by two different ways. One is taking the residue of both sides of Theorem \ref{eisensteinlift}, the other is a direct result from Theorem \ref{theo:Singularity}. The theta integral is to count the number of log singularities.  The function $\Xi_{\Delta, r}(n, \mu, v)$  has log singularities along $Z_{\Delta, r}(n, \mu)$, from the fact $\deg{Z_{\Delta, r}(n, \mu)}=0$, one could reprove $\emph{I}_{\Delta}(\tau, 1)=0$.

\end{remark}
\end{comment}

It is interesting to compare it with the main result in  \cite{DY}, which we state it here for convenience.

\begin{theorem}\cite[Theorem 1.3]{DY}  When $\Delta =1$,
$$
\deg \widehat{\phi}_{\Delta, r}(\tau)= \frac{2}{\varphi(N)}\mathcal E_L(\tau, 1),
$$
and
$$
\langle \widehat{\phi}_{\Delta, r}(\tau), \widehat{\omega}_N \rangle =\frac{1}{\varphi(N)}\bigg(\mathcal E_L'(\tau, 1)-\sum_{p|N} \frac{p}{p-1} \mathcal E_L(\tau, 1) \log p \bigg).
$$
\end{theorem}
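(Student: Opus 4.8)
The plan is to prove the two assertions separately: the degree identity is a geometric (special-value) statement, while the height identity is the genuinely arithmetic (derivative) statement. When $\Delta=1$ the genus character $\chi_\Delta$ is trivial, the twisted divisor $Z_{\Delta,r}(n,\mu)$ collapses to the ordinary Heegner divisor $\sum_{w\in\Gamma_0(N)\backslash L_\mu[n]}Z(w)$, and the whole construction specializes to that of \cite{DY}. Crucially, in this case the relevant Green functions are those of \cite{DY} with logarithmic singularities along the cusps, not the smooth twisted $\Xi_{\Delta,r}$ of the present paper; it is exactly this boundary behaviour, together with the bad reduction at $p\mid N$, that forces the appearance of a derivative and of the $\log p$ corrections, in contrast with the vanishing-derivative phenomenon for $\Delta>1$.

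For the degree formula I would compute $\deg Z(n,\mu)$ coefficient by coefficient. For $n>0$ this is the weighted number of Heegner points of discriminant $n$ on $X_0(N)$, which factors as a product of local representation densities of the coset $L_\mu$; the $n=0$, $\mu=0$ coefficient carries the Hodge contribution $-\deg\omega_N$ built into the constant term of $\widehat{\phi}$. By the classical Siegel–Weil formula these densities assemble into the Fourier coefficients of the value $E_L(\tau,s)$ at the weight-$3/2$ point $s=1$, so that $\sum_{n,\mu}\deg Z(n,\mu)\,q_\tau^n e_\mu$ is a scalar multiple of $\mathcal{E}_L(\tau,1)$; carrying the normalizations of $L^\sharp/L$ and of $\Gamma_0(N)$ through (\ref{vectormodularform}) produces the constant $2/\varphi(N)$. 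Since the modularity of the degree generating series is available (it is a vector-valued form of weight $3/2$), it is in fact enough to match finitely many Fourier coefficients against $\mathcal{E}_L(\tau,1)$, reducing the identity to a finite class-number computation.

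For the height formula I would expand the Gillet–Soulé pairing coefficient by coefficient and match each Fourier coefficient to the corresponding coefficient of the derivative of the Eisenstein series through a \emph{local} arithmetic Siegel–Weil identity at every place of $\Q$. Writing
\[
\langle\widehat{\mathcal Z}_{\Delta,r}(n,\mu,v),\widehat{\omega}_N\rangle=(\mathcal Z(n,\mu)\cdot\omega_N)_{\mathrm{fin}}+\tfrac12\int_{X_0(N)(\C)}\Xi(n,\mu,v)\,c_1(\widehat{\omega}_N),
\]
the archimedean integral is evaluated by the theta-lift/unfolding computation of \cite{BF1} and yields the $s$-derivative at $s=1$ of the archimedean Whittaker coefficient of $E_L(\tau,s)$, while at a good prime $p\nmid N$ the local intersection multiplicity of the closure $\mathcal Z(n,\mu)$ on $\mathcal X_0(N)$ equals the $s$-derivative of the corresponding local density. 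These contributions assemble, via the logarithmic-derivative (product) structure of the Whittaker factorization, into $\tfrac1{\varphi(N)}\mathcal{E}_L'(\tau,1)$. The bad primes $p\mid N$, where $\mathcal X_0(N)$ has non-smooth (semistable) reduction with components crossing at supersingular points in the Deligne–Rapoport/KM model and where the Kühn metric on $\widehat{\omega}_N$ carries an extra logarithmic normalization, break the naive matching and leave the residual terms $-\tfrac1{\varphi(N)}\sum_{p\mid N}\tfrac{p}{p-1}\mathcal{E}_L(\tau,1)\log p$; summing all local contributions gives the stated formula.

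The main obstacle is the finite part at the bad primes $p\mid N$: one must make the integral model $\mathcal X_0(N)$ explicit near its supersingular points, compute the local arithmetic intersection of $\mathcal Z(n,\mu)$ with $\omega_N$ there, and reconcile the naive arithmetic degree of $\omega$ on the special fiber with the Kühn-normalized metric of (\ref{eq1.11}). The constant $\tfrac{p}{p-1}=(1-p^{-1})^{-1}$ is a strong consistency check, since it is precisely the Euler factor at $p$ that (\ref{vectormodularform}) removes from $\zeta$ via $\zeta^{(N)}$, evaluated at $s=1$, so the bad-prime bookkeeping is delicate but structurally forced. A secondary difficulty is the rigorous regularization of the archimedean theta integral at the non-absolutely-convergent point $s=1$, handled as in the Kudla–Yang and Bruinier–Funke analyses.
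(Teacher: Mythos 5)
Your overall strategy --- coefficient-by-coefficient matching of local representation densities and their derivatives at each place, in the style of Kudla--Yang \cite{Yazagier} --- is a legitimate alternative program, but it is not the route taken here, and as written it has three concrete gaps. The quoted theorem is proved in \cite{DY} by the same global method this paper uses for $\Delta>1$: unfold the Kudla--Millson theta lift of $\mathcal{E}(N,z,s)$ (Theorem \ref{eisensteinlift}), take the residue at $s=1$ to get $\deg\widehat{\phi}=I(\tau,1)=\frac{2}{\varphi(N)}\mathcal{E}_L(\tau,1)$ (Corollary \ref{thetaintegral}), and use the Kronecker limit formula for $\Gamma_0(N)$ to convert the limit at $s=1$ into $-\frac{1}{12}I(\tau,\log\|\Delta_N\|)=\mathcal{E}_L'(\tau,1)$ (Theorem \ref{theoremlift}); the height pairing is then assembled from $\widehat{\omega}_N=\frac{1}{k}\widehat{\Div}(\Delta_N)$ and the explicit divisor of Lemma \ref{divisorlemma}. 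In particular, the derivative $\mathcal{E}_L'(\tau,1)$ is produced \emph{globally} by differentiating the lifted Eisenstein identity, not by summing local Whittaker derivatives place by place.

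The gaps in your version. First, the degree bookkeeping is wrong: in this construction $\widehat{\mathcal Z}(0,\mu,v)=(0,\Xi(0,\mu,v))$, so there is no $-\deg\omega_N$ "Hodge contribution" in the constant term; the $n\le 0$ coefficients contribute $v$-dependent integrals $\int_{X_0(N)}\omega(n,\mu,v)$ matching the \emph{nonholomorphic} coefficients of the Zagier-type series $\mathcal{E}_L(\tau,1)$, so "match finitely many Fourier coefficients" does not close the argument for a nonholomorphic identity. Second, at the bad primes your $\frac{p}{p-1}$ is asserted by an Euler-factor consistency heuristic, not derived; in the actual proof it comes from Lemma \ref{divisorlemma}, $\Div\Delta_N=\frac{rk}{12}\mathcal{P}_\infty-k\sum_{p|N}\frac{p}{p-1}\mathcal{X}_p^0$, combined with the relation $\mathcal X_p\equiv(0,\log p^2)$ in $\widehat{\CH}^1_\R$ and the Atkin--Lehner symmetry $w_N^*\mathcal X_p^0=\mathcal X_p^\infty$, which collapses each vertical pairing to $\frac12 I(\tau,1)\log p$ (compare Proposition \ref{vertical}); no supersingular-point intersection computation enters. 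Third, your displayed pairing formula $(\mathcal Z\cdot\omega_N)_{fin}+\frac12\int\Xi\,c_1(\widehat\omega_N)$ silently uses the smooth Gillet--Soul\'e theory, but for $\Delta=1$ both $\Xi(n,\mu,v)$ and $\|\Delta_N\|$ are singular at the cusps (see Lemma \ref{lem:Delta_N}), so one must use K\"uhn's extended star product \cite{Kuhn} with its $\alpha_j,\beta_j,\psi_j$ boundary corrections at the cusps --- exactly the terms your own first paragraph identifies as the source of the derivative, yet your computation omits them.
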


Here is the basic idea in  the proof of Theorem \ref{Main}.
 For any $\Gamma_{0}(N)$ invariant function $f$, we define the twisted theta lift  by
\begin{equation}
 I_{\Delta, r}(\tau, f)=\int_{X_{0}(N)}f(z)\Theta_{\Delta, r}(\tau, z),
 \end{equation}
where $ \Theta_{\Delta, r}(\tau, z)$ is the twisted Kudla-Millson theta kernel defined by (\ref{eq:TwistedTheta}), following  \cite[Section 4]{AE}.

Define the normalized Eisenstein series of weight $0$ as in  \cite{DY} by
$$\mathcal{E}(N, z, s)=N^{2s}\pi^{-s}\Gamma(s)\zeta^{(N)}(2s)\sum_{\gamma \in \Gamma_{\infty }\setminus\Gamma_0(N)}(\Im(\gamma z))^{s}.$$
 We prove the following theorem in Section \ref{sectheta}.

\begin{theorem}\label{eisensteinlift}
 When $\Delta$ is a fundamental discriminant,
 \begin{equation}\label{lifteisenstein}
 I_{\Delta, r}(\tau, \mathcal{E}(N, z, s))=\Delta^{\frac{s}{2}}\Lambda(\varepsilon_{\Delta},
 s)\mathcal{E}_{L}(\tau, s),
 \end{equation}
  where $\Lambda(\varepsilon_{\Delta}, s)=L(\varepsilon_{\Delta},s)\Gamma(\frac{s}{2})\pi^{-\frac{s}{2}}$ is the completed L-series associated to the character $\varepsilon_{\Delta}(n)=\big(\frac{\Delta}{n}\big)$.
 \end{theorem}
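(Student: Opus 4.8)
The plan is to compute the lift by Rankin--Selberg unfolding. Since the normalized series $\mathcal{E}(N,z,s)$ is, up to the explicit factor $N^{2s}\pi^{-s}\Gamma(s)\zeta^{(N)}(2s)$, the Poincar\'e series $\sum_{\gamma\in\Gamma_\infty\backslash\Gamma_0(N)}(\Im\gamma z)^s$, and since the twisted Kudla--Millson theta kernel $\Theta_{\Delta,r}(\tau,z)$ is $\Gamma_0(N)$-invariant in the $z$-variable, I would fold the coset sum into the domain of integration. This collapses the integral over $X_0(N)=\Gamma_0(N)\backslash\H$ into an integral over the strip $\Gamma_\infty\backslash\H=\{0\le x<1,\ y>0\}$ against $y^s\,\tfrac{dx\,dy}{y^2}$; concretely,
\begin{equation}
I_{\Delta,r}(\tau,\mathcal E(N,z,s)) \ \sim\ \int_{\Gamma_\infty\backslash\H} (\Im z)^s\,\Theta_{\Delta,r}(\tau,z)\,\frac{dx\,dy}{y^2},
\end{equation}
up to the normalizing constant above.

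Next I would insert the explicit formula (\ref{eq:TwistedTheta}) for $\Theta_{\Delta,r}(\tau,z)$, that is, the sum over $w$ in the relevant coset weighted by the genus character $\chi_\Delta(w)$ and the Kudla--Millson Schwartz form, and carry out the two archimedean integrations separately. The $x$-integration over $0\le x<1$ annihilates all but the translation-invariant part of the kernel, selecting the lattice vectors whose $\Gamma_\infty$-orbit contributes to a given Fourier coefficient; the remaining $y$-integration of $y^s$ against the Gaussian component yields the archimedean factor $\Gamma(\tfrac s2)\pi^{-s/2}$ together with the correct power $v^{(s-1)/2}$ of $\Im\tau$, matching the seed $v^{(s-1)/2}e_{\mu_0}$ of $E_L(\tau,s)$. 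The rescaling $Q^\Delta=Q/\Delta$ built into $L^\Delta=\Delta L$ is what contributes the overall factor $\Delta^{s/2}$.

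The arithmetic heart of the argument is the lattice sum that survives the $x$-integration. After unfolding, the sum over the remaining vectors weighted by $\chi_\Delta$ should factor over primes and be exactly the Dirichlet series $\sum_n \varepsilon_\Delta(n)n^{-s}=L(\varepsilon_\Delta,s)$; combined with the Gamma factor this assembles the completed $L$-function $\Lambda(\varepsilon_\Delta,s)$. The residual combinatorial sum then reproduces, component by component in $e_\mu$, the $\gamma'\in\Gamma_\infty'\backslash\Gamma'$ expansion defining $E_L(\tau,s)$, and feeding this through the normalization (\ref{vectormodularform}) recovers $\mathcal E_L(\tau,s)$ with the claimed constant. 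I expect the main obstacle to be precisely this step: evaluating the genus-character sum and showing it equals $L(\varepsilon_\Delta,s)$, which requires the multiplicativity of $\chi_\Delta$ and a careful Gauss-sum computation, together with the tedious bookkeeping of the factors of $N$, $\pi$, $\zeta^{(N)}$ and $\Delta$ needed to land precisely on the normalized $\mathcal E_L(\tau,s)$. As a consistency check I would specialize to $\Delta=1$ and compare with the untwisted lift in \cite{DY}; and I would independently verify the transformation law by noting that both sides are weight-$3/2$ forms for $\rho_L$ and eigenfunctions of the weight-$3/2$ Laplacian with eigenvalue determined by $s$ (since the theta kernel intertwines the weight-$0$ Laplacian in $z$ with the weight-$3/2$ Laplacian in $\tau$), which pins down the lift as a multiple of $\mathcal E_L(\tau,s)$ and reduces the whole identity to matching a single coefficient.
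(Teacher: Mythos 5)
Your overall strategy coincides with the paper's: both unfold the $\Gamma_\infty\backslash\Gamma_0(N)$ sum defining $\mathcal E(N,z,s)$ against the $\Gamma_0(N)$-invariant kernel, reduce to an integral over the strip $\Gamma_\infty\backslash\H$, and separate the $x$- and $y$-integrations. The real difference is where the ``arithmetic heart'' you flag gets done. The paper does not insert the raw lattice-sum definition (\ref{eq:TwistedTheta}) and then evaluate the genus-character sum inside the unfolded integral; instead it first invokes the Alfes--Ehlen kernel identity (Proposition \ref{thetakernal}), which rewrites $\Theta_{\Delta,r}(\tau,z)$ via Poisson summation along an isotropic direction as a Poincar\'e series over $\gamma'\in\Gamma_\infty'\backslash\Gamma'$ built from the rank-one lattice $K=\Z$, $Q(x)=-Nx^2$, in which the sum of $\chi_\Delta$ over the relevant $\Gamma_\infty$-orbits has \emph{already} been collapsed to the single character value $\bigl(\tfrac{\Delta}{n}\bigr)$ attached to each $n\ge 1$. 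After that, the $x$-integral kills every $\lambda\ne 0$ in $K^\sharp$ and leaves $e_{\mu_0}$, the $y$-integral produces $\Gamma(\tfrac s2+1)$ and the powers of $v$, $\Delta$, $N$, and the $n$-sum $\sum_n n^{2}\bigl(\tfrac{\Delta}{n}\bigr)n^{-s-2}$ is literally $L(\varepsilon_\Delta,s)$ --- no further Gauss-sum or multiplicativity argument is needed at that stage. So your sketch is correct in outline, but the step you defer as ``the main obstacle'' is precisely the content of the quoted kernel formula; to make your version self-contained you would have to reprove that formula (i.e.\ carry out the Poisson summation in the fiber of the Witt decomposition and show the $\chi_\Delta$-weighted orbit sum equals $\bigl(\tfrac{\Delta}{n}\bigr)$), which is where essentially all of the work lies. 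Your closing remark --- pinning down the lift as a multiple of $\mathcal E_L(\tau,s)$ by Laplacian eigenvalue considerations and matching one coefficient --- is a legitimate shortcut the paper does not use, but it still requires computing that one coefficient, which again amounts to the same character-sum evaluation.
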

Take the residue of both sides, we obtain the first identity in Theorem \ref{Main}.
For a modular form $f$ of weight $k$, we define its  renormalized Petersson metric as
\begin{equation} \label{eq1.11}
\| f(z)\| = |f(z) (4 \pi e^{-C} y)^{\frac{k}2}|
\end{equation}
where $C=\frac{\log 4\pi +\gamma}{2}$.

Combining the above theorem with the  Kronecker limit formula for $\Gamma_{0}(N)$ \cite[Theorem 1.5]{DY}, we obtain  the following result.
\begin{theorem}\label{theoremlift}
\begin{equation}
-\frac{1}{12}\emph{I}_{\Delta, r}(\tau, \log \parallel \Delta_{N} \parallel)= \begin{cases} \mathcal{E}_{L}^{\prime}(\tau,1) &\ff \Delta=1,\\
\log(u_{\Delta})h(\Delta)\mathcal{E}_{L}(\tau, 1)& \ff \Delta>1.
\end{cases}
\end{equation}
\end{theorem}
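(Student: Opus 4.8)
The plan is to deduce Theorem \ref{theoremlift} from the lifting identity of Theorem \ref{eisensteinlift} by comparing the Laurent expansions of both sides at $s=1$, the bridge being the Kronecker limit formula for $\Gamma_0(N)$ \cite[Theorem 1.5]{DY}, which recognizes $\log\|\Delta_N(z)\|$ inside the weight-$0$ Eisenstein series $\mathcal{E}(N,z,s)$. Concretely, that formula expresses the finite part of $\mathcal E(N,z,s)$ at $s=1$ in the shape
\[
\CT_{s=1}\mathcal{E}(N,z,s) = -\tfrac{1}{12}\log\|\Delta_N(z)\| + \kappa ,
\]
with $\kappa$ a constant independent of $z$ and $\|\cdot\|$ the renormalized metric of (\ref{eq1.11}). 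Applying the $\C$-linear theta lift $I_{\Delta,r}(\tau,\cdot)$ termwise to the Laurent expansion of $\mathcal E(N,z,s)$ and comparing with the right-hand side $\Delta^{s/2}\Lambda(\varepsilon_\Delta,s)\mathcal{E}_L(\tau,s)$ will split into the two cases. The first point I would verify is that the regularized integral defining $I_{\Delta,r}(\tau,\mathcal E(N,z,s))$ is meromorphic in $s$ with principal and constant parts computed by lifting the Laurent coefficients of $\mathcal E(N,z,s)$ separately; this interchange of lift and expansion is where the analytic care is needed.

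For $\Delta>1$ the character $\varepsilon_\Delta$ is nontrivial, so $\Lambda(\varepsilon_\Delta,s)$ is entire and the right-hand side is holomorphic at $s=1$. On the left, $\mathcal E(N,z,s)$ has a simple pole with constant residue at $s=1$, but both this residue and the constant $\kappa$ are lifts of constant functions, hence are killed by $I_{\Delta,r}(\tau,\cdot)$ because $\deg Z_{\Delta,r}(n,\mu)=0$, equivalently $I_{\Delta,r}(\tau,1)=0$, which is exactly identity (\ref{degree}) obtained by taking residues in Theorem \ref{eisensteinlift}. Evaluating at $s=1$ therefore gives
\[
-\tfrac{1}{12}\,I_{\Delta,r}(\tau,\log\|\Delta_N\|) = \Delta^{1/2}\Lambda(\varepsilon_\Delta,1)\,\mathcal{E}_L(\tau,1).
\]
Since $\Gamma(1/2)\pi^{-1/2}=1$ we have $\Lambda(\varepsilon_\Delta,1)=L(\varepsilon_\Delta,1)$, and Dirichlet's class number formula for the real quadratic field $\Q(\sqrt{\Delta})$ evaluates $\sqrt{\Delta}\,L(\varepsilon_\Delta,1)$ in terms of $h(\Delta)$ and $\log u_\Delta$, producing the factor $\log(u_\Delta)h(\Delta)$ with the normalization of $u_\Delta$ fixed in the statement.

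For $\Delta=1$ the character is trivial, $\Lambda(\varepsilon_1,s)=\pi^{-s/2}\Gamma(s/2)\zeta(s)$ has a simple pole at $s=1$, and one compares constant terms rather than values. Writing the right-hand side as $h(s)\zeta(s)$ with $h(s)=\pi^{-s/2}\Gamma(s/2)\mathcal{E}_L(\tau,s)$ holomorphic at $s=1$, the expansion $\zeta(s)=(s-1)^{-1}+\gamma+O(s-1)$ yields constant term $h'(1)+\gamma h(1)$; using $h(1)=\mathcal{E}_L(\tau,1)$ and $\psi(1/2)=-\gamma-2\log2$ this equals $\mathcal{E}_L'(\tau,1)+\big(\tfrac{\gamma}{2}-\tfrac12\log\pi-\log2\big)\mathcal{E}_L(\tau,1)$. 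On the left the pole term reproduces the degree identity for $\Delta=1$ (via $a_{-1}I_{1,r}(\tau,1)=\mathcal E_L(\tau,1)$), and the constant term is $-\tfrac1{12}I_{1,r}(\tau,\log\|\Delta_N\|)+\kappa\,I_{1,r}(\tau,1)$. The main obstacle, and the reason for introducing the renormalized metric (\ref{eq1.11}), is to show that $\kappa\,I_{1,r}(\tau,1)$ precisely cancels the spurious term $\big(\tfrac{\gamma}{2}-\tfrac12\log\pi-\log2\big)\mathcal{E}_L(\tau,1)$: the constant $\kappa$ produced by the Kronecker limit formula shifts by $-\tfrac12\log(4\pi e^{-C})$ under the passage from the naive to the renormalized metric, and the choice $C=\tfrac{\log4\pi+\gamma}{2}$, for which $\log(4\pi e^{-C})=\log2+\tfrac12\log\pi-\tfrac{\gamma}{2}$, is engineered to make exactly this cancellation, leaving the clean identity $-\tfrac1{12}I_{1,r}(\tau,\log\|\Delta_N\|)=\mathcal{E}_L'(\tau,1)$.
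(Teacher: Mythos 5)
Your proposal follows the paper's proof essentially verbatim: for $\Delta>1$ both arguments combine Theorem \ref{eisensteinlift} with the Kronecker limit formula of \cite[Theorem 1.5]{DY}, use $I_{\Delta,r}(\tau,1)=0$ (Corollary \ref{thetaintegral}, itself obtained by taking residues in Theorem \ref{eisensteinlift}) to kill the polar term and the additive constant, and conclude with Dirichlet's class number formula giving $\sqrt{\Delta}\,L(\varepsilon_\Delta,1)=h(\Delta)\log u_\Delta$. The only divergence is that for $\Delta=1$ the paper simply cites \cite[Theorem 1.6]{DY}, whereas you redo the constant-term bookkeeping; your account of the cancellation forced by the normalization $C=\tfrac{\log 4\pi+\gamma}{2}$ is consistent with that reference.
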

From above two theorems, we can reduce the proof of the second identity in   Theorem \ref{Main} to the  comparison between  the Fourier coefficients of $\emph{I}_{\Delta, r}(\tau, \log \parallel \Delta_{N} \parallel)$ and intersection number $< \widehat{\mathcal Z}_{\Delta, r}(n, \mu, v),\widehat{\omega}_N  >$. We will do  it in the Section \ref{secmainresult}.

Finally, we obtain the following modularity result.
\begin{theorem}\label{modular}
$\widehat{\phi}_{\Delta, r}(\tau)$ is a vector valued modular form for $\Gamma'$ of weight $\frac{3}2$, valued in $\C[L^\sharp/L]\otimes \widehat{\CH}^1_\R(\mathcal X_0(N))$.
\end{theorem}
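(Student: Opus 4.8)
The plan is to prove modularity by reducing the statement about the arithmetic theta function $\widehat{\phi}_{\Delta,r}(\tau)$, valued in the arithmetic Chow group, to scalar-valued assertions that can each be controlled by a classical modularity theorem. The key structural fact I would exploit is that to establish that an element of $\C[L^\sharp/L]\otimes\widehat{\CH}^1_\R(\mathcal X_0(N))[[q,q^{-1}]]$ is a modular form of weight $3/2$ and type $\rho_L$, it suffices to check that its image under a spanning set of $\R$-linear functionals on $\widehat{\CH}^1_\R(\mathcal X_0(N))$ is modular. This is because modularity of a $\C[L^\sharp/L]$-valued $q$-series is a linear condition on the Fourier coefficients (transformation under the generators $T$ and $S$ of $\Gamma'$ via $\rho_L$), and a collection of vectors transforms correctly if and only if all of its coordinates under a complete set of functionals do.

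First I would recall the structure of the arithmetic Chow group $\widehat{\CH}^1_\R(\mathcal X_0(N))$ and identify enough functionals to separate points and control modularity. The natural candidates are: the arithmetic degree paired against $\widehat{\omega}_N$, i.e. the functional $\langle\,\cdot\,,\widehat{\omega}_N\rangle$; the ``geometric'' functional recording the finite part (the classes $\mathcal Z_{\Delta,r}(n,\mu)$ in $\CH^1$, or equivalently intersection with vertical/horizontal cycles); and the archimedean functional extracting the Green-function data. For the Hodge-bundle pairing, Theorem~\ref{Main} already supplies the answer: $\langle\widehat{\phi}_{\Delta,r}(\tau),\widehat{\omega}_N\rangle$ equals a scalar multiple of $\mathcal E_L(\tau,1)$, which is a (vector-valued) modular form of weight $3/2$ and type $\rho_L$ by construction of $\mathcal E_L$. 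Thus this functional is automatically modular.

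Next I would handle the generic-fiber (degree/geometric) part and the archimedean part separately. For the generic fiber, the holomorphic generating function $\sum_{n,\mu} Z_{\Delta,r}(n,\mu)q^n e_\mu$ of twisted Heegner divisors on $X_0(N)$ is modular of weight $3/2$; this is exactly the twisted Gross--Kohnen--Zagier / Borcherds-type modularity statement, which in the present geometric setting follows from the theta-lift machinery of Section~\ref{sectheta} (the Kudla--Millson kernel $\Theta_{\Delta,r}(\tau,z)$ is itself modular of weight $3/2$ in $\tau$, so pairing it against any fixed cohomology class or integrating against any automorphic function produces a modular form in $\tau$). For the archimedean component, I would show that the generating series of the twisted Green functions $\Xi_{\Delta,r}(n,\mu,v)$ assembles into the Kudla--Millson theta kernel up to an exact form, and hence is modular of weight $3/2$ and type $\rho_L$ by the modularity of $\Theta_{\Delta,r}(\tau,z)$ in $\tau$; the smoothness of $\Xi_{\Delta,r}$ at the cusps established earlier ensures these Green currents glue correctly and the $n\le 0$ terms contribute the expected non-holomorphic completion.

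The main obstacle I expect is \emph{not} the three functional computations individually but the verification that together they are enough, and that the arithmetic compatibility holds at the level of the full arithmetic class rather than only after pairing. Concretely, the delicate point is controlling the vertical components of $\mathcal Z_{\Delta,r}(n,\mu)$ at primes dividing $N$ and the possible discrepancy between the Zariski closure and the ``correct'' arithmetic cycle: one must verify that the difference is a linear combination of vertical cycles whose generating function is itself modular, so that modularity of the pairing against $\widehat{\omega}_N$ together with generic-fiber modularity upgrades to modularity in $\widehat{\CH}^1_\R$. I would address this by invoking the injectivity of the map from $\widehat{\CH}^1_\R(\mathcal X_0(N))$ into the product of the generic Chow group, the archimedean Green data, and the $\widehat{\omega}_N$-pairing modulo the span of vertical cycles, and then checking that the vertical contributions are constant multiples (independent of the archimedean parameter $\tau$) so that they cannot disrupt the established weight-$3/2$ transformation law. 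Assuming this linear-algebra reduction, the modularity of $\widehat{\phi}_{\Delta,r}(\tau)$ follows from the modularity of $\mathcal E_L(\tau,1)$, of the Kudla--Millson kernel, and of the classical twisted Heegner generating series.
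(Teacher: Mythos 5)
Your overall strategy --- decompose $\widehat{\CH}^1_\R(\mathcal X_0(N))$ into pieces detected by suitable functionals and verify modularity of each projection --- is the same as the paper's, which uses the Kudla--Rapoport--Yang decomposition $\widehat{\CH}^1_\R(\mathcal X)=\widetilde{\MW}\oplus(\R\widehat\Delta_{\GS}+\sum_{p\mid N}\R\mathcal Y_p^\vee+\R a(1))\oplus a(A^0(X))$, Bruinier--Ono for the Mordell--Weil component, and a pairing with $\widehat\Delta_{\GS}$ (rather than $\widehat\omega_N$) for the $a(1)$ component. However, two of your steps have genuine gaps. First, the vertical components at $p\mid N$: you propose to show that the coefficients along the vertical cycles are ``constant multiples independent of the archimedean parameter'' and conclude they ``cannot disrupt'' the transformation law. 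That is not sufficient: if the coefficient of $q^ne_\mu$ along $\mathcal Y_p^\vee$ is a constant $c_{n,\mu}$, modularity of $\widehat\phi_{\Delta,r}$ still requires the scalar series $\sum c_{n,\mu}q^ne_\mu$ to be a weight-$3/2$ form of type $\rho_L$, which constancy in $v$ does not give. The paper proves these coefficients vanish identically (Proposition \ref{vertical}), using the Atkin--Lehner symmetry $w_N^*\widehat\phi_{\Delta,r}=\widehat\phi_{\Delta,r}$ (valid because $\chi_\Delta(-w)=\chi_\Delta(w)$), the relation $\mathcal X_p=(0,\log p^2)$ in the arithmetic Chow group, and the vanishing $I_{\Delta,r}(\tau,1)=0$ of Corollary \ref{thetaintegral}; some such argument is indispensable.

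Second, the archimedean component. The generating series of the twisted Kudla Green functions is \emph{not} the Kudla--Millson kernel up to an exact form; what is true is the current equation $dd^c[\Xi_{\Delta,r}(n,\mu,v)]+\delta_{Z_{\Delta,r}(n,\mu)}=[\omega_{\Delta,r}(n,\mu,v)]$. Passing from modularity of $dd^c$ of a function to modularity of the function itself requires inverting the Laplacian, and this is exactly where the paper's spectral argument enters: one expands the smooth part $\phi_{SM}$ in an orthonormal eigenbasis $\{f_j\}$ of $A^0(X)$, integrates by parts to obtain $-\lambda_j\langle\phi_{SM},f_j\rangle=\int_{X_0(N)}\Theta_{\Delta,r}(\tau,z)f_j$, and deduces modularity coefficient by coefficient from modularity of the theta kernel. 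Moreover $dd^c$ kills the constant-function direction $a(1)$, so that component must be pinned down separately by a height pairing (the paper uses $\langle\widehat\phi_{\Delta,r},\widehat\Delta_{\GS}\rangle$; your pairing $\langle\cdot,\widehat\omega_N\rangle$ via Theorem \ref{Main} could serve the same purpose, but you do not isolate this step). Note also that whether the Green-function generating series itself, for fixed $z$, is modular in $\tau$ is precisely the delicate question the paper attributes to Ehlen--Sankaran, so your assertion cannot be taken as known. With these two repairs your outline matches the paper's proof.
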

The case  $\Delta=1$ is proved  in \cite{DY}.
In  \cite[Section 6]{BruOno}, Bruinier and Ono proved that
 $$A_{\Delta, r}(\tau)=\sum_{\mu \in L^{\sharp}/ L}\sum_{n>0}Z_{\Delta, r}(n, \mu)q_{\tau}^ne_{\mu}$$ is a cusp  form valued in $S_{\frac{3}{2}, \rho_L}\bigotimes J(\Q(\sqrt{\Delta}))$, where $J$ is the Jacobian of $X_{0}(N)$.
Notice  $A_{\Delta, r}(\tau)$ is the generic component of $\widehat{\phi}_{\Delta, r}(\tau)$. So modularity of $\widehat{\phi}_{\Delta, r}(\tau)$ (Theorem \ref{modular}) is an integral version of their result.

This paper is organized as follows.  We will introduce some notations and introduce the twisted Kudla-Millson theta function in  Section \ref{preliminaries}.  We will introduce the twisted theta lifting and use it to prove the Theorem \ref{eisensteinlift} and \ref{theoremlift} in Section \ref{sectheta}. In  Section \ref{sect:KudlaGreenFunction}, we  define twisted Kudla's Green functions, and show these functions are smooth at cusps. In  Section \ref{secmainresult}, we will define the arithmetic theta functions and prove the main result Theorem \ref{Main}. In the last section, we obtain the modularity Theorem \ref{modular}.

\textbf{Acknowledgments.} Add later.

\part{Theta lifting  and Kronecker limit formula}

\section{Basic set-up and theta lifting } \label{preliminaries}

 Let $V$ be the quadratic space and let $L$ be the even integral lattice defined in the introduction.
 Then $\SL_2\cong \Spin(V)$ acts on $V$  by conjugation, i.e.,  $g.w=gwg^{-1}$. Notice that $\Gamma_0(N)$  acts on $L^\sharp/L$ trivially.
Let  $\D$ be the Hermitian domain of  positive real lines in $V_\R$:
$$
\D=\{z \subset V_{\R}; \dim z=1 ~and~(~,~) \mid_{z} >0\}.
$$
The isomorphism between  $\H$ and  $\mathbb D$ is given by  the
map
\begin{equation}  \label{eq:wz}
z=x+i y  \mapsto  w(z) = \frac{1}{\sqrt{N} y} \left(
  \begin{array}{cc}
  -x  & z\overline{z}\\
     -1&x\\
  \end{array}
\right).
\end{equation}
The inverse  is
\begin{equation}\label{inverse}
w=\kzxz {a} {b} {c} {-a} \mapsto z(w) =\frac{2aN + \sqrt D}{2cN},  \quad D= - 4N Q(w).
\end{equation}
This isomorphism is $\SL_2(\R)$-action compatible and induces an
isomorphism between
$Y_0(N) =\Gamma_0(N) \backslash \H$ and $\Gamma_0(N) \backslash \mathbb D$.
Let  $X_0(N)$ be the usual compactification of $Y_0(N)$.

For $w \in V(\Q)$ with $Q(w)>0$,  the Heegner points $Z(w)$ is the image of $z(w)$ in $X_0(N)$; when $Q(w)\leq 0$, set $z(w)=0$.

\subsection{The Weil representation}

Let $\Mp_{2, \R}$ be the metaplectic double cover of $\SL_2(\R)$, which can be  realized  as pairs $(g, \phi(g, \tau))$, where $g=\kzxz{a}{b}{c}{d} \in \SL_2(\R)$, $\phi(g,
\tau)$ is a holomorphic function of $\tau \in \H$ such that $\phi(g, \tau)^2 = j(g, \tau) = c\tau +d$. Let $\Gamma'$ be the preimage of
$\SL_2(\Z)$ in $\Mp_{2, \R}$ with two generators
$$
S=\left( \kzxz {0} {-1} {1} {0}, \sqrt \tau \right)  \quad  T= \left( \kzxz {1} {1} {0} {1} , 1 \right).
$$
We denote the standard basis of $\C[L^\sharp/L]$ by $\{ e_\mu=L_\mu:\, \mu \in L^\sharp/L\}$.  Then there is a Weil representation $\rho_L$ of $\Gamma'$ on $\C[L^\sharp/L]$ given by
\cite{Bo}
\begin{eqnarray} \label{eq:WeilRepresentation}
&\rho_{L} (T)e_{\mu}&=e(Q(\mu))e_{\mu} ,\\
&\rho_{L}(S) e_{\mu}&= \frac{e(\frac{1}8)}{\sqrt{\vert L^{\sharp} / L \vert}}\sum \limits_{\mu^{\prime} \in L^{\sharp} / L }e(-(\mu,
\mu^{\prime}))e_{\mu^{\prime}}\nonumber .
\end{eqnarray}
This Weil representation $\rho_L$ is closed related to the Weil representation
$\omega$ of $\Mp_{2, \A}$ on $S(V_\A)$( see \cite{BHY}).

The slash operator with weight $\frac{3}{2}$ is defined on functions $f: \H\rightarrow \C[L^{\sharp}/L]$ given by \begin{equation}(f\mid _{\frac{3}{2}, \rho_{L}} \gamma^{\prime})(\tau)= \phi(\tau)^{-3}\rho_{L}^{-1}(\gamma^{\prime})f(\gamma \tau),\end{equation}
where $\gamma^{\prime}=(\gamma, \phi) \in \Gamma^{\prime}.$

\subsection{Twisted Heegner divisors} \label{sect:TwistedDivisor}

 Let $\Delta \in \Z_{>0}$ be a fundamental  discriminant which is a square modulo $4N$, and let $L^\Delta= \Delta L$ with renormalized quadratic form $Q_\Delta(w) = \frac{Q(w)}{\Delta}$. Then
it is easy to check $L^{\Delta, \sharp} = L^\sharp$. Let $\Gamma_\Delta$ be the subgroup of $\Gamma_0(N)$ which acts on $L^{\Delta, \sharp}/L^{\Delta}$ trivially. It is not hard to check that the map
\begin{equation}
\chi_{\Delta}(\kzxz{\frac{b}{2N} }{\frac{-a}{N}}{c}{-\frac{b}{2N}}) =\begin{cases}
  (\frac{\Delta}{n}),  &\ff \Delta \mid b^2-4Nac~and~ \frac{b^2-4Nac}{\Delta}~is~ a\\
  &  ~square ~modulo~4N~and~(a, b, c, \Delta)=1,
  \\
  0,  &otherwise.
 \end{cases}
\end{equation}
gives  a well-defined map
$$
\chi_\Delta:  L^{\Delta, \sharp}/L^\Delta  \rightarrow \{ \pm 1 \}.
$$
Here $n$ is any integer prime to $\Delta$ represented by one of the quadratic form $[N_1a, b, N_2c]$ with $N_1N_2=N$ and $N_1$, $N_2>0$, and $[a, b, Nc]=ax^2 + b xy + Nc y^2$ is the integral binary quadratic form corresponding to $w=\kzxz{\frac{b}{2N} }{\frac{-a}{N}}{c}{-\frac{b}{2N}}$.  Indeed,  $\chi_\Delta(w) = \chi_\Delta ([a, b, Nc])$ is the generalized genus character defined in \cite[Section 1]{GKZ} (see also \cite[Section 4]{BruOno}). We leave it to readers to check that $\chi(w +L^\Delta) = \chi(w)$ and so $\chi_\Delta$ induces a map on $L^{\Delta, \sharp}/L^\Delta $.
 It is known \cite{GKZ} that  the map is invariant under the action of $\Gamma_{0}(N)$ and the action of all Atkin-Lehner involutions, i,e.,
\begin{equation}
\chi_{\Delta}(\gamma w\gamma^{-1})=\chi_{\Delta}(w)~ and ~\chi_{\Delta}(W_{M}wW_{M}^{-1})=\chi_{\Delta}(w),
\end{equation}
where $\gamma \in \Gamma_{0}(N)$ and $W_{M}$ is the Atkin-Lehner involution with $M \mid N$.

Fix a $r \mod 2N$ class with $r^2 \equiv \Delta \mod 4N$.
For any $\mu\in L^{\sharp}/L$ and a positive rational number $n \in Q(\mu) +\Z$, we define the twisted Heegner divisor  by
\begin{equation}\label{TwistedHeegner}
Z_{\Delta, r}(n, \mu):=\sum_{w \in \Gamma_0(N) \setminus L_{r\mu}[n \Delta ]}\chi_{\Delta}(w)Z(w) \in \Div(X_{0}(N))_{\Q},
\end{equation}
which is defined over $\Q(\sqrt{\Delta})$. Notice that we count each  point $Z(w)=\R w$  with  multiplicity $\frac{2}{|{\Gamma}_w|}$ in the orbifold $X_0(N)$, where $\Gamma_w$ is the stabilizer of $w$ in $\Gamma_0(N)$. So our definition is the same as that in  \cite[Section 5]{AE} and \cite[Section 5]{BruOno}.
\begin{comment}
 \begin{remark}
 In the work \cite[Section 5]{AE} and \cite[Section 5]{BruOno}, the author gave another  definition:
\begin{equation}
Z_{\Delta, r}(n, \mu):=\sum_{w \in \Gamma \setminus L_{r\mu}[\mid \Delta\mid n]}\frac{\chi_{\Delta}(w)}{\mid\overline{\Gamma_{w}}\mid}z(w),
\end{equation}
where $\Gamma_{w}$ is the stabilizer of $w$ in $\Gamma_{0}(N)$ ,  $\overline{\Gamma_{w}}= \Gamma_{w}/ \{\pm 1\}$ and $z(w) \in \H$ is defined in equation (\ref{inverse}). Since the multiplicity  of $Z(w)$ is $\mid\overline{\Gamma_{w}}\mid$, this definitions is the same as ours in this paper.
\end{remark}
\end{comment}

Now define
\begin{equation}
Z_{\Delta}(n, \delta):=\sum_{w \in \Gamma_{\Delta} \setminus L^{\Delta}_\delta[ n]}Z(w) \in \Div(X_{\Gamma_{\Delta}}).
\end{equation}

Recall the natural map $$\pi_{\Gamma_{\Delta}}: X_{\Gamma_{\Delta}}\longrightarrow X_{0}(N)$$
is a covering map with the degree
$[\overline{\Gamma_{0}(N)}: \overline{\Gamma_{\Delta}}]$. Here for any congruence group, $\bar{\Gamma}=\Gamma/(\Gamma\cap \{\pm 1\})$.
%Consider the commutative diagram
%$$\xymatrix{
%\H^\ast \ar[r]^{}^{\id} \ar[d]^{}^{\varphi}&  \H^{\ast} \ar[d]^{}^{\psi} \\
%                         X_{\Gamma_{\Delta}} \ar[r]^{}^{p} & X_{0}(N)  }$$
%For $z \in \H^*$, let $t=\psi(z)$. Denote $p^{-1}(t)=\{t_{1},...,t_{g}\}$ and %choose $z_{i}\in \H^*$, such that $\varphi(z_i)=t_i$.

%If $\Gamma_{\Delta}$ is a normal subgroup with no elliptic element, then the %ramification index of $t_i$ is $e_{i}=\mid \overline{\Gamma_{z_i}} \mid$  and %$[\overline{\Gamma_{0}(N)}: \overline{\Gamma_{\Delta}}]=e_{i}g$. Here all the $e_i$ %are the same.

%{\bf delete this. We defined  $\Gamma_\Delta$ above and don't need this. We could choose $\Gamma_{\Delta}=\Gamma(N \Delta)$, which is a normal subgroup and with no elliptic element if $N \Delta>1 $.}

\begin{lemma}\label{pullbackdivisor}
Let  $n\equiv Q(\mu) (\mod \Z)$ be a positive number, then
\begin{equation}
\sum_{\delta \in L^{\Delta,  \sharp}/L^{\Delta}, \delta\equiv r \mu( L)  }\chi_{\Delta}(\delta) Z_{\Delta}(n, \delta)= \pi_{\Gamma_{\Delta}}^{\ast}(Z_{\Delta, r}(n, \mu)),
\end{equation}
where $\pi_{\Gamma_{\Delta}}^{\ast}$ is the pullback
$$\pi_{\Gamma_{\Delta}}^{\ast} : Z^1(X_{0}(N)) \longrightarrow Z^1(X_{\Gamma_{\Delta}}) .$$
\end{lemma}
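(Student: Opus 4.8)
The plan is to prove the pullback formula by unwinding both sides into sums over points of the covering curve $X_{\Gamma_\Delta}$ and matching them orbit-by-orbit. Recall that $\pi_{\Gamma_\Delta}^\ast$ applied to a point $Z(w) \in \Div(X_0(N))$ is, by definition of pullback under a finite covering, the sum of the points of $X_{\Gamma_\Delta}$ lying above $Z(w)$, counted with the appropriate ramification/orbifold multiplicities. So first I would expand the right-hand side: each Heegner point $Z(w)$ on $X_0(N)$ (for $w \in \Gamma_0(N)\backslash L_{r\mu}[n\Delta]$) pulls back to the union of $\Gamma_\Delta$-orbits inside the single $\Gamma_0(N)$-orbit of $w$, and the genus character $\chi_\Delta(w)$ is constant along the $\Gamma_0(N)$-orbit by the invariance $\chi_\Delta(\gamma w \gamma^{-1}) = \chi_\Delta(w)$ recorded earlier.

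The key bookkeeping step is the comparison of lattices and their cosets. Since $L^\Delta = \Delta L$ with $Q_\Delta = Q/\Delta$, a vector $w$ has $Q(w) = n\Delta$ in $L$ if and only if the corresponding vector (via the scaling identification) has $Q_\Delta$-norm $n$ in $L^\Delta$, so $L_{r\mu}[n\Delta]$ matches up with $L^\Delta_\delta[n]$ once the coset conditions are reconciled. Here is where the index set $\{\delta \in L^{\Delta,\sharp}/L^\Delta : \delta \equiv r\mu \ (L)\}$ on the left-hand side enters: the $\Gamma_0(N)$-orbit of $w$ in $L_{r\mu}$ breaks up, after passing to the finer equivalence $\Gamma_\Delta$ and the finer lattice $L^\Delta$, into the pieces indexed by the cosets $\delta \in L^{\Delta,\sharp}/L^\Delta$ reducing to $r\mu$ modulo $L$. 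I would verify carefully that summing $Z_\Delta(n,\delta)$ over exactly these $\delta$, weighted by $\chi_\Delta(\delta)$, reproduces the full fiber of $\pi_{\Gamma_\Delta}$ over each $Z_{\Delta,r}(n,\mu)$-point, with matching multiplicities.

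The multiplicity matching is the point requiring the most care. On $X_0(N)$ we count $Z(w)$ with weight $2/|\Gamma_w|$, whereas on $X_{\Gamma_\Delta}$ the analogous point $Z(w)$ in $Z_\Delta(n,\delta)$ carries its own stabilizer weight relative to $\Gamma_\Delta$. The covering $\pi_{\Gamma_\Delta}$ has degree $[\overline{\Gamma_0(N)} : \overline{\Gamma_\Delta}]$, and for a point $w$ the number of preimages times their local multiplicities must reassemble to this degree times the orbifold weight downstairs; this is precisely the orbit-counting identity $[\overline{\Gamma_0(N)} : \overline{\Gamma_\Delta}] = \sum |\overline{\Gamma_0(N)}_w \backslash \overline{\Gamma_0(N)} / \overline{\Gamma_\Delta}|$-type decomposition, and the stabilizer weights $2/|\Gamma_w|$ upstairs and downstairs differ exactly by the local ramification factor so that pullback is weight-preserving. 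I expect the main obstacle to be tracking these orbifold stabilizer factors consistently, especially at the elliptic fixed points of $\Gamma_0(N)$ where $|\Gamma_w| > 2$; everywhere else the weights are generically $1$ and the identity is a clean orbit decomposition.

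Finally, I would check that $\chi_\Delta$ is compatible with passing to $L^\Delta$-cosets: the claim $\chi_\Delta(w + L^\Delta) = \chi_\Delta(w)$ stated in Section \ref{sect:TwistedDivisor} guarantees that $\chi_\Delta(\delta)$ is well-defined on $L^{\Delta,\sharp}/L^\Delta$, so that the weights attached to each orbit on the left-hand side agree with the constant value $\chi_\Delta(w)$ attached to the downstairs point on the right. Assembling the orbit decomposition, the lattice/coset matching, and the multiplicity count then yields the stated equality of divisors in $Z^1(X_{\Gamma_\Delta})$.
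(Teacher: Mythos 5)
Your proposal follows essentially the same route as the paper's proof: unfold $\pi_{\Gamma_\Delta}^{\ast}Z(w)$ as the sum of the preimages of $Z(w)$ in $X_{\Gamma_\Delta}$, then regroup the sum over $\Gamma_0(N)\backslash L_{r\mu}[n\Delta]$ as a double sum over the cosets $\delta\in L^{\Delta,\sharp}/L^{\Delta}$ with $\delta\equiv r\mu\ (L)$ and over $\Gamma_\Delta\backslash L^{\Delta}_{\delta}[n]$, using the $\Gamma_0(N)$-invariance and $L^{\Delta}$-coset-invariance of $\chi_\Delta$ together with the orbifold multiplicity convention. One cosmetic point: no ``scaling identification'' of vectors is needed, since $L^{\Delta}=\Delta L\subset L$ consists of the same ambient vectors equipped with the rescaled form $Q_\Delta=Q/\Delta$, so $L_{r\mu}[n\Delta]$ is literally the disjoint union of the sets $L^{\Delta}_{\delta}[n]$ over those $\delta$.
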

\begin{proof} Write $\Gamma =\Gamma_0(N)$, and notice $L^\sharp =L^{\Delta, \sharp}$.
 For $w \in \Gamma \setminus L_{r\mu}[\Delta n]$, $\pi_{\Gamma_{\Delta}}^{-1}(Z(w))=\{Z(w_{1}), ... ,Z(w_{g})\}$, then $\pi_{\Gamma_{\Delta}}^{\ast}(Z(w))=Z(w_{1})+...+Z(w_{g})$.

Thus one has
\begin{eqnarray}
\pi_{\Gamma_{\Delta}}^{\ast}(Z_{\Delta, r}(n, \mu))
&=&\sum_{w \in \Gamma \setminus L_{r\mu}[ \Delta n]}\chi_{\Delta}(w)\pi_{\Gamma_{\Delta}}^{\ast}(Z(w))\nonumber\\
&=&\sum_{\delta \in L^{\sharp}/L^{\Delta}, \delta\equiv r \mu (L) }\chi_{\Delta}(\delta)\sum_{w \in \Gamma_{\Delta} \setminus L^{\Delta}_{\delta}[ n]}Z(w)\nonumber\\
&=&\sum_{\delta \in L^{\sharp}/L^{\Delta}, \delta\equiv r \mu(L)  }\chi_{\Delta}(\delta) Z_{\Delta}(n, \delta)
\end{eqnarray}

Then we obtain the result.

\end{proof}

\begin{comment}
 $\Gamma_{0}(N)$ acts trivially on discriminant group $L^\sharp/L$.
We could take some congruence subgroup $\Gamma_{\Delta}\subseteq \Gamma_{0}(N)$, which acts trivially on $L^\sharp /L^{\Delta}$.
\end{comment}
\subsection{Twisted Kudla-Millson theta function}
Following Kudla and Millson  (\cite{KMi}, \cite[Section 3]{BF1}),
 for  $z=x+iy \in \H$, there is a decomposition
$$
V_\R =\R w(z) \oplus \R w(z)^\perp, \quad w = w_z + w_{z^\perp}.
$$
Define
$R(w, z)_{\Delta} =-(w_{z^\perp}, w_{z^\perp})_{\Delta}$, and the majorant
$$
(w, w)_{\Delta, z}= (w_z, w_z)_{\Delta} + R(w, z)_{\Delta},
$$
where  $(,)_{\Delta}=\frac{(~,~)}{ \Delta}$ is the bilinear form associated to the quadratic form $Q_\Delta$.
One has
\begin{align} \label{eq3.5}
R(w, z)_{\Delta}&=\frac{1}{2}(w, w(z))_{\Delta}^{2}-(w, w)_{\Delta}.
\end{align}
For  $w =\kzxz
    {w_{1}}{w_{2}}
     {w_{3}}{-w_{1}}\in V_{\R},$
\begin{equation} \label{formula1}
(w, w(z))_{\Delta}=-\frac{\sqrt{N}}{y\sqrt{ \Delta}}(w_3z\overline{z}-w_1(z+\overline{z})-w_2).
\end{equation}

\begin{comment}
Let $\mu(z) = \frac{dx dy}{y^2}$ be the Poincare measure on $\H$.  For $\tau= u + iv \in \H$, let
$ g_{\tau}^{'} =(n(u) m(\sqrt v), 1) \in \Mp_{2, \R}$, where
$$
n(u) =\kzxz {1} {u} {0} {1}, \quad m(a) =\kzxz {a} {0} {0} {a^{-1}}.
$$
\end{comment}
Let
\begin{align} \label{eq3.5}
\varphi^{0}_{\Delta}(w, z)&=\bigg((w, w(z))_{\Delta}^{2}- \frac{1}{2\pi}\bigg)e^{-2\pi R(w, z)_{\Delta}}\mu(z) \notag
\\
\quad \hbox{ and } \varphi_{\Delta}(w, \tau, z) &= e(Q_{\Delta}(w)\tau) \varphi^0_{\Delta}(\sqrt v w, z) ,
\end{align}
be the  differential forms on $V_{\R}$ valued in $\Omega^{1,1}(\D)$, where $\mu(z) =\frac{dx \, dy}{y^2}$.

For any $\delta \in L^{\Delta, \sharp}/L^\Delta$, define
\begin{align}
\Theta_\delta(\tau, z) &=\sum_{w \in  L^{\Delta}_\delta}  \varphi_{\Delta}( w, \tau, z) ,
\end{align}
where $L^{\Delta}_\delta=L^{\Delta}+\delta$.
Then
\begin{equation} \label{eq:ThetaCoefficient}
\Theta_{L^\Delta}(\tau, z) =   \sum_{\delta \in L^\sharp/L^{\Delta} }  \Theta_\delta(\tau, z) e_\delta
\end{equation}
is a vector valued Kudla-Millson theta function, which is a nonholomorphic modular form of weight $3/2$  of $(\Gamma',  \rho_{L^\Delta })$ with respect to the variable $\tau$ with values in $ \Omega^{1, 1}(X_{\Gamma_{ \Delta}}) $, where $X_{\Gamma_{\Delta}} $ is the modular curve $\Gamma_{\Delta} \backslash \H^{\ast} $.

Fix a class $r \mod 2N$ with $r^2 \equiv \Delta \mod 4N$. Following the Bruinier and Ono's work \cite{BruOno},  Alfes and Ehlen constructed a $\C[L^\sharp/L]-$ valued theta function \cite[Section 4]{AE}
\begin{equation} \label{eq:TwistedTheta}
\Theta_{\Delta, r}(\tau, z):=\sum_{\mu}\Theta_{\Delta, r, \mu}(\tau, z)e_{\mu},
\end{equation}
where $$\Theta_{\Delta, r, \mu}(\tau, z)=\sum_{\delta \in L^{\sharp}/L^{\Delta},\ \delta\equiv r \mu( L), \atop Q_{\Delta}(\delta)\equiv Q(\mu)(\mod 1)}\chi_{\Delta}(\delta)\Theta_\delta(\tau, z) .$$

This twisted theta function has a good transformation properties just like the classical Kudla-Millson theta functions.
\begin{proposition}\cite[Proposition 4.1]{AE}
The theta function $\Theta_{\Delta, r}(\tau, z)$ is a non-holomorphic $\C[L^\sharp/L]$-valued modular form of weight $3/2$ for the representation
$\rho_L$. Furthermore, it is a non-holomorphic automorphic form of weight $0$ for $\Gamma_{0}(N)$ in the variable $z \in \D$.
\end{proposition}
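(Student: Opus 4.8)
The plan is to deduce both transformation laws from the corresponding properties of the untwisted Kudla--Millson theta function $\Theta_{L^\Delta}(\tau,z)$, which is already recorded (just before \eqref{eq:TwistedTheta}) to be a nonholomorphic modular form of weight $3/2$ for $(\Gamma',\rho_{L^\Delta})$ in $\tau$ and to be $\Gamma_\Delta$-invariant of weight $0$ in $z$. The twisting $\delta\mapsto\mu$ is a purely linear, $\tau$- and $z$-independent operation on the coefficient space $\C[L^{\Delta,\sharp}/L^\Delta]=\C[L^\sharp/L^\Delta]$, so the weights $3/2$ (in $\tau$) and $0$ (in $z$) are automatically inherited; what must be checked is that the relevant automorphy group and representation change correctly.

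For the $z$-variable (weight $0$ for $\Gamma_0(N)$) I would fix $\gamma\in\Gamma_0(N)$ and use that, since $\gamma\in\SL_2(\Z)$ preserves $L$, conjugation $w\mapsto\gamma w\gamma^{-1}$ preserves $L^\Delta=\Delta L$ and its dual $L^{\Delta,\sharp}=L^\sharp$, hence permutes the cosets $\delta\in L^\sharp/L^\Delta$. Using the $\SL_2(\R)$-equivariance of the Kudla--Millson form, $\varphi_\Delta(w,\tau,\gamma z)=\varphi_\Delta(\gamma^{-1}w\gamma,\tau,z)$, one gets $\Theta_\delta(\tau,\gamma z)=\Theta_{\gamma^{-1}\delta\gamma}(\tau,z)$, and after reindexing the twisted sum $\Theta_{\Delta,r,\mu}$ the invariance follows from three facts: (i) $Q_\Delta$ is conjugation-invariant, so the condition $Q_\Delta(\delta)\equiv Q(\mu)\pmod 1$ is stable; (ii) $\Gamma_0(N)$ acts trivially on $L^\sharp/L$, so $\gamma\delta\gamma^{-1}\equiv\delta\pmod L$ and the condition $\delta\equiv r\mu\pmod L$ is stable; and (iii) the genus character satisfies $\chi_\Delta(\gamma\delta\gamma^{-1})=\chi_\Delta(\delta)$, as recorded earlier. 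Together these upgrade the mere $\Gamma_\Delta$-invariance of the untwisted kernel to full $\Gamma_0(N)$-invariance for the twisted one.

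For the $\tau$-variable I would encode the twisting as a linear map $\psi_{\Delta,r}\colon\C[L^{\Delta,\sharp}/L^\Delta]\to\C[L^\sharp/L]$ sending $e_\delta\mapsto\chi_\Delta(\delta)e_\mu$ whenever $\delta\equiv r\mu\pmod L$ and $Q_\Delta(\delta)\equiv Q(\mu)\pmod 1$, so that $\Theta_{\Delta,r}=\psi_{\Delta,r}\circ\Theta_{L^\Delta}$ in the coefficient variable. The claim then reduces to the intertwining identity $\psi_{\Delta,r}\circ\rho_{L^\Delta}(\gamma')=\rho_L(\gamma')\circ\psi_{\Delta,r}$, which it suffices to verify on the generators $T$ and $S$ of $\Gamma'$. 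For $T$ this is immediate, since $\rho_{L^\Delta}(T)e_\delta=e(Q_\Delta(\delta))e_\delta$ and the defining congruence $Q_\Delta(\delta)\equiv Q(\mu)$ makes this agree with $\rho_L(T)e_\mu=e(Q(\mu))e_\mu$. The real work is the $S$-transformation: $\rho_{L^\Delta}(S)$ is a normalized Gauss sum (finite Fourier transform) over the larger group $L^\sharp/\Delta L$, and one must show that weighting by $\chi_\Delta$ collapses it to the Fourier transform $\rho_L(S)$ over $L^\sharp/L$, with the output representation being $\rho_L$ itself (and not its dual) precisely because $\Delta>0$. This $S$-step is the main obstacle; I would dispatch it by the genus-character Gauss-sum evaluation of \cite{GKZ} together with \cite{BruOno, AE}, after which the remaining assembly is formal.
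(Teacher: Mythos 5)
The paper offers no proof of this proposition --- it is quoted verbatim from \cite[Proposition 4.1]{AE} --- and your outline correctly reconstructs the argument given there and in \cite[Sections 4--5]{BruOno}: $\Gamma_0(N)$-invariance in $z$ follows from the conjugation-invariance of $\chi_\Delta$ and $Q_\Delta$ together with the trivial action of $\Gamma_0(N)$ on $L^\sharp/L$, while modularity in $\tau$ reduces to the intertwining identity $\psi_{\Delta,r}\circ\rho_{L^\Delta}(\gamma')=\rho_L(\gamma')\circ\psi_{\Delta,r}$ checked on the generators $T$ and $S$. Deferring the $S$-generator Gauss-sum evaluation to \cite{GKZ} and \cite{BruOno} is the one non-formal step you do not carry out, but since that computation is precisely the content of the results the paper itself cites wholesale, your proposal matches the intended proof in both structure and substance.
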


\section{Twisted theta lift}\label{sectheta}
 Following  Alfes and Ehlen \cite{AE}, we consider the twisted theta lifting:  for any $\Gamma_{0}(N)$-invariant function  $f(z)$,  the lifting is given by
 \begin{equation}
 I_{\Delta, r}(\tau, f):=\int_{X_{0}(N)}f(z)\Theta_{\Delta, r}(\tau, z)=\sum_{\mu \in L^\sharp/L}\int_{X_{0}(N)}f(z)\Theta_{\Delta, r, \mu}(\tau, z)e_{\mu},
 \end{equation}
if the integral is convergent.

In this section, we consider the lift of Eisenstein series $\mathcal{E}(N, z, s)$ and Petersson norm $\log \| \Delta_{N} \|$. From \cite [ Proposition 4.1]{BF1}, one knows the theta function is $O( e^{-Cy^{2}})$ around cusps, ~as~$y\longrightarrow \infty$  for some constant $ C> 0$. Then these two lifts are convergent.

We first recall a result of Alfes and Ehlen.

 \begin{proposition}(\cite[Proposition 3.1]{Al}\label{thetakernal}, \cite{Eh})
 Let $K=\Z$ with the quadratic form $Q(x)=-Nx^2$  ($K^\sharp/K \cong L^\sharp/L$), then
 \begin{eqnarray}
 &&\Theta_{\Delta, r}(\tau, z)=-y\frac{N^{3/2}}{2 \Delta }\sum_{n=1}^{\infty}\sum_{\gamma \in \Gamma_{\infty} ^{\prime}\diagdown\Gamma^{\prime}}n^{2}(\frac{\Delta}{n}) \nonumber\\
 &&\times\bigg[exp(-\pi\frac{y^{2}Nn^{2}}{v \Delta})v^{-3/2}\sum_{\lambda \in K^{\sharp}}e( \Delta Q(\lambda)\overline{\tau}-2N\lambda nx )e_{r\lambda} \bigg]\vert_{3/2, \rho_{K}}\gamma dxdy.\nonumber
 \end{eqnarray}

 \end{proposition}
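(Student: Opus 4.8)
The plan is to prove the identity by the standard unfolding of the Kudla--Millson/Siegel theta function at the cusp $\infty$, following the lattice-splitting method of \cite{Bo}: I reduce the rank-$2$ theta kernel to data attached to a primitive isotropic line and to the rank-$1$ lattice $K$ with $Q(x)=-Nx^2$, and then repackage the residual modular dependence in $\tau$ as a Poincar\'e series. Concretely, I fix $e_\infty=\kzxz{0}{0}{1}{0}\in L$ spanning the isotropic line $\ell$ of the cusp $\infty$, together with $e_0=\kzxz{0}{-1/N}{0}{0}$, so that $(e_\infty,e_0)=1$. Using $(w,e_\infty)=a$ one checks that $\ell^\perp=\{a=0\}$ and that $\ell^\perp/\ell$ is spanned by the class of $\kzxz{b}{0}{0}{-b}$ with $Q(b)=-Nb^2$; this is exactly $K\cong\Z$, and $K^\sharp/K\cong L^\sharp/L$ as stated. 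I then Witt-decompose each $w\in L^\Delta_\delta$ as $w=a\,e_0+\lambda+c\,e_\infty$ with $\lambda$ in the $K$-direction, turning $\Theta_\delta$ into a triple sum over the coordinates $(a,b,c)$.

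\textbf{Poisson summation.} Substituting the decomposition into (\ref{formula1}) and into the definition of $R(w,z)_\Delta$, and combining $e(Q_\Delta(w)\tau)$ with the Gaussian, the summand takes the shape $\bigl(v(w,w(z))_\Delta^2-\tfrac{1}{2\pi}\bigr)\,e(Q_\Delta(w)\overline\tau)\,e^{-\pi v(w,w(z))_\Delta^2}\mu(z)$. Since $(w,w(z))_\Delta$ is linear in $a$ with the $z$-independent coefficient $-\sqrt{\Delta}/(y\sqrt N)$, the summand is a Gaussian in $a$ of width $\sim y\sqrt N/\sqrt{v\Delta}$, and I apply Poisson summation in $a=(w,e_\infty)$. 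The dual Gaussian produces precisely the factor $\exp\bigl(-\pi y^2 N n^2/(v\Delta)\bigr)\,v^{-3/2}$ together with the overall constant $-y\,N^{3/2}/(2\Delta)$, while the degree-two polynomial prefactor contributes, via the second moment of the Gaussian, the weight $n^2$. The $K$-coordinate $b$ survives as $\lambda\in K^\sharp$, carrying the holomorphic phase $e(\Delta Q(\lambda)\overline\tau)$ and the linear $x$-dependence $e(-2N\lambda n x)$ that comes from the $-2w_1x$ term paired with the Poisson shift, and it supplies the label $e_{r\lambda}$. Simultaneously I carry out the sum over $\delta\equiv r\mu\,(L)$ weighted by $\chi_\Delta(\delta)$; the crucial arithmetic input is that the restriction of the Gross--Kohnen--Zagier genus character to the isotropic direction collapses, after Poisson summation, to the Kronecker symbol $\bigl(\tfrac{\Delta}{n}\bigr)$.

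\textbf{Recognizing the Poincar\'e series.} After these reductions the remaining dependence on the isotropic coordinate $c$ (along $e_\infty$), together with the sign choices, reorganizes into a sum over coset representatives for $\Gamma'_\infty\backslash\Gamma'$. I verify this by showing that applying the slash operator to the bracketed seed $\exp\bigl(-\pi y^2 N n^2/(v\Delta)\bigr)v^{-3/2}\sum_{\lambda\in K^\sharp}e(\Delta Q(\lambda)\overline\tau-2N\lambda n x)e_{r\lambda}$ and summing against $\vert_{3/2,\rho_K}\gamma$ over these representatives reproduces exactly the full lattice sum; the weight $3/2$ and the representation $\rho_K$ on $\C[K^\sharp/K]\cong\C[L^\sharp/L]$ are matched using the explicit formulas (\ref{eq:WeilRepresentation}) for the action of $S$ and $T$ and the automorphy factor $\phi(\tau)^{-3}$. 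This yields the claimed formula for $\Theta_{\Delta,r}(\tau,z)$.

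\textbf{Main obstacle.} The step I expect to be hardest is the bookkeeping in the Poisson summation, and specifically the arithmetic identity converting $\sum_\delta\chi_\Delta(\delta)(\cdots)$ over the isotropic direction into the single factor $\bigl(\tfrac{\Delta}{n}\bigr)$ with the correct multiplicity $n^2$: this uses the genus-theoretic description of $\chi_\Delta$ from \cite{GKZ} in an essential way, and one must track the metaplectic phases and the interchange of the (conditionally convergent) summations so that the output is genuinely the Poincar\'e series over $\Gamma'_\infty\backslash\Gamma'$. Pinning down the exact normalization --- the constant $-y\,N^{3/2}/(2\Delta)$ and the powers of $v$ and $y$ --- is the remaining technical point, and there I would use the rapid decay $O(e^{-Cy^2})$ of the theta kernel near the cusp quoted from \cite[Proposition 4.1]{BF1} to justify convergence and the exchange of sums.
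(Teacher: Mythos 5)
The paper itself gives no proof of this proposition: it is imported verbatim from \cite[Proposition 3.1]{Al} (see also \cite{Eh} and \cite[Section 4]{AE}), so the only meaningful comparison is with the argument in those sources. Your architecture is exactly theirs: Witt-decompose $L^{\Delta}$ along the isotropic line of the cusp $\infty$ with $K\cong\Z$, $Q(x)=-Nx^2$ as the quotient lattice, apply Poisson summation in the coordinate $a=(w,e_\infty)$, and reassemble the surviving isotropic coordinate together with the Poisson-dual variable into the double sum over $n\ge 1$ and $\gamma\in\Gamma_\infty'\backslash\Gamma'$. This is Borcherds' reduction to a smaller lattice \cite{Bo} adapted to the Kudla--Millson kernel, and your identification of which variable is Gaussian and which data survive as the seed of the Poincar\'e series is correct.

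What you have written is, however, a road map rather than a proof: the two computations that actually produce the stated right-hand side are named as ``the main obstacle'' but not carried out. First, the Poisson summation must be performed on the lattice $L^{\Delta}=\Delta L$, i.e.\ over $a\in\Delta\Z+(\text{shift})$; this rescaling is precisely what converts the naive dual Gaussian into $\exp(-\pi y^2Nn^2/(v\Delta))$ and feeds the prefactor $-yN^{3/2}/(2\Delta)$, and your width count ``$\sim y\sqrt N/\sqrt{v\Delta}$'' silently presupposes it. Second, the factor $(\tfrac{\Delta}{n})$ does not arise by ``restricting'' $\chi_\Delta$ to the isotropic direction: it is the value of a twisted Gauss sum $\sum_{\delta}\chi_\Delta(\delta)\,e(\,\cdot\,)$ over the classes $\delta\in L^{\sharp}/L^{\Delta}$ supported on that direction, paired against the additive character introduced by Poisson summation; this is the intertwining property of the genus character from \cite[Section 4]{BruOno}, and it is also where the extra power of $\Delta^{1/2}$ in the normalization is absorbed. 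Until these two identities, together with the Fourier transform of the Hermite-weighted Gaussian that yields the $n^2$ (and the cancellation of the $-\tfrac{1}{2\pi}$ term special to the Kudla--Millson form), are written out, the constants in the proposition are asserted rather than verified. So: right method, consistent with the source the paper cites, but the decisive steps are still open in your write-up.
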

 Now we are ready to prove   Theorem \ref{eisensteinlift}  which we restate here for convenience. We follow the idea in the proof of \cite[Theorem 6.1]{AE}, where the case $N=1$ is considered.
 \begin{theorem}\label{theo:eisensteinlift}
 When $\Delta$ is a fundamental discriminant ,
 \begin{equation}\label{lifteisenstein}
 I_{\Delta, r}(\tau, \mathcal{E}(N, z, s))=\Delta^{\frac{s}{2}}\Lambda(\varepsilon_{\Delta},
 s)\mathcal{E}_{L}(\tau, s),
 \end{equation}
 where $\Lambda(\varepsilon_{\Delta}, s)=L(\varepsilon_{\Delta},s)\Gamma(\frac{s}{2})\pi^{-\frac{s}{2}}$ is the completed L-series associated to the character $\varepsilon_{\Delta}$.
 \end{theorem}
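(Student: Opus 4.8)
The plan is to realize $I_{\Delta,r}(\tau,\mathcal{E}(N,z,s))$ as a Rankin--Selberg integral and unfold it, exploiting the two ``Eisenstein/Poincar\'e'' structures available: the one in $z$ coming from $\mathcal{E}(N,z,s)=N^{2s}\pi^{-s}\Gamma(s)\zeta^{(N)}(2s)\sum_{\gamma\in\Gamma_\infty\backslash\Gamma_0(N)}(\Im\gamma z)^s$, and the one in $\tau$ supplied by Proposition~\ref{thetakernal}, which writes $\Theta_{\Delta,r}(\tau,z)$ as a sum over $\Gamma_\infty'\backslash\Gamma'$ of a slashed seed. Since the theta kernel is weight $0$ for $\Gamma_0(N)$ in $z$ and decays like $e^{-Cy^2}$ at the cusp while $\mathcal{E}(N,z,s)$ grows only polynomially, the integral converges for $\Re s$ large, and I may unfold the $z$-Eisenstein series against the invariant kernel. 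This replaces $\int_{\Gamma_0(N)\backslash\H}$ by the strip integral $\int_{\Gamma_\infty\backslash\H}$ with $(\Im z)^s$ in place of the Eisenstein series.

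First I would substitute the expression of Proposition~\ref{thetakernal} and interchange the sum over $\gamma\in\Gamma_\infty'\backslash\Gamma'$ with the $z$-integration. Because the slash $|_{3/2,\rho_K}\gamma$ acts only on the variable $\tau$, it commutes with integration in $z$; hence it suffices to integrate the unslashed seed
\[
\Phi_n(\tau,z)=\exp\!\Big(-\tfrac{\pi y^2 Nn^2}{v\Delta}\Big)v^{-3/2}\sum_{\lambda\in K^\sharp}e\big(\Delta Q(\lambda)\overline\tau-2N\lambda n x\big)e_{r\lambda}
\]
against $(\Im z)^{s}$ over the strip and then apply $\sum_\gamma(\,\cdot\,)|_{3/2,\rho_K}\gamma$ at the end. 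The $x$-integral $\int_0^1 e(-2N\lambda n x)\,dx$ vanishes unless $\lambda=0$ (recall $K^\sharp=\tfrac1{2N}\Z$ and $n\ge1$), so only the single term $e_{r\cdot 0}=e_{\mu_0}$ survives; this is precisely the seed of $E_L(\tau,s)$. The remaining $y$-integral is a Gamma integral,
\[
\int_0^\infty y^{s+1}\exp\!\Big(-\tfrac{\pi Nn^2}{v\Delta}y^2\Big)\,dy=\tfrac12\,\Gamma\!\Big(\tfrac s2+1\Big)\Big(\tfrac{\pi Nn^2}{v\Delta}\Big)^{-\frac s2-1},
\]
and the factor $v^{-3/2}\cdot v^{\frac s2+1}=v^{\frac{s-1}2}$ reproduces exactly the seed exponent of $E_L(\tau,s)$.

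Next I would carry out the sum over $n$. The character $(\tfrac{\Delta}{n})$ together with the surviving power $n^{2}\cdot n^{-s-2}=n^{-s}$ combines into $\sum_{n\ge1}(\tfrac{\Delta}{n})n^{-s}=L(\varepsilon_\Delta,s)$; the accompanying $\Gamma(\tfrac s2+1)=\tfrac s2\Gamma(\tfrac s2)$ and $\pi^{-\frac s2-1}$ assemble, with $L(\varepsilon_\Delta,s)$, into the completed series $\Lambda(\varepsilon_\Delta,s)=L(\varepsilon_\Delta,s)\Gamma(\tfrac s2)\pi^{-\frac s2}$, while the power $\Delta^{-1}\cdot\Delta^{\frac s2+1}=\Delta^{\frac s2}$ produces the prefactor $\Delta^{s/2}$. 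Reassembling $\sum_{\gamma\in\Gamma_\infty'\backslash\Gamma'}\big(v^{\frac{s-1}2}e_{\mu_0}\big)|_{3/2,\rho_K}\gamma$ and using the isometry $K^\sharp/K\cong L^\sharp/L$ to identify $\rho_K$ with $\rho_L$, this sum is $E_L(\tau,s)$. Finally, collecting the constant $N^{2s}\pi^{-s}\Gamma(s)\zeta^{(N)}(2s)$ from $\mathcal{E}(N,z,s)$ with the constants produced above and comparing with the definition $\mathcal{E}_L(\tau,s)=-\tfrac s4\pi^{-s-1}\Gamma(s)\zeta^{(N)}(2s)N^{\frac12+\frac32 s}E_L(\tau,s)$ yields the claimed identity.

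The main obstacle is the exact matching of all normalization constants, in particular the various factors of $2$ entering through the orbifold multiplicity $\tfrac2{|\Gamma_w|}$ in the definition of the divisors, the $\pm1$ in $\Gamma_\infty$ (hence the precise width and multiplicity of the strip in the unfolding), and the metaplectic normalization of the slash operator; a single misplaced factor of $2$ changes the naive $-\tfrac s8$ into the required $-\tfrac s4$. Alongside this bookkeeping I would justify rigorously the interchange of the $\gamma$- and $n$-summations with the $z$-integration (absolute convergence for $\Re s$ large) and the legitimacy of the unfolding, and then invoke meromorphic continuation in $s$ to extend the identity to the point $s=1$ used in Theorem~\ref{Main}. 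The computation parallels \cite[Theorem~6.1]{AE} for $N=1$; the new feature is keeping the level-$N$ factors $\zeta^{(N)}$ and the powers of $N$ consistent throughout.
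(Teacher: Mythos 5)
Your proposal is correct and follows essentially the same route as the paper's proof: both substitute the kernel expansion of Proposition~\ref{thetakernal}, unfold the $z$-Eisenstein series to a strip integral, evaluate the $x$-integral (which isolates $\lambda=0$ and produces $e_{\mu_0}$) and the Gaussian $y$-integral, sum over $n$ to obtain $L(\varepsilon_\Delta,s)$, and reassemble the constants against the definition of $\mathcal{E}_L(\tau,s)$. The only cosmetic difference is that you defer the slash $|_{3/2,\rho_K}\gamma$ until after the $z$-integration while the paper expands the $|c\tau+d|$ factors explicitly; these are equivalent.
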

 \begin{proof}  One has by  Proposition \ref{thetakernal},
 \begin{eqnarray}
 &&\Theta_{\Delta, r}(\tau, z)\nonumber\\
 &&=-y\frac{N^{3/2}}{ \Delta }\sum_{n=1}^{\infty}\sum_{\gamma \in \Gamma_{\infty} ^{\prime}\diagdown\Gamma^{\prime}}n^{2}\big(\frac{\Delta}{n}\big) \exp(-\pi\frac{y^{2}Nn^{2} \mid c\tau +d \mid^{2} }{v \Delta})v^{-3/2}\mid c\tau+d \mid^3\nonumber\\
 &&\times(c\tau+d)^{-3/2} \sum_{\lambda \in K^{\sharp}}e( \Delta Q(\lambda)\overline{ \tau}-2N\lambda nx )\rho_{K}^{-1}(\gamma)e_{r\lambda} dxdy\nonumber\\
&&=-\frac{yN^{3/2}}{ v^{3/2}\Delta }\sum_{n=1}^{\infty}\sum_{\gamma \in \Gamma_{\infty} ^{\prime}\diagdown\Gamma^{\prime}}n^{2}\big(\frac{\Delta}{n}\big) \exp(-\frac{\pi y^{2}Nn^{2} \mid c\tau +d \mid^{2} }{v \Delta})\nonumber\\
 &&\times(c\overline{\tau}+d)^{3/2} \sum_{\lambda \in K^{\sharp}}e(\Delta Q(\lambda)\overline{ \tau}-2N\lambda nx )\rho_{K}^{-1}(\gamma)e_{r\lambda} dxdy.\nonumber
 \end{eqnarray}
 Here for every coprime pair $(c,d)$, there is unique $\gamma=\kzxz{\ast}{\ast}{c}{d} \in \Gamma_{\infty} ^{\prime}\diagdown
\Gamma^{\prime}$, and we could identify them.

 Unfolding the integral, for $\Re(s)>1$, we have
\begin{eqnarray}\label{thetalift}
%\\
&&\emph{I}_{\Delta, r}(\tau, E(N, z, s))=\int_{\Gamma_{\infty}\diagdown\H} \Theta_{\Delta, r} (\tau, z)y^{s}\nonumber\\
&=&-\frac{N^{\frac{3}{2}}}{v^{\frac{3}{2}} \Delta }\sum_{n=1}^{\infty}n^{2}\big(\frac{\Delta}{n}\big)\sum \limits_{ \gamma \in \Gamma_{\infty} ^{\prime}\diagdown
\Gamma^{\prime}}(c\overline{\tau}+d)^{3/2}\int_{0}^{\infty}e^{\big(-\frac{\pi Ny^{2}n^{2}}{ \Delta v}\mid c\tau+d\mid^{2}\big)}y^{s+1}dy\nonumber
\\
&&\times \rho_{K}^{-1}(\gamma)\int_{0}^{1}\sum_{\lambda \in K^{\sharp}}e( \Delta Q(\lambda)\overline{ \tau}-2N\lambda nx )e_{r\lambda} dx.\nonumber
\end{eqnarray}
Notice that
$$
\int_{0}^{1}\sum_{\lambda \in K^{\sharp}}e( \Delta Q(\lambda)\overline{ \tau}-2N\lambda nx )e_{r\lambda} dx= e_{\mu_0}.
$$
Then one has

\begin{eqnarray}
&&\int_{\Gamma_{\infty}\diagdown\H} \Theta_L (\tau, z)y^{s}\nonumber
 \\
&=&-\frac{N^{\frac{3}{2}}}{2v^{\frac{3}{2}}\Delta }\sum_{n=1}^{\infty}n^{2}\big(\frac{\Delta}{n}\big)\sum \limits_{ \gamma \in \Gamma_{\infty} ^{\prime}\diagdown \Gamma^{\prime}}
\frac{v^{\frac{s+2}{2}} \Delta ^{\frac{s+2}{2}}(c\overline{\tau}+d)^{3/2}\Gamma\big(\frac{s}{2}+1\big)}{\pi^{\frac{s+2}{2}}\mid
c\tau+d\mid^{s+2}N^{\frac{s+2}{2}}n^{s+2}}\rho_{K}^{-1}(\gamma)e_{\mu_{0}} \nonumber\\
&=&-\frac{1}{2}N^{\frac{1-s}{2}} \Delta^{\frac{s}{2}} L(\varepsilon_{\Delta}, s)\Gamma\big(\frac{s}{2}+1\big)\sum \limits_{ \gamma  \in \Gamma_{\infty} ^{\prime}\diagdown
  \Gamma^{\prime}} \frac{v^{\frac{s-1}{2}}(c\overline{\tau}+d)^{3/2}}{\pi^{\frac{s}{2}+1}\mid
  c\tau+d\mid^{s+2}}\rho_{K}^{-1}(\gamma)e_{\mu_{0}} \nonumber\\
 &=&-N^{\frac{1-s}{2}}\Delta^{\frac{s}{2}} \frac{s}{4\pi}\Lambda(\varepsilon_{\Delta},s)\sum \limits_{ \gamma  \in \Gamma_{\infty} ^{\prime}\diagdown \Gamma^{\prime}}
 \big(v^{\frac{s-1}{2}}e_{\mu_{0}} \big)\mid_{3/2,\rho_{L}} \gamma. \nonumber
\end{eqnarray}
Notice in the last equality we identify $\rho_{L}$ with $\rho_{K}$ because the isomorphism  $K^{\sharp}/K\cong L^{\sharp}/L$  keep the quadratic form.
 For the  normalized Eisenstein series, we obtain
 \begin{eqnarray}
 \emph{I}_{\Delta, r}(\tau, \mathcal{E}(N, z, s))= \Delta^{\frac{s}{2}}\Lambda(\varepsilon_{\Delta},s)\mathcal{E}_{L}(\tau,s).\nonumber
 \end{eqnarray}
 \end{proof}

 Taking residue  of both sides of the equation (\ref{lifteisenstein}) at $ s=1$, we have the following result.
\begin{corollary}\label{thetaintegral}
 \begin{equation}
 \emph{I}_{\Delta, r}(\tau, 1)=\begin{cases} \frac{2}{\varphi(N)}\mathcal{E}_{L}(\tau,1) &\ff \Delta=1,\\
 0 &\ff \Delta >1.
 \end{cases}
 \end{equation}
\end{corollary}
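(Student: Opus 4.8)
The plan is to derive Corollary~\ref{thetaintegral} by taking the residue at $s=1$ of both sides of the identity \eqref{lifteisenstein} in Theorem~\ref{theo:eisensteinlift}. On the left-hand side, the key observation is that the normalized weight-$0$ Eisenstein series $\mathcal{E}(N,z,s)$ has a simple pole at $s=1$ whose residue is a constant (independent of $z$), coming from the residue of $\zeta^{(N)}(2s)$ at $s=1/2$ together with the normalizing factors $N^{2s}\pi^{-s}\Gamma(s)$; since the theta lift $I_{\Delta,r}(\tau,\cdot)$ is linear in $f$, taking the residue inside the integral replaces $\mathcal{E}(N,z,s)$ by this constant. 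Thus
\[
\Res_{s=1} I_{\Delta,r}(\tau,\mathcal{E}(N,z,s)) = c_N\, I_{\Delta,r}(\tau,1)
\]
for an explicit constant $c_N$, and I would compute $c_N$ from the known residue of the completed Dedekind-type factor $\zeta^{(N)}(2s)$ at $s=1$, yielding a factor involving $\tfrac{1}{\varphi(N)}$ after accounting for $\prod_{p\mid N}(1-p^{-2s})$.

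On the right-hand side, I would analyze the behavior of $\Delta^{s/2}\Lambda(\varepsilon_\Delta,s)\mathcal{E}_L(\tau,s)$ at $s=1$. Here the dichotomy between $\Delta=1$ and $\Delta>1$ enters precisely through $\Lambda(\varepsilon_\Delta,s)=L(\varepsilon_\Delta,s)\Gamma(\tfrac{s}{2})\pi^{-s/2}$. When $\Delta=1$, the character $\varepsilon_\Delta$ is trivial, so $L(\varepsilon_\Delta,s)=\zeta(s)$ has a simple pole at $s=1$, and this pole survives to produce a nonzero residue proportional to $\mathcal{E}_L(\tau,1)$. When $\Delta>1$, the character $\varepsilon_\Delta(n)=\bigl(\tfrac{\Delta}{n}\bigr)$ is a nontrivial primitive real character, so $L(\varepsilon_\Delta,s)$ is entire and $\Lambda(\varepsilon_\Delta,s)$ is holomorphic at $s=1$; meanwhile $\mathcal{E}_L(\tau,s)$ is holomorphic there as well. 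Hence the right-hand side is holomorphic at $s=1$, its residue vanishes, and we conclude $I_{\Delta,r}(\tau,1)=0$.

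Matching the two sides gives the stated formula: in the case $\Delta=1$ the surviving residues combine to $\tfrac{2}{\varphi(N)}\mathcal{E}_L(\tau,1)$ once the explicit constants are tracked, and in the case $\Delta>1$ the vanishing of $\Res_{s=1}$ of the holomorphic right-hand side forces $I_{\Delta,r}(\tau,1)=0$. The main subtlety I would want to verify carefully is the justification for interchanging the residue operation with the theta integral on the left-hand side: this requires the convergence and meromorphic continuation already guaranteed by the decay estimate $\Theta_{\Delta,r}(\tau,z)=O(e^{-Cy^2})$ cited from \cite[Proposition 4.1]{BF1}, so that $I_{\Delta,r}(\tau,s)$ continues meromorphically in $s$ with the pole located exactly where $\mathcal{E}(N,z,s)$ has its pole. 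The bookkeeping of normalizing constants to land on the precise coefficient $\tfrac{2}{\varphi(N)}$ is routine but is the place where an arithmetic slip is easiest, so I would double-check it against the $\Delta=1$ result recorded in \cite{DY}.
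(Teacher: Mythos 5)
Your proposal is correct and is precisely the paper's (one-line) argument: take residues at $s=1$ of both sides of Theorem \ref{eisensteinlift}, using that $\mathcal{E}(N,z,s)$ has a simple pole at $s=1$ with constant nonzero residue (namely $\varphi(N)/2$), so the residue passes through the theta integral, while the right-hand side $\Delta^{s/2}\Lambda(\varepsilon_\Delta,s)\mathcal{E}_L(\tau,s)$ is holomorphic at $s=1$ for $\Delta>1$ (nontrivial real character) and has residue $\mathcal{E}_L(\tau,1)$ for $\Delta=1$. One small slip worth fixing: the pole of $\mathcal{E}(N,z,s)$ at $s=1$ does not come from $\zeta^{(N)}(2s)$, which is regular and nonzero there, but from the sum $\sum_{\gamma\in\Gamma_\infty\backslash\Gamma_0(N)}(\Im\gamma z)^s$ itself, whose residue is the reciprocal of $\operatorname{vol}(\Gamma_0(N)\backslash\H)$; the fact you actually need, that the residue is a nonzero constant in $z$, is unaffected.
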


Recall  the modular form of weight $k=12\varphi(N)$ for $\Gamma_0(N)$ defined in \cite[(1.6)]{DY}:
\begin{equation} \label{eq:DeltaN}
\Delta_N(z) =\prod_{t|N} \Delta(t z) ^{a(t)}
\end{equation}
with
$$
a(t) = \sum_{r|t} \mu(\frac{t}r) \mu(\frac{N}r) \frac{\varphi(N)}{\varphi(\frac{N}r)},
$$
where $\mu(n)$ is the the M\"obius function and $\varphi(N)$ is the Euler function.

For a modular form $f$ of weight $k$ and level $N$, we define its  normalized Petersson metric as
\begin{equation}
\| f(z)\| = |f(z) (4 \pi e^{-C} y)^{\frac{k}2}|
\end{equation}
where $C=\frac{\log 4\pi +\gamma}{2}$ with Euler constant $\gamma$.
\begin{theorem}\label{theo:theoremlift}
\begin{equation}
-\frac{1}{12}\emph{I}_{\Delta, r}(\tau, \log \| \Delta_{N} \|)= \begin{cases} \mathcal{E}_{L}^{\prime}(\tau,1) &\ff \Delta=1,\\
\log(u_{\Delta})h(\Delta)\mathcal{E}_{L}(\tau, 1)& \ff \Delta>1,
\end{cases}
\end{equation}
where $u_{\Delta} >1$ is the fundamental unit and $h(\Delta)$ is the class number of real quadratic field with discriminant $\Delta$.
\end{theorem}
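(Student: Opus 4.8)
The plan is to prove Theorem \ref{theo:theoremlift} by reducing the theta lift of $\log\|\Delta_N\|$ to the lift of an Eisenstein series via a Kronecker limit formula, and then applying Theorem \ref{theo:eisensteinlift}. The key input is the Kronecker limit formula for $\Gamma_0(N)$, cited as \cite[Theorem 1.5]{DY}, which expresses $\log\|\Delta_N\|$ (up to the normalization constant $C$ in the Petersson metric) as the constant term in the Laurent expansion at $s=1$ of the normalized Eisenstein series $\mathcal{E}(N,z,s)$. Concretely, I would first write down the expansion
\begin{equation}
\mathcal{E}(N,z,s) = \frac{a_{-1}(N)}{s-1} + \Big(\text{const}\cdot \log\|\Delta_N(z)\| + b(N)\Big) + O(s-1),
\end{equation}
extracting the precise constant $-\tfrac{1}{12\varphi(N)}$ (and any additive level-dependent constant) from \cite{DY}, so that $\log\|\Delta_N\|$ appears as the $s=1$ constant term of a multiple of $\mathcal{E}(N,z,s)$ minus its pole.

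Next I would apply the twisted theta lift $I_{\Delta,r}(\tau,\,\cdot\,)$ term by term to this Laurent expansion. By linearity and the fact (proved via the decay estimate from \cite[Proposition 4.1]{BF1}) that the lift commutes with taking the Laurent expansion in $s$, I get that $I_{\Delta,r}(\tau,\log\|\Delta_N\|)$ equals the relevant coefficient in the expansion of $I_{\Delta,r}(\tau,\mathcal{E}(N,z,s))$ at $s=1$. By Theorem \ref{theo:eisensteinlift}, this lift equals $\Delta^{s/2}\Lambda(\varepsilon_\Delta,s)\mathcal{E}_L(\tau,s)$, so the entire computation reduces to understanding the behavior of the scalar factor $\Delta^{s/2}\Lambda(\varepsilon_\Delta,s)$ and of $\mathcal{E}_L(\tau,s)$ near $s=1$.

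Here the case distinction enters. When $\Delta>1$, the $L$-function $\Lambda(\varepsilon_\Delta,s)$ is entire and does not vanish at $s=1$, while $\mathcal{E}_L(\tau,s)$ is holomorphic at $s=1$ (no pole, since by Corollary \ref{thetaintegral} the constant lift vanishes for $\Delta>1$). Thus the constant term simply picks out the value $\Delta^{1/2}\Lambda(\varepsilon_\Delta,1)\mathcal{E}_L(\tau,1)$, and the scalar $\Delta^{1/2}\Lambda(\varepsilon_\Delta,1)$ must be identified with $\log(u_\Delta)h(\Delta)$ (up to the normalization absorbed by the $-\tfrac{1}{12}$). This identification is precisely the analytic class number formula / Dirichlet class number formula for the real quadratic field $\Q(\sqrt{\Delta})$, namely $L(\varepsilon_\Delta,1) = \tfrac{2\,h(\Delta)\log u_\Delta}{\sqrt{\Delta}}$, combined with the archimedean factor $\Gamma(\tfrac12)\pi^{-1/2}$ in $\Lambda$ and the constant $C = \tfrac{\log 4\pi + \gamma}{2}$ hidden in the metric. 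When $\Delta=1$, the factor $\Delta^{s/2}\Lambda(\varepsilon_1,s)=\Lambda(\varepsilon_1,s)$ now has a pole at $s=1$ (it is essentially $\zeta$-type), which cancels against the vanishing constant term to produce a derivative; extracting the constant term of the product then yields $\mathcal{E}_L'(\tau,1)$.

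The main obstacle I anticipate is the careful bookkeeping of all the normalization constants so that the two sides match exactly rather than up to an unknown scalar. In particular, matching the constant $C$ in the renormalized Petersson metric \eqref{eq:DeltaN} against the constant term of the Kronecker limit formula and against the archimedean factor $\Gamma(s/2)\pi^{-s/2}$ of $\Lambda(\varepsilon_\Delta,s)$ requires keeping track of $\log 4\pi$, the Euler constant $\gamma$, and the weight $k=12\varphi(N)$ simultaneously; it is precisely this design of the normalization \eqref{eq1.11} (with $C = \tfrac{\log 4\pi+\gamma}{2}$) that makes the final answer clean, and verifying that it does so is where the real work lies. The justification that the lift can be interchanged with the Laurent expansion at $s=1$ (uniform convergence near $s=1$, controlled by the Gaussian decay of $\Theta_{\Delta,r}$ at the cusps) is a secondary technical point that I would dispatch using the estimate from \cite[Proposition 4.1]{BF1}.
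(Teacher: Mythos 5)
Your proposal follows the paper's proof essentially verbatim: the Kronecker limit formula for $\Gamma_0(N)$ from \cite{DY}, Theorem \ref{theo:eisensteinlift}, the vanishing $I_{\Delta,r}(\tau,1)=0$ for $\Delta>1$ (Corollary \ref{thetaintegral}), and the Dirichlet class number formula for $\Q(\sqrt{\Delta})$ are exactly the ingredients used there. The only point you overcomplicate is the bookkeeping of additive normalization constants (the $C$ in the metric, the constant term of the limit formula): for $\Delta>1$ these are all annihilated by the lift precisely because $I_{\Delta,r}(\tau,1)=0$, which is how the paper sidesteps that issue.
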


\begin{proof}
We proved the case $\Delta=1$ in \cite[Theorem 1.6]{DY}. Now we assume $\Delta>1$.
From the Kronecker limit formula for $\Gamma_{0}(N)$\cite[Theorem 1.5]{DY}
$$
\lim _{s\rightarrow 1} \bigg( \mathcal{E}(N, z, s)-\varphi(N)\zeta^{\ast}(2s-1)\bigg)
=-\frac{1}{12}\log \big(y^{6\varphi(N)}\mid \Delta_{N}(z)\mid\big),
$$ one has
\begin{eqnarray}
&&-\frac{1}{12}\emph{I}_{\Delta, r}(\tau, \log  \mid\Delta_N(z) y^{6 \varphi(N)}\mid) \nonumber\\
&=&\lim _{s\rightarrow 1} \big(\emph{I}_{\Delta, r}(\tau, \mathcal{E}(N,z,s))
-\emph{I}_{\Delta, r}(\tau, \varphi(N)\zeta^{\ast}(2s-1))\big)\nonumber.
\end{eqnarray}
Here $\zeta^{\ast}(s)=\pi^{-\frac{s}{2}}\Gamma(\frac{s}{2})\zeta(s)$.
From the Theorem \ref{eisensteinlift} and Corollary \ref{thetaintegral}, we obtain
\begin{eqnarray}
-\frac{1}{12}\emph{I}_{\Delta, r}(\tau, \log  \|\Delta_N(z)\|) &=&\sqrt{\Delta}\Lambda(\varepsilon_{\Delta}, 1)\mathcal{E}_{L}(\tau, 1)\nonumber\\
&=&\log(u_{\Delta})h(\Delta)\mathcal{E}_{L}(\tau, 1).\nonumber
\end{eqnarray}
\end{proof}

\section{Twisted Kudla's  Green function} \label{sect:KudlaGreenFunction}

Following   Kudla's  methods in \cite{Kucentral}, we  construct twisted Kudla's Green function  for $Z_{\Delta, r}(n, \mu)$ in this section.

For $r>0$ and $s \in \R$, let
\begin{equation}\label{belta}
\beta_s(r) =\int_1^\infty e^{-rt } t^{-s} dt ,
\end{equation}
and
\begin{equation}
\xi_{\Delta}(w, z)=\beta_1(2\pi R(w, z)_ \Delta).
\end{equation}
\begin{definition}[Twisted Kudla's Green functions]

For $n \in Q(\mu)+\Z$, define
\begin{equation} \label{eq:TWGreen}
\Xi_{\Delta, r}(n , \mu, v)(z)=\sum_{\delta \in L^{\sharp}/L^{\Delta}, \delta\equiv r \mu( L), \atop Q_{\Delta}(\delta)=n}\chi_{\Delta}(\delta)
\Xi_{L^\Delta}\big(n, \delta, v\big)(z),
\end{equation}
where $\Xi_{L^\Delta} \big(n, \delta, v\big)(z)$ is the Kudla's Green function associated to the lattice $L^\Delta $ with quadratic form $Q_{\Delta}$ given by \begin{equation}
\Xi_{L^\Delta}\big(n, \delta, v\big)(z)=\sum_{ 0\neq w\in L_{\delta}^\Delta[n]}\xi_{\Delta}(\sqrt{v}w, z).
\end{equation}
\end{definition}
So one has
\begin{equation}
\Xi_{\Delta, r}(n , \mu, v)(z)=\sum_{0\ne w\in L_{r\mu}[ \Delta n]}\chi_{\Delta}(w)\xi_{\Delta}(\sqrt{v}w, z),
\end{equation}
which is clearly invariant under $\Gamma_0(N)$. Recall from the Lemma \ref{pullbackdivisor}
$$
\sum_{\delta \in L^{\sharp}/L^{\Delta}, \delta\equiv r \mu( L) }\chi_{\Delta}(\delta) Z_{\Delta}(n, \delta)= \pi_{\Gamma_{\Delta}}^{\ast}(Z_{\Delta, r}(n, \mu)).
$$
Since $\Xi_{L^\Delta}(n, \delta, v)$ is a Green function for $Z_{\Delta}(n, \delta)$ on $X_{\Gamma_\Delta}=\Gamma_\Delta \backslash \H$ by \cite{Kucentral}, we have thus the following lemma.

\begin{lemma}  When $ n>0$, $\Xi_{\Delta, r}(n , \mu, v)(z)$  is a Green function for $Z_{\Delta, r}(n, \mu)$ on $Y_{0}(N)=\Gamma_0(N) \backslash \H$, and satisfies the following current equation,
\begin{equation}d d^c  [\Xi_{\Delta, r}(n , \mu, v)(z)] +\delta_{Z_{\Delta, r}(n, \mu)}=[\omega_{\Delta, r}(n, \mu, v)],
\end{equation}
where  $\omega_{\Delta, r}(n, \mu,v)$ is the differential form
\begin{equation}
\omega_{\Delta, r}(n, \mu,v)=\sum_{w \in L_{\mu}[n \Delta]}\chi_{\Delta}(w)\varphi^{0}_{\Delta}(w, z).
\end{equation}
When $ n\le 0$, $\Xi_{\Delta, r}(n , \mu, v)(z)$ is smooth on $Y_0(N)$.
\end{lemma}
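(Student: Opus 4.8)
The plan is to reduce the assertion to Kudla's local result on the intermediate curve $X_{\Gamma_\Delta}$ and then descend along the covering $\pi_{\Gamma_\Delta}$. The starting point is that, for each $\delta \in L^{\sharp}/L^\Delta$, the untwisted function $\Xi_{L^\Delta}(n,\delta,v)$ is, by \cite{Kucentral}, a Green function for $Z_\Delta(n,\delta)$ on $X_{\Gamma_\Delta} = \Gamma_\Delta \backslash \H$ satisfying
$$
dd^c[\Xi_{L^\Delta}(n,\delta,v)] + \delta_{Z_\Delta(n,\delta)} = [\omega_{L^\Delta}(n,\delta,v)], \qquad \omega_{L^\Delta}(n,\delta,v) = \sum_{w \in L^\Delta_\delta[n]} \varphi^{0}_{\Delta}(\sqrt v\, w, z).
$$
Here $\Gamma_\Delta$ acts trivially on $L^{\Delta,\sharp}/L^\Delta$, so Kudla's setup applies verbatim to the lattice $(L^\Delta, Q_\Delta)$.

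First I would take the $\chi_\Delta$-weighted sum of these equations over those $\delta$ with $\delta \equiv r\mu\,(L)$ and $Q_\Delta(\delta) = n$, using that $dd^c$ and the assignment $Z \mapsto \delta_Z$ are linear. By the definition of $\Xi_{\Delta,r}$ the potential term collapses to $\Xi_{\Delta,r}(n,\mu,v)$; by Lemma \ref{pullbackdivisor} the divisor term becomes $\pi_{\Gamma_{\Delta}}^{\ast}(\delta_{Z_{\Delta,r}(n,\mu)})$; and the very same re-indexing over $\delta$ that proves Lemma \ref{pullbackdivisor} identifies the curvature term with $\pi_{\Gamma_{\Delta}}^{\ast}(\omega_{\Delta,r}(n,\mu,v))$. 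This yields, on $X_{\Gamma_\Delta}$,
$$
dd^c[\Xi_{\Delta,r}(n,\mu,v)] + \pi_{\Gamma_{\Delta}}^{\ast}(\delta_{Z_{\Delta,r}(n,\mu)}) = \pi_{\Gamma_{\Delta}}^{\ast}([\omega_{\Delta,r}(n,\mu,v)]).
$$

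Next, since $\Xi_{\Delta,r}(n,\mu,v)$ is $\Gamma_0(N)$-invariant it is itself the pullback of a function on $Y_0(N)$, and $dd^c$ commutes with pullback by the (orbifold) covering $\pi_{\Gamma_\Delta}$. As $\pi_{\Gamma_{\Delta}}^{\ast}$ is injective on currents, I can strip the pullback from every term and obtain the asserted current equation on $Y_0(N)$, which exhibits $\Xi_{\Delta,r}(n,\mu,v)$ as a Green function for $Z_{\Delta,r}(n,\mu)$. I expect the step needing the most care to be this descent: one must justify that $\pi_{\Gamma_\Delta}$ is unramified in the orbifold sense, so that pullback commutes with $dd^c$ and is injective, handling the elliptic points and cusps, and check that the re-indexing producing $\pi_{\Gamma_{\Delta}}^{\ast}(\omega_{\Delta,r})$ is exactly parallel to Lemma \ref{pullbackdivisor}. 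An alternative that sidesteps the covering entirely is to work on $\H$ and sum the single-vector identity $dd^c[\xi_{\Delta}(\sqrt v\, w, z)] + \delta_{Z(w)} = [\varphi^{0}_{\Delta}(\sqrt v\, w, z)]$, the basic computation underlying \cite{Kucentral}, against the weights $\chi_{\Delta}(w)$ over $w \in L_{r\mu}[\Delta n]$, keeping track of the orbifold multiplicities $\tfrac{2}{|\Gamma_w|}$.

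Finally, for $n \le 0$ I would argue smoothness directly from $\Xi_{\Delta,r}(n,\mu,v)(z) = \sum_{0 \ne w\in L_{r\mu}[\Delta n]} \chi_{\Delta}(w)\, \xi_{\Delta}(\sqrt v\, w, z)$. The only singularities of $\xi_{\Delta}(\sqrt v\, w, z) = \beta_1(2\pi R(\sqrt v\, w, z)_\Delta)$ come from the logarithmic pole of $\beta_1$ at $0$, i.e. from $R(w,z)_\Delta = 0$, which by (\ref{eq3.5}) forces $w_{z^\perp} = 0$, that is $w \in \R\, w(z)$. Since $V_\R$ has signature $(1,2)$ and $w(z)$ spans a positive line, this requires $Q(w) > 0$; when $n \le 0$ every nonzero $w \in L_{r\mu}[\Delta n]$ has $Q(w) = \Delta n \le 0$, so $R(w,z)_\Delta > 0$ throughout $\H$ and each summand is smooth. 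Exponential decay of $\beta_1$ gives local uniform convergence of the sum, hence smoothness of $\Xi_{\Delta,r}(n,\mu,v)$ on $Y_0(N)$.
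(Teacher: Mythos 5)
Your proposal is correct and follows essentially the same route as the paper: the authors likewise invoke Kudla's result that $\Xi_{L^\Delta}(n,\delta,v)$ is a Green function for $Z_\Delta(n,\delta)$ on $X_{\Gamma_\Delta}$ and combine it with Lemma \ref{pullbackdivisor}, leaving the $\chi_\Delta$-weighted summation and the descent along $\pi_{\Gamma_\Delta}$ implicit where you spell them out. Your added detail (including the direct smoothness argument for $n\le 0$ via $R(w,z)_\Delta>0$) is consistent with, and a faithful elaboration of, what the paper asserts.
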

Now we consider behaviors of these Green functions  at cusps.   We recall some definitions  given by Funke and Bruinier, see \cite{BF} and \cite{Fu}. Let $\Iso(V)$ be the set of isotropic non-zero vectors $\ell=\Q w$ of $V$, with $0\ne w \in V$ and $Q(w)=0$. Given $\ell =span\{{\kzxz{a}{b}{c}{d}}\} \in \Iso(V)$, let $P_\ell= \frac{a}c$ be the associated cusp, which depends only on the equivalence  class of isotropic line $\ell$. Two isotropic lines give the same cusp in $Y_{0}(N)$ if and only if there is $\gamma \in \Gamma_{0}(N)$ such that $ \gamma \ell_1 = \ell_2$.

Let $\ell_\infty=span\{\kzxz {0} {1} {0} {0} \}\in \Iso(V)$ and then $P_\infty=\infty$ be the associated cusp. In general,
for an  isotropic line $\ell $, there exists $\sigma_{\ell} \in \SL_{2}(\Z)$ such that $\ell_\infty=  \sigma_{\ell}\ell.$
  Then
  $$\sigma_{\ell}\Gamma_{\ell}\sigma_{\ell}^{-1}=\{ \pm\kzxz {1}{m\kappa_{\ell}}{0}{1}, m\in \Z\},$$
   where $\Gamma_{\ell} \subseteq \Gamma_{0}(N)$ is the stabilizer of $\ell$ and $\kappa_{\ell} > 0$ is the classical width of the associated  cusp $P_{\ell}$.
   On the other hand, there is  another positive number $\beta_{\ell}> 0$, depending on $L$ and the cusp $P_\ell$,  such that $\kzxz{0}{\beta_{\ell}}{0}{0}$ is a primitive element in $\ell_\infty \bigcap \sigma_{\ell}L$.  The   Funke constant $\varepsilon_{\ell}=\frac{\kappa_{\ell}}{\beta_{\ell}}$  at cusp $P_\ell$ is defined  in \cite[Section 3]{Fu}, which is called width by Funke. We will simply denote $\kappa =\kappa_\infty$.

Let $ \delta_{w,\ell} $ be the number of  isotropic lines $ \ell_w \in \Iso(V)$  which is perpendicular to $w$ and belongs to the same cusp as $\ell$. We often drop the index $\ell$  when $\ell=\ell_{\infty}$.
In such a case, $w$ is orthogonal to two isotropic lines $ \ell_{w}=span\{X\}$ and $ \tilde{\ell}_{w}=span\{\tilde{X}\}$ such that $(w, X, \tilde{X})$ is a positively oriented basis of $V$. We denote $w \sim \ell_{w}$. Notice that $\tilde{\ell}_{w}=\ell_{-w}$.
 When $-4Nn$ is a square, Funke defined  in \cite[Section 3]{Fu}
 \begin{equation}
 L_{\mu, \ell}[n]=\{ w \in L_{\mu}[n]; w \sim \ell\},
 \end{equation}
 Here we use the different notation.

\begin{lemma}\label{lemmacoset}
Let $-4Nn$ be a square, then \begin{equation}
\sum_{w\in \Gamma_{0}(N)\setminus L_{\mu}[n]}\delta_{w}=\mid \Gamma_{\ell_{\infty}}\setminus L_{\mu, \ell_{\infty}}[n]\mid +\mid \Gamma_{\ell_{\infty}}\setminus L_{-\mu, \ell_{\infty}}[n]\mid.
\end{equation}
Moreover,
if $2\mu \notin L$ and  $ L_{\mu}[n] \neq \phi$ , then  one of the numbers $\mid\Gamma_{\ell_{\infty}}\setminus L_{\mu, \ell_{\infty}}[n]\mid, ~\mid\Gamma_{\ell_{\infty}}\setminus L_{-\mu, \ell_{\infty}}[n]\mid$ is $\sqrt{-4Nn}$, the other is $0$, so the sum is $\sqrt{-4Nn}$.
\end{lemma}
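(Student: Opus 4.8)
The plan is to treat the counting identity and the explicit evaluation separately; the first is a clean orbit-bijection, while the second reduces to an elementary computation in $L^\sharp$ whose only delicate ingredient is an orientation sign.

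For the identity, I would use that, since $-4Nn$ is a positive square, each $w\in L_\mu[n]$ is orthogonal to exactly the two isotropic lines $\ell_w$ and $\tilde\ell_w=\ell_{-w}$, so by definition $\delta_w=\mathbf 1(\ell_w\sim\ell_\infty)+\mathbf 1(\ell_{-w}\sim\ell_\infty)$, where $\sim$ denotes $\Gamma_0(N)$-equivalence of isotropic lines (``same cusp''). Because $\ell_{\gamma w}=\gamma\ell_w$ and $\Gamma_0(N)$ permutes cusps, the subsets $A=\{w\in L_\mu[n]:\ell_w\sim\ell_\infty\}$ and $B=\{w\in L_\mu[n]:\ell_{-w}\sim\ell_\infty\}$ are $\Gamma_0(N)$-stable, and $\sum_{w}\delta_w=|\Gamma_0(N)\backslash A|+|\Gamma_0(N)\backslash B|$. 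I would then exhibit a bijection $\Gamma_0(N)\backslash A\to\Gamma_{\ell_\infty}\backslash L_{\mu,\ell_\infty}[n]$ sending the class of $w$ to that of $\gamma w$, for any $\gamma\in\Gamma_0(N)$ with $\gamma\ell_w=\ell_\infty$; this is well defined since two such $\gamma$ differ by an element of $\Gamma_{\ell_\infty}$ and $\gamma w$ remains in $L_\mu$ (as $\Gamma_0(N)$ acts trivially on $L^\sharp/L$), and its inverse is induced by the inclusion $L_{\mu,\ell_\infty}[n]\hookrightarrow A$. Applying the involution $w\mapsto-w$, which commutes with $\Gamma_0(N)$ and carries $B$ onto $\{v\in L_{-\mu}[n]:\ell_v\sim\ell_\infty\}$, the same argument gives $|\Gamma_0(N)\backslash B|=|\Gamma_{\ell_\infty}\backslash L_{-\mu,\ell_\infty}[n]|$. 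Summing the two yields the first formula.

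For the ``moreover'' part I would compute $L_{\pm\mu,\ell_\infty}[n]$ directly. A short bilinear-form calculation gives $w\perp\ell_\infty$ iff the lower-left entry of $w$ vanishes, so such $w$ have the form $\left(\begin{smallmatrix}w_1&w_2\\0&-w_1\end{smallmatrix}\right)$ with $-Nw_1^2=Q(w)=n$; hence $w_1=\pm\frac{\sqrt{-4Nn}}{2N}$ and, writing $m=\sqrt{-4Nn}\in\Z$, the two signs represent the classes $\pm m \bmod 2N$ in $L^\sharp/L\cong\Z/2N$ (the free coordinate $w_2\in\frac1N\Z$ not affecting the class). Since $2\mu\notin L$ means $\mu\neq-\mu$, the congruence $w\equiv\mu$ pins down a single sign of $w_1$, and the orientation condition $\ell_w=\ell_\infty$ (rather than $\ell_{-w}=\ell_\infty$) built into $L_{\mu,\ell_\infty}[n]$ selects the sign of $w_1$ for which $\ell_w$ is the correctly oriented line; consequently exactly one of $L_{\mu,\ell_\infty}[n]$, $L_{-\mu,\ell_\infty}[n]$ is nonempty. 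On that nonempty set $w_1$ is fixed and $\Gamma_{\ell_\infty}=\{\pm\left(\begin{smallmatrix}1&t\\0&1\end{smallmatrix}\right):t\in\Z\}$ (width $\kappa_\infty=1$) acts by $w_2\mapsto w_2-2w_1 t=w_2-\frac{m}{N}t$ while fixing $w_1$, so the orbit space is $\frac1N\Z\big/\frac{m}{N}\Z\cong\Z/m\Z$, of size $m=\sqrt{-4Nn}$; the other set contributes $0$.

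The main obstacle is the orientation bookkeeping in the last step: one must fix the orientation of $V_\R$ compatible with the identification $\H\cong\D$ and check, by evaluating the sign of the determinant of the ordered basis $(w,X,\tilde X)$ in that orientation, that for $w\perp\ell_\infty$ the equality $\ell_w=\ell_\infty$ holds for exactly one sign of $w_1$. This is precisely what forces one of the two coset counts to vanish rather than both contributing $m$; everything else is the routine orbit bijection of the first part together with the elementary description of $L_\mu$ and of the stabilizer $\Gamma_{\ell_\infty}$, where one only has to keep the trivial $\Gamma_0(N)$-action on $L^\sharp/L$ and the cusp data $\kappa_\infty=1$, $\beta_{\ell_\infty}=1/N$ consistent throughout.
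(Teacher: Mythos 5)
Your proof of the displayed counting identity is correct and is in substance the same as the paper's: the paper defines a map from $\Gamma_{\ell_{\infty}}\backslash L_{\mu,\ell_{\infty}}[n]\,\cup\,\Gamma_{\ell_{\infty}}\backslash L_{-\mu,\ell_{\infty}}[n]$ to the set of orbits with $\delta_w\neq 0$ and counts fibers according to whether $\delta_w$ is $1$ or $2$, which is exactly your orbit bijection run in the opposite direction. No complaints there.

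For the ``moreover'' part the two routes genuinely differ. The paper simply cites Funke for the fact that each count $\mid\Gamma_{\ell_{\infty}}\backslash L_{\pm\mu,\ell_{\infty}}[n]\mid$ is $\sqrt{-4Nn}$ or $0$, and \cite[Lemma 6.2]{DY} for the value of the total sum, and combines; you instead recompute everything from scratch. Your computation of the nonempty case is correct and is a nice self-contained re-derivation of Funke's count: $w\perp\ell_\infty$ forces $w=\kzxz{w_1}{w_2}{0}{-w_1}$ with $w_1=\pm\frac{m}{2N}$, $m=\sqrt{-4Nn}$, and $\Gamma_{\ell_\infty}$ translates $w_2$ by $\frac{m}{N}\Z$ inside $\frac1N\Z$, giving $m$ orbits. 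However, your final step ``consequently exactly one of the two sets is nonempty'' has a gap that your own computation exposes: nonemptiness of $L_{\pm\mu,\ell_\infty}[n]$ requires $\mu\equiv\pm m\pmod{2N}$ under $L^\sharp/L\cong\Z/2N$, and this does \emph{not} follow from the hypothesis $L_\mu[n]\neq\emptyset$, which only gives $b^2\equiv m^2\pmod{4N}$ for some $b\equiv\mu$. For instance with $N=6$, $\mu=5\bmod 12$, $n=-1/24$ (so $m=1$), the form $[1,5,6]$ shows $L_\mu[n]\neq\emptyset$ and $2\mu\notin L$, yet no element of $L_{\mu}[n]\cup L_{-\mu}[n]$ is orthogonal to $\ell_\infty$, so both counts vanish. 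What your argument actually proves is the dichotomy: at most one of the two sets is nonempty (since $2\mu\notin L$ separates the two signs of $w_1$ and the orientation picks one), and whichever is nonempty contributes exactly $m$. That weaker statement is all that is used downstream (equation (\ref{coefficient1}) in Theorem \ref{theo:Singularity} only needs the sum to equal one of the two character sums, each of which vanishes by Lemma \ref{charsum}), and it is also all the paper's own citations deliver — the paper's proof likewise only concludes ``$\sqrt{-4Nn}$ or $0$'' — so the overstatement is inherited from the lemma as written rather than introduced by you; but as a proof of the lemma as literally stated, the ``at least one is nonempty'' step is missing and cannot be supplied.
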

\begin{proof}
For any $w \in \Gamma_{0}(N) \setminus L_{\mu}[n]$ with $\delta_w \neq 0$, assume $w^\perp=\hbox{Span} \{ \ell_{\infty}, \ell\}$. From the definition of the $\delta_w$, this implies $ w \sim \ell_{\infty}$ or $-w \sim \ell_{\infty}$.

If  $\delta_{w}=1,$ then  $w \in L_{\mu, \ell_{\infty}}[n]$ or $- w \in L_{-\mu, \ell_{\infty}}[n]$.

If  $\delta_{w}=2$,    there exists $\gamma \in \Gamma_{0}(N)$ such that $\ell= \gamma \ell_{\infty}$. $w \in L_{\mu, \ell_{\infty}}[n]$($-w \in L_{-\mu, \ell_{\infty}}[n]$) implies that $-\gamma w \in L_{-\mu, \ell_{\infty}}[n]$($\gamma w \in L_{\mu, \ell_{\infty}}[n]$) respectively.

There is a map
\begin{equation}
f: \Gamma_{\ell_{\infty}}\setminus L_{\mu, \ell_{\infty}}[n]\bigcup \Gamma_{\ell_{\infty}}\setminus L_{-\mu, \ell_{\infty}}[n]\longrightarrow S\subseteq  \Gamma_{0}(N)\setminus L_{\mu}[n],
\end{equation}
given by if $w \in \Gamma_{\ell_{\infty}}\setminus L_{\mu, \ell_{\infty}}[n]$, $f(w)=w$; if $w \in \Gamma_{\ell_{\infty}}\setminus L_{-\mu, \ell_{\infty}}[n]$, $f(w)=-w$. Here $S$ is the subset $\{w \in \Gamma_{0}(N)\setminus L_{\mu}[n] \mid \delta_{w}\neq 0\}$.

Restriction on each subset $\Gamma_{\ell_{\infty}}\setminus L_{\mu, \ell_{\infty}}[n]$ or $ \Gamma_{\ell_{\infty}}\setminus L_{-\mu, \ell_{\infty}}[n]$, the map is injective. If $w, w^{\prime} \in \Gamma_{\ell_{\infty}}\setminus L_{\mu, \ell_{\infty}}[n]$ and $f(w)=f(w^{\prime})$, then there exists $\sigma \in \Gamma_{0}(N)$ such that $\sigma w= w^{\prime}$. It's need to keep orientation, so $\sigma \in \Gamma_{\ell_{\infty}}$, and then $w=w^{\prime} \in \Gamma_{\ell_{\infty}}\setminus L_{\mu, \ell_{\infty}}[n]$.

For $w \in S$, when $\delta_{w}=1$, there is only one preimage $w \in \Gamma_{\ell_{\infty}}\setminus L_{\mu, \ell_{\infty}}[n]$ or $- w \in \Gamma_{\ell_{\infty}}\setminus L_{-\mu, \ell_{\infty}}[n]$; when $\delta_{w}=2$, there are exactly two preimages $(w, -\gamma w)$ or $(\gamma w, - w) $ depends on $w \in L_{\mu, \ell_{\infty}}[n]$ or not.

From above discussions, counting both sides of $f$, one obtains
\begin{equation}
\sum_{w\in \Gamma_{0}(N)\setminus L_{\mu}[n]}\delta_{w}=\mid \Gamma_{\ell_{\infty}}\setminus L_{\mu, \ell_{\infty}}[n]\mid +\mid \Gamma_{\ell_{\infty}}\setminus L_{-\mu, \ell_{\infty}}[n]\mid.
\end{equation}
For any $\mu$, it's known that   $\mid \Gamma_{\ell_{\infty}}\setminus L_{\mu, \ell_{\infty}}[n]\mid=\sqrt{-4Nn}~ or ~0$(\cite[Section 3]{Fu}). From
 \cite[Lemma 6.2]{DY}, if $2\mu \notin L$
 $$\sum_{w\in \Gamma_{0}(N)\setminus L_{\mu}[n]}\delta_{w}=\sqrt{-4Nn}~ or ~0,$$
This proves the lemma.
\end{proof}
\begin{lemma}\label{charsum}
\begin{eqnarray}
 \sum_{w \in \Gamma_{\ell_{\infty}}\setminus L_{\mu, \ell_{\infty}}[n \Delta] }\chi_{\Delta}(w) =0.
\end{eqnarray}
\end{lemma}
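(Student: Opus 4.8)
The plan is to make the orbit set $L_{\mu,\ell_\infty}[n\Delta]$ completely explicit and then recognize the character sum as a sum of the nontrivial quadratic character $\varepsilon_\Delta$ over a full set of residue classes. First I would note that $\ell_\infty=\Q\kzxz{0}{1}{0}{0}$, so perpendicularity to $\ell_\infty$ forces the lower-left matrix entry of $w$ to vanish. Writing $w=\kzxz{b/2N}{-a/N}{c}{-b/2N}$ as in Section \ref{sect:TwistedDivisor}, the condition $w\sim\ell_\infty$ means $c=0$, and then $Q(w)=-b^2/4N$. Hence $Q(w)=n\Delta$ forces $b^2=-4Nn\Delta$, so $b=\sqrt{-4Nn\Delta}$ is a fixed integer (its sign pinned down by the orientation condition $w\sim\ell_\infty$), while $a$ ranges over $\Z$; the associated binary form is the degenerate form $[a,b,0]$. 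A direct computation of the conjugation action shows that the width-one stabilizer $\Gamma_{\ell_\infty}=\{\pm\kzxz{1}{m}{0}{1}:m\in\Z\}$ acts by $a\mapsto a+mb$, so that $\Gamma_{\ell_\infty}\backslash L_{\mu,\ell_\infty}[n\Delta]$ is parametrized by $a\bmod b$. This produces exactly $|b|=\sqrt{-4Nn\Delta}$ classes, consistent with the count in Lemma \ref{lemmacoset}.

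Next I would establish the divisibility $\Delta\mid b$. Since $b\equiv\mu\pmod{2N}$ we have $b^2/\Delta=-4Nn\equiv\mu^2\pmod{4N}$, so $b^2/\Delta$ is in particular a square modulo $4$. For each odd prime $p\mid\Delta$ we have $v_p(\Delta)=1$ and $p\mid b^2$, whence $p\mid b$. A short $2$-adic computation then forces $v_2(b)\ge v_2(\Delta)$: the remaining possibilities $v_2(b)=1$ with $v_2(\Delta)=2$, or $v_2(b)=2$ with $v_2(\Delta)=3$, would make $b^2/\Delta\equiv 3$ or $2\pmod 4$, contradicting that it is a square mod $4$. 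Combining the odd and even parts gives $\Delta\mid b$.

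Now I would evaluate $\chi_\Delta$ on the boundary forms. Taking the factorization $N_1=1,\ N_2=N$ in the definition of $\chi_\Delta$, the form $[a,b,0]$ represents the value $a$ at $(x,y)=(1,0)$; hence when $\gcd(a,\Delta)=1$ one may use this value and obtains $\chi_\Delta([a,b,0])=\big(\tfrac{\Delta}{a}\big)=\varepsilon_\Delta(a)$, well-definedness of the genus character \cite{GKZ} guaranteeing independence of the chosen representative. When $\gcd(a,\Delta)>1$ the primitivity condition $\gcd(a,b,\Delta)=1$ fails (here $\gcd(b,\Delta)=\Delta$ since $\Delta\mid b$), so $\chi_\Delta([a,b,0])=0=\varepsilon_\Delta(a)$. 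Thus $\chi_\Delta([a,b,0])=\varepsilon_\Delta(a)$ as a function of $a$, and since $\varepsilon_\Delta$ is periodic of period $\Delta$ with $\Delta\mid b$,
\[
\sum_{w\in\Gamma_{\ell_\infty}\backslash L_{\mu,\ell_\infty}[n\Delta]}\chi_\Delta(w)=\sum_{a\bmod b}\varepsilon_\Delta(a)=\frac{b}{\Delta}\sum_{a\bmod\Delta}\varepsilon_\Delta(a)=0,
\]
the last equality holding because $\varepsilon_\Delta$ is a nontrivial character modulo $\Delta$ for the fundamental discriminant $\Delta>1$.

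The hard part will be the identification of $\chi_\Delta$ on the degenerate (square-discriminant) boundary forms $[a,b,0]$: one must check that the generalized genus character of \cite{GKZ} is genuinely well defined on these cusp vectors and that the representative value $a$ is admissible, and one must secure the $2$-adic divisibility $\Delta\mid b$ in the even-discriminant case. That divisibility is exactly what is needed both for the primitivity bookkeeping and to turn the orbit sum into a whole number of periods of $\varepsilon_\Delta$; everything else is a routine character-sum vanishing.
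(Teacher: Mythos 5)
Your proof is correct and follows essentially the same route as the paper's: parametrize the $\Gamma_{\ell_\infty}$-orbits by a residue class modulo $b=\sqrt{-4Nn\Delta}$, observe that $\Delta\mid b$ (so that only the case $b=\Delta k'$ contributes), identify $\chi_\Delta$ on the degenerate forms $[a,b,0]$ with $\varepsilon_\Delta(a)$, and sum the nontrivial character over $b/\Delta$ full periods. You actually supply two details the paper leaves implicit — the $2$-adic verification that $\Delta\mid b$ for even fundamental discriminants, and the check that $\chi_\Delta([a,b,0])=\varepsilon_\Delta(a)$ also when $\gcd(a,\Delta)>1$ — so no gap remains.
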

\begin{proof}
The representatives for $\Gamma_{\ell_{\infty}}\setminus L_{\mu, \ell_{\infty}}[n \Delta]$
 is given by $$\bigg\{ \kzxz{k}{-j\beta_{\ell_{\infty}}}{}{-k}, j=0,..., 2kN-1\bigg\},$$ with $D=4N^2k^2$ and $\beta_{\ell_{\infty}}=\frac{1}{N}$.

 If $k$ is not of the form $\frac{\Delta k'}{2N}$ for some $k' \in \Z_{>0}$, then $\chi_{\Delta}(w)=0$ by  the definition of $\chi_{\Delta}$. So we could assume $k=\frac{\Delta k'}{2N}$. Then one has

 \begin{eqnarray}
 \sum_{w \in \Gamma_{\ell_{\infty}}\setminus L_{\mu, \ell_{\infty}}[n \Delta] }\chi_{\Delta}(w) = \sum_{j=0}^{2kN-1}\bigg(\frac{\Delta}{j}\bigg)
 =\sum_{j=0}^{\Delta k'-1}\bigg(\frac{\Delta}{j}\bigg)=0.
\end{eqnarray}
\end{proof}
The main purpose of this section is  to prove the following result.

\begin{theorem} \label{theo:Singularity} Let the notations be as above, and $n \in Q(\mu)+\Z$,  then
 $\Xi_{\Delta, r}(n , \mu, v)(z)$ is smooth at the cusps.

Moreover, let $0\ne \ell \in \Iso(V)$ be an isotropic vector and  $P_\ell$ be the associated cusp. Around the cusp ,  $$\lim_{q_{\ell}\rightarrow 0}\Xi_{\Delta, r}(n , \mu, v)(z) =0,$$
where  $q_\ell$ is a local parameter at the cusp $P_\ell$,
\end{theorem}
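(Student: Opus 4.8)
The plan is to reduce the smoothness and vanishing statement at each cusp to an understanding of the asymptotic behavior of the twisted Green function $\Xi_{\Delta, r}(n,\mu,v)(z)$ as $z$ approaches a cusp $P_\ell$. Since everything is $\Gamma_0(N)$-invariant and any cusp is $\SL_2(\Z)$-equivalent to $\ell_\infty$, I would first use $\sigma_\ell$ to move the analysis to the cusp at infinity, so that it suffices to study $\Xi_{\Delta, r}(n,\mu,v)(z)$ as $y = \Im(z) \to \infty$. The key analytic input is the decay of the function $\xi_\Delta(\sqrt v\, w, z) = \beta_1\bigl(2\pi R(w,z)_\Delta\bigr)$: from $\beta_1(r) = \int_1^\infty e^{-rt}t^{-1}\,dt = O(e^{-r})$ as $r\to\infty$, the only terms in the sum $\sum_{0\ne w\in L_{r\mu}[\Delta n]}\chi_\Delta(w)\,\xi_\Delta(\sqrt v\, w, z)$ that survive in the limit are those for which $R(w,z)_\Delta$ does \emph{not} grow, i.e. those $w$ whose orthogonal complement contains the isotropic line $\ell_\infty$ defining the cusp. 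These are exactly the vectors $w \sim \ell_\infty$ in Funke's sense, counted by $\delta_{w}$.

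The central step is therefore to isolate the contribution of the vectors $w$ with $\delta_{w,\ell}\ne 0$. For a vector $w$ orthogonal to $\ell_\infty$, $R(w,z)_\Delta$ tends to a finite (possibly zero) limit as $y\to\infty$, so $\xi_\Delta(\sqrt v\, w, z)$ does not decay; every other $w$ contributes a term that is $O(e^{-cy^2})$ and hence vanishes in the limit and causes no singularity. Collecting the boundary terms, the limiting value of $\Xi_{\Delta, r}(n,\mu,v)(z)$ at the cusp is a finite sum over $\Gamma_{\ell_\infty}\backslash L_{r\mu,\ell_\infty}[\Delta n]$ (together with the $-\mu$ piece), each term weighted by the genus character $\chi_\Delta(w)$. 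I would organize this using Lemma \ref{lemmacoset} to identify which cosets actually contribute and Lemma \ref{charsum}, which asserts precisely that the character sum $\sum_{w\in\Gamma_{\ell_\infty}\backslash L_{\mu,\ell_\infty}[\Delta n]}\chi_\Delta(w)=0$. The vanishing of this character sum is what forces both the smoothness (no logarithmic singularity survives because the potentially singular boundary contributions cancel) and the stated limit $\lim_{q_\ell\to 0}\Xi_{\Delta, r}(n,\mu,v)(z)=0$.

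Concretely, I would split into the two cases dictated by $-4Nn$. When $-4N n$ (equivalently the relevant discriminant $-4N\cdot \Delta n/\Delta$) is not a square, there are no isotropic-perpendicular vectors $w\sim\ell_\infty$ at all, so every term decays exponentially and the limit is trivially $0$ with smoothness immediate. When $-4Nn$ is a square, the boundary contribution is genuinely present term-by-term, and here the argument rests entirely on feeding the explicit coset representatives $\kzxz{k}{-j\beta_{\ell_\infty}}{}{-k}$ from the proof of Lemma \ref{charsum} into the limiting expression and invoking the vanishing $\sum_j \bigl(\tfrac{\Delta}{j}\bigr)=0$. I anticipate the main obstacle to be the careful bookkeeping of the limit of $\xi_\Delta(\sqrt v\, w, z)$ for the non-decaying terms: one must show that the $y\to\infty$ limit of $\beta_1(2\pi R(w,z)_\Delta)$ is a genuine constant independent of $w$ within each coset (so that it pulls out of the character sum), and confirm that the twist by $\chi_\Delta$ interacts correctly with the $\pm\mu$ decomposition from Lemma \ref{lemmacoset}. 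Once the limit is shown to factor as (nonzero constant)$\times$(character sum) and the character sum vanishes by Lemma \ref{charsum}, both assertions of the theorem follow at once.
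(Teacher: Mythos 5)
Your overall strategy is the paper's: isolate the vectors $w$ perpendicular to the isotropic line of the cusp, observe that all other terms decay, and kill the surviving boundary contribution with the character sums of Lemmas \ref{lemmacoset} and \ref{charsum}. However, there are three concrete gaps. First, the reduction to the cusp at infinity is not justified as you state it: $\Xi_{\Delta,r}(n,\mu,v)$ is only $\Gamma_0(N)$-invariant, and conjugating by an arbitrary scaling matrix $\sigma_\ell\in\SL_2(\Z)$ with $\sigma_\ell\ell=\ell_\infty$ does not preserve the lattice cosets or the character $\chi_\Delta$. The paper instead uses that, for $N$ square-free, the Atkin--Lehner involutions act transitively on the cusps and that $\chi_\Delta$ is Atkin--Lehner invariant, which gives $\alpha_{\Delta,r}(n,\mu,P_\ell)=\alpha_{\Delta,r}(n,\sigma_\ell\mu,P_{\ell_\infty})$; without some such invariance your reduction step fails. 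Second, your description of the boundary asymptotics is not correct: the set of $w\in L_{r\mu}[\Delta n]$ with $w\perp\ell_\infty$ is an infinite union of $\Gamma_{\ell_\infty}$-orbits of the form $\kzxz{k}{-j\beta_{\ell_\infty}}{0}{-k}$ with $j$ ranging over $\Z$, and for fixed $z$ the terms in such an orbit still decay in $j$; the danger is not that individual terms ``do not decay'' to a constant, but that the orbit sum grows linearly in $y$ (producing the $-\log|q_\ell|^2$ singularity) with leading coefficient proportional to $\beta_{3/2}(-4\pi nv)\,\alpha_{\Delta,r}(n,\mu,P_\ell)$, where $\alpha_{\Delta,r}(n,\mu,P_\ell)=\sum_{w}\chi_\Delta(w)\delta_{w,\ell}$. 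It is this coefficient, not a ``finite limit pulled out of the character sum,'' that Lemmas \ref{lemmacoset} and \ref{charsum} annihilate; and one still needs the statement (from the untwisted analysis of \cite{DY}) that the remaining smooth part $\psi_\Delta$ tends to $0$ to get the second assertion of the theorem.

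Third, you omit the case $n=0$ entirely, and it does not fit your dichotomy. There the relevant vectors include the isotropic ones lying in $\ell$ itself, the singular expansion acquires an additional $\log(-\log|q_\ell|^2)$ term, and the smooth remainder tends to nonzero constants $a_\ell$, $b_\ell$ rather than to $0$. Lemma \ref{charsum} does not apply (its representatives degenerate when $k=0$); the cancellation instead comes from the separate computation $\sum_{m=0}^{\Delta-1}\chi_\Delta(m\lambda_{\ell})=0$ over the cosets $\delta\in L^\sharp/L^\Delta$ with $\delta\equiv 0\ (L)$ meeting $\ell$, together with the fact that $\chi_\Delta(\delta)=0$ for $\delta\in L^\Delta$. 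This case needs its own argument and cannot be absorbed into your ``$-4Nn$ is a square'' branch.
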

\begin{proof}
 Let  $D=-4N \Delta n$
and we split the proof into three cases: $D$ is not a square, $D>0$ is a square and $D=0$.

{\bf Case 1: We first assume that  $D$ is not a square.} This  case follows  directly from the  \cite[Theorem 5.1]{DY}.

{\bf Case 2: Next we assume that $D>0$ is a square.} We first work on $X_{\Gamma_\Delta}$ as $\Xi_{L^\Delta}(n, \delta, v)$ is defined over $X_{\Gamma_\Delta}$. Let $P_\ell$ be the  cusp associated to an isotropic line $\ell \in \Iso(V)$. We have, near the cusp $P_\ell$,   by  \cite[Theorem 5.1]{DY} and (\ref{eq:TWGreen})
\begin{equation} \label{eq:TWCusp}
 \Xi_{\Delta, r}(n , \mu, v)(z)=-g_{\Delta, r}(n, \mu, v, P_{\ell})\log\mid q_{\ell}\mid^2-2\psi_\Delta(n, \mu, v; q_\ell),
 \end{equation}
 where
  $\psi_\Delta(n, \mu, v; q_\ell)$ is a smooth function of $q_\ell$ (as two real  variables $q_\ell$ and $\bar{q_\ell}$) and
 $$\lim_{q_\ell\rightarrow 0} \psi_\Delta(n, \mu, v; q_\ell)=0.$$
Here
$$
g_{\Delta, r}(n, \mu, v, P_{\ell})= \frac{1}{8 \pi  \sqrt{-nv}}\beta_{3/2}(-4 \pi nv ) \alpha_{\Delta, r}(n, \mu, P_\ell),
$$

$$
\alpha_{\Delta, r}(n, \mu, P_\ell)=\sum_{w \in \Gamma_0(N)\setminus L_{r\mu }[n \Delta]} \chi_{\Delta}(w) \delta_{w, \ell}
$$
%\begin{align*}
%\alpha_{\Delta, r}(n, \mu, P_\ell)
%&=\sum_{\substack{\delta \in L^{\Delta, \sharp}/L^\Delta \\ \delta \equiv r \mu %\mod (L^\Delta)}} \chi_{\Delta}(\delta)
%\sum_{w \in  L^{\Delta}_\delta[n] \mod \Gamma_\Delta }  \delta_{w, \ell}
%\end{align*}
and $0\le \delta_{w,\ell} \le 2$ is the number of  isotropic lines $ \ell_w \in \Iso(V)$  which is perpendicular to $w$ and belongs to the same cusp as $\ell$. Now all terms in (\ref{eq:TWCusp}) can be descended to modular curve $X_0(N)$.

Since  $N$ is square-free,   the Atkin-Lehner involutions act on the cusps of $X_0(N)$ transitively.
It's known that $\chi_{\Delta}$ is invariant under the Atkin-Lehner involutions, so
\begin{align*}
\alpha_{\Delta, r}(n, \mu, P_\ell)
&=\sum_{w\in \Gamma_{0}(N)\setminus L_{r\mu}[n \Delta ]}\chi_{\Delta}(w)\delta_{w, \ell}
\\
&=\sum_{w\in\Gamma_{0}(N)\setminus L_{r\sigma_{\ell}\mu }[n \Delta]}\chi_{\Delta}(w)\delta_{w, \ell_{\infty}}
\\
&=\alpha_{\Delta, r}(n, \sigma_{\ell}\mu, P_{\ell_{\infty}}).
\end{align*}
Here $\sigma_{\ell} \in \SL_{2}(\Z)$ is an Atkin-Lehner operator such that $\sigma_{\ell}\ell=\ell_{\infty}$ and $L_{r\sigma_{\ell}\mu }=L+r\sigma_{\ell}\mu$.  Therefore, it suffices to show $\alpha_{\Delta, r}(n, \mu, P_{\ell_{\infty}})=0$ for any $\mu$.

\begin{comment}
 There are two cases: $2r\mu \notin L$ and $2r\mu \in L$.

When $2r\mu \notin L$,
\begin{equation}
\alpha_{\Delta, r}(n, \mu, P_{\ell_{\infty}})=\sum_{w\in \Gamma\setminus L_{r\mu }[n \Delta]}\chi_{\Delta}(w)\delta_{w}.
\end{equation}

From the Lemma \ref{lemmacoset}, if $\Gamma\setminus L_{r\mu }[n\Delta]$ is not empty, then just only one of $\Gamma_{\ell_{\infty}}\setminus L_{r\mu, \ell_{\infty} }[n \Delta]$,  $\Gamma_{\ell_{\infty}}\setminus L_{-r\mu, \ell_{\infty}}[n \Delta]$ is nonempty. For this nonempty set,
$\delta_{w}=1$ (the other set $\delta_{w}=0$),
\end{comment}
Recall that $\delta_{w} =\delta_{w, \ell_{\infty}}$ is the number of  isotropic lines $ \ell\in \Iso(V)$  which is perpendicular to $w$ and belongs to the cusp $P_{\infty}$. For $w\in L_{r\mu}[n\Delta]$, $\delta_w =0$ unless $w \in L_{r\mu, \ell_\infty}[n\Delta]$ or $w \in L_{-r\mu, \ell_\infty}[n\Delta]$.

When $2r\mu \notin L$, $\delta_w=1$(see  \cite[Lemma 6.2]{DY}). From the Lemma \ref{lemmacoset}, one has
\begin{equation}\label{coefficient1}
\alpha_{\Delta, r}(n, \mu, P_{\ell_{\infty}})=\sum_{w\in \Gamma_{\ell_{\infty}}\setminus L_{r\mu, \ell_{\infty} }[n \Delta]}\chi_{\Delta}(w)~or~
  \sum_{w\in \Gamma_{\ell_{\infty}}\setminus L_{-r\mu, \ell_{\infty} }[n \Delta]}\chi_{\Delta}(w).
\end{equation}
According to Lemma \ref{charsum}, one has
\begin{equation}
\alpha_{\Delta, r}(n, \mu, P_{\ell_{\infty}})=0.
\end{equation}

When    $2r\mu \in L$, from the proof of Lemma \ref{lemmacoset},
 \begin{eqnarray}
 &&\alpha_{\Delta, r}(n, \mu, P_{\ell_{\infty}})\nonumber\\
 &=&\sum_{\delta_w=2, w \in \Gamma_{\ell_{\infty}}\setminus L_{r\mu, \ell_{\infty}}[n \Delta] }2\chi_{\Delta}(w)+\sum_{\delta_w=1, w \in \Gamma_{\ell_{\infty}}\setminus L_{r\mu, \ell_{\infty}}[n \Delta] }\chi_{\Delta}(w)\nonumber\\
 &+&\sum_{\delta_w=1, -w \in \Gamma_{\ell_{\infty}}\setminus L_{r\mu, \ell_{\infty}}[n\Delta] }\chi_{\Delta}(-w).\nonumber
 \end{eqnarray}

 Since $$\chi_{\Delta}(-w)=sgn(\Delta)\chi_{\Delta}(-w)=\chi_{\Delta}(w),$$ one has
 \begin{eqnarray}\label{coefficient2}
 \alpha_{\Delta, r}(n, \mu, P_{\ell_{\infty}})=\sum_{w \in \Gamma_{\ell_{\infty}}\setminus L_{r\mu, \ell_{\infty}}[n \Delta] }2\chi_{\Delta}(w).
 \end{eqnarray}

So from equation (\ref{coefficient2}) and Lemma \ref{charsum}, one obtains that
\begin{equation}
\alpha_{\Delta, r}(n, \mu, P_{\ell_{\infty}})=0.
\end{equation}
Then we complete the whole case $D>0$.

{\bf Case 3: We finally assume  $D=0$}. When $n=0$, we just need to consider the case $r\mu=0$.
Notice that if $r\mu\neq 0$, $L_{r\mu}[0]$ is empty.
From the  \cite[Theorem 5.1]{DY}, around cusp $P_{\ell}$
\begin{eqnarray}\label{zerocase}
&&\Xi_{\Delta, r}(0, \mu, v)\\
&=& -\sum_{\delta \in L^{\sharp}/L^{\Delta}, \delta\equiv 0(L), \atop \ell \cap L_{\delta}^\Delta\neq \phi} \chi_{\Delta}(\delta)\bigg(\frac{\varepsilon_{\ell}}{2 \pi \sqrt{vN}} (\log|q_\ell|^2)+2 \log(-\log|q_{\ell}|^2)\bigg)
    \nonumber\\
& &-2\sum_{\delta \in L^{\sharp}/L^{\Delta}, \delta\equiv 0(L), \atop \ell \cap L_{\delta}^\Delta\neq \phi}\chi_{\Delta}(\delta)\psi_\ell(0, \delta, v; q_\ell).\nonumber
\end{eqnarray}

Here $\varepsilon_{\ell}$ is the Funke constant of $\ell$ and $\psi_\ell(0, \delta, v; q_\ell)$ is smooth functions of $q_{\ell}$, and
\begin{equation}
\lim_{q_{\ell}\rightarrow 0}\psi_\ell(0, \delta, v; q_\ell)=\begin{cases} a_{\ell} &\ff \delta \in  L^\Delta\\
b_{\ell} & \ff \delta \notin L^\Delta
\end{cases}
\end{equation}
for some constant $a_{\ell}$ and $b_{\ell}$.  When $\delta \in L^\Delta$, $\chi_{\Delta}(\delta)=0$, so
\begin{equation}\label{zerolimit}
\lim_{q_{\ell}\rightarrow 0}\sum_{\delta \in L^{\sharp}/L^{\Delta}, \delta\equiv 0(L), \atop \ell \cap L_{\delta}^\Delta\neq \phi}\chi_{\Delta}(\delta)\psi_\ell(0, \delta, v; q_\ell)=\sum_{\delta \in L^{\sharp}/L^{\Delta},  \delta\equiv 0(L), \atop \ell \cap L_{\delta}^\Delta\neq \phi}\chi_{\Delta}(\delta)b_{\ell}.
\end{equation}
Combing it with equation (\ref{zerocase}), it suffices to consider $$\sum_{\delta \in L^{\sharp}/L^{\Delta}, \delta\equiv 0(L),\ell \cap L_{\delta}^\Delta\neq \phi}\chi_{\Delta}(\delta).$$

We assume that $\ell \cap L=\Z\lambda_{\ell}$, where $\lambda_{\ell}$ is the primitive element. So the representatives for
all $\delta \in L^{\sharp}/L^{\Delta}$ with $\delta\equiv r\mu\equiv 0(L)$ such that $\ell \cap L_{\delta}^\Delta\neq \phi$ are given by $$\{m\lambda_{\ell}, m=0,1,..., \Delta-1\}.$$
One has $$\sum_{\delta \in L^{\sharp}/L^{\Delta}, \delta\equiv0(L), \ell \cap L_{\delta}^\Delta\neq \phi} \chi_{\Delta}(\delta)=\sum_{m=0}^{\Delta-1} \chi_{\Delta}(m\lambda_{\ell})=0.$$
From the equation (\ref{zerocase}) and (\ref{zerolimit}), we know $\Xi_{\Delta, r}(0, \mu, v)$ is smooth around all cusps $P_{\ell}$ and goes to zero when $q_{\ell} \rightarrow 0$.
This finishes the proof of the theorem.
\end{proof}

Let
$$
Z_{\Delta, r}(n, \mu) = \begin{cases}
Z_{\Delta, r}(n, \mu) &\ff n >0,
  \\
    0 &\ff \hbox{ otherwise}.
    \end{cases}
$$

\begin{corollary}  \label{cor:GreenFunction} Let notations and assumption be as in  Theorem \ref{theo:Singularity}, then $\Xi_{\Delta, r}(n, \mu, v)$ is a Green function for $Z_{\Delta, r}(n, \mu) $   on $X_0(N)$ in the usual Gillet-Soul\'e sense, i.e.,
$$
dd^c [\Xi_{\Delta, r}(n, \mu,v) ]+ \delta_{Z_{\Delta, r}(n, \mu)} = [\omega_{\Delta, r}(n, \mu,v)].
$$

\end{corollary}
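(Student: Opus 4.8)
The plan is to promote the current equation already established on the open curve $Y_0(N)$ to a current equation on the compact modular curve $X_0(N)$, the only genuinely new content being the control of the behavior at the cusps. First I would record that the divisor $Z_{\Delta, r}(n, \mu)$ is supported entirely in the interior $Y_0(N)$: each point $Z(w)$ arises from a vector $w$ with $Q(w) = n\Delta > 0$, hence lies in $\Gamma_0(N)\backslash \D$ and at no cusp, while for $n \le 0$ the divisor is empty. Consequently the delta current $\delta_{Z_{\Delta, r}(n, \mu)}$ and the logarithmic singularities of $\Xi_{\Delta, r}(n, \mu, v)$ along it are purely interior phenomena, so the Gillet--Soul\'e condition on the type of singularity is inherited verbatim from the earlier Lemma that gives the current equation on $Y_0(N)$.

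Next I would check that the smooth $(1,1)$-form $\omega_{\Delta, r}(n, \mu, v)$ extends smoothly across the cusps as a form on $X_0(N)$, so that the current $[\omega_{\Delta, r}(n, \mu, v)]$ it defines makes sense on the whole compact surface. This follows from the rapid decay of the Kudla--Millson theta kernel at the cusps, namely the estimate $O(e^{-Cy^2})$ recorded in Section \ref{sectheta} from \cite[Proposition 4.1]{BF1}, which forces the defining sum for $\omega_{\Delta, r}(n, \mu, v)$ to extend to a smooth form vanishing at the cusps.

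The heart of the matter is to verify that, as currents on $X_0(N)$, the difference $dd^c[\Xi_{\Delta, r}(n, \mu, v)] + \delta_{Z_{\Delta, r}(n, \mu)} - [\omega_{\Delta, r}(n, \mu, v)]$, which vanishes on $Y_0(N)$ by the Lemma, carries no extra distributional mass concentrated at the cusps. For this I would run an exhaustion argument: fix a smooth test function $\eta$ on $X_0(N)$, excise small coordinate disks $|q_\ell| < \epsilon$ around each cusp $P_\ell$, apply Green's identity on the complement $X_0(N) \setminus \bigcup_\ell \{|q_\ell| < \epsilon\}$, and let $\epsilon \to 0$. The boundary contributions are integrals over the circles $|q_\ell| = \epsilon$ of expressions bilinear in $(\Xi_{\Delta, r}, \eta)$ and their first derivatives. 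By Theorem \ref{theo:Singularity}, $\Xi_{\Delta, r}(n, \mu, v)$ is smooth in the local parameter $q_\ell$ and satisfies $\lim_{q_\ell \to 0} \Xi_{\Delta, r}(n, \mu, v) = 0$; hence $\Xi_{\Delta, r}$ and its derivatives stay bounded while the circumference of $|q_\ell| = \epsilon$ tends to $0$, so every boundary term vanishes in the limit. This shows no delta current appears at the cusps and yields the asserted identity globally on $X_0(N)$. The case $n \le 0$ is identical, with $\delta_{Z_{\Delta, r}(n, \mu)} = 0$ and $\Xi_{\Delta, r}$ smooth throughout $Y_0(N)$.

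The main obstacle is exactly this cusp analysis, i.e.\ ensuring that passing from the open to the compact curve introduces no spurious singular currents. However, the substantive work has already been carried out in Theorem \ref{theo:Singularity}, which guarantees both the smoothness and the vanishing of $\Xi_{\Delta, r}(n, \mu, v)$ at every cusp; once those inputs are in hand the exhaustion argument above is routine, and the corollary follows.
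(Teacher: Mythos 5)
Your proposal is correct and follows the same route the paper takes: the corollary is an immediate consequence of the current equation on $Y_0(N)$ established in the lemma following the definition of the twisted Green functions, combined with the smoothness and vanishing of $\Xi_{\Delta, r}(n,\mu,v)$ at the cusps from Theorem \ref{theo:Singularity}. The paper leaves the corollary unproved as an evident consequence of these two inputs; your exhaustion/Green's identity argument simply makes explicit the routine verification that no distributional mass concentrates at the cusps.
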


Bruinier and Ono constructed automorphic Green function for divisor $Z_{\Delta, r}(f)$ in the work \cite{BruOno}, where $f$ is the weight $1/2$ harmonic weak Maass form.  When take some special $f$, one could get the Green function $\Phi_{\Delta, r}(n, \mu)$ for divisor $Z_{\Delta, r}(n, \mu)$. Similarly as recent work of Ehlen and Sankaran \cite{ES}, we could ask a question  if $$\sum_{\mu}\sum_{n}\Phi_{\Delta, r}(n, \mu)q^ne_\mu-\sum_{\mu}\sum_{n}\Xi_{\Delta, r}(n, \mu, v)q^ne_\mu$$ is a modular form. They didn't prove case associated to modular curve in that paper.

\section{Twisted arithmetic theta function} \label{secmainresult}

In this section, we assume that $N$ is square free.

Following \cite{KM}, let $\mathcal{Y}_{0}(N)$ $ ( \mathcal{X}_{0}(N) )$ be the moduli stack over $\Z$ of cyclic isogenies of degree $N$ of elliptic
curves (generalized elliptic curves) $\pi : E\rightarrow E^{\prime}$, such that $\ker \pi$ meets every irreducible component of each geometric fiber.
The stack $\mathcal{X}_{0}(N)$ is regular and proper flat over $\Z$ and  $\mathcal{X}_{0}(N)(\C)=X_{0}(N)$. It is a DM-stack.  It is regular over $\Z$ and smooth over $\Z[\frac{1}N]$.

When $p|N$, the special fiber  $\mathcal X_0(N) \pmod p$ has two irreducible components $\mathcal X_p^\infty$ and $\mathcal X_p^0$. Let $\mathcal X_p^\infty$($\mathcal X_p^0$ ) be the component which contains the cusp $\mathcal P_\infty \pmod p$($\mathcal P_0 \pmod p$).
Here $\mathcal P_\infty$ and $\mathcal P_0$ are Zariski closure of cusp infinity and zero.
Let $\mathcal{Z}_{\Delta, r}(n, \mu)$ be the Zariski closure of $Z_{\Delta, r}(n, \mu)$.

When $\Delta=1$, $\mathcal{Z}_{\Delta, r}(n, \mu)$ has a moduli interpretation, one could check details in \cite[Section 6]{DY}.

We  define arithmetic divisor in $\widehat{\CH}^1_\R(\mathcal X_0(N))$ by
\begin{equation}
\widehat{\mathcal{Z}}_{\Delta, r}(n, \mu, v)=(\mathcal{Z}_{\Delta, r}(n, \mu), \Xi_{\Delta, r}(n , \mu, v)).
\end{equation}

The twisted arithmetic theta function ($q=e(\tau)$) is defined to be
\begin{equation} \label{eq:twistedtheta}
\widehat{\phi}_{\Delta, r}(\tau) = \sum_{\substack{ n \equiv Q(\mu) (\mod \Z)\\
                          \mu \in L^\sharp/L
                          }}
                           \widehat{\mathcal Z}_{\Delta, r}(n, \mu, v) q^n e_\mu  \in \widehat{\CH}^1_\R(\mathcal X_0(N))\otimes \C[L^\sharp/L][[q, q^{-1}]].
\end{equation}
\subsection{Arithmetic intersection}\label{sect:ArithIntersectionReview}
 The Gillet-Soul\'e intersection theory (see \cite{SouleBook}) has been extended to arithmetic divisors with log-log singularities  and equivalently metrized bundles with log singularities (\cite{BKK}, \cite{Kuhn}, \cite{Kuhn2}).  We will use K\"uhn's settings in \cite{Kuhn} as in  \cite{DY} and we refer to \cite{Kuhn} for details.

Let S=\{cusps\} and define $\widehat{\Pic}_\R(\mathcal X_{0}(N), S)$ be the group of metrized line bundles with log singularity along $S$ with $\R$-coefficients.

For an arithmetic divisor $\widehat{\mathcal Z}=(\mathcal Z, g)$ with  log-log-singularity along $S$,  $g$ is a smooth function on $X_{0}(N)\setminus \{\mathcal Z(\C)\cup S \}$, and satisfying the following conditions:
 \begin{align*}
 dd^c g& + \delta_{\mathcal Z(\C)} =[\omega],
 \\
 g(t_j) = -2 \alpha_j \log (-\log(|t_j|^2)&-2 \beta_j \log |t_j| -2 \psi_j(t_j)  \quad \hbox{near  } S_j,
 \end{align*}
 for some smooth function  $\psi_j$ and some $(1, 1)$-form $\omega$ which is smooth away from $S$. Here $t_j$ is local parameter.

We could view  the metrized line bundle $\widehat{\mathcal L}=(\mathcal L,  \| \, \|)$ as an arithmetic divisor $\widehat{\mathcal Z}=(div(s), -\log\| s\|^2)$ with canonical section $s$.

Around cusp $S_{j}$,
    $$
    \| s(t_j)\| = (- \log|t_j|^2)^{\alpha_j} |t_j|^{\ord_{S_j}(s)} \varphi(t_j).
    $$

Then one obtains that
 \begin{equation}\label{index}
 \alpha_j(g) =\alpha_j(s), \quad \beta_j(g) =\ord_{S_j}(s),  \quad \psi_j(t_j) = \log \varphi (t_j).
 \end{equation}
   Define   $\widehat{\CH}_\R^1(\mathcal X_0(N), S)$  be the quotient of the $\R$-linear combination of the
 arithmetic divisors of $\mathcal X$ with log-log growth along $S$ by $\R$-linear combinations of the principal arithmetic divisors with log-log growth along $S$.
One has $\widehat{\Pic}_\R(\mathcal X_0(N), S) \cong \widehat{\CH}_\R^1(\mathcal X_0(N), S)$.

From \cite[Proposition 1.4]{Kuhn}, there is an extended  height paring \cite[Proposition 4.1]{DY}
$$
\widehat{\CH}_\R^1(\mathcal X_0(N), S) \times \widehat{\CH}_\R^1(\mathcal X_0(N), S) \rightarrow \R,
$$
such that if $\mathcal Z_1$ and $\mathcal Z_2$ are   divisors  intersect properly, then
$$
\langle (\mathcal Z_1, g_1),  (\mathcal Z_2, g_2)  \rangle
=(\mathcal Z_1.\mathcal Z_2)_{fin} + \frac{1}2 g_1*g_2.
$$

The degree map is given by
\begin{equation} \label{eq:Degree}
\deg:  \widehat{\CH}_\R^1(\mathcal X_0(N), S)\rightarrow \R, \quad \deg (\mathcal Z,g) = \int_{X} \omega = \langle (\mathcal Z,g), (0, 2) \rangle.
\end{equation}
  It is  $\deg Z$ when $g$ is a Green function  without log-log singularity.

Define $\widehat{\CH}_\R^1(\mathcal X_0(N)):=  \widehat{\CH}_\R^1(\mathcal X_0(N), \hbox{empty})$, which is  the usual arithmetic Gillet-Soul\'e Chow group.

\subsection{The metrized Hodge bundle }
Let $\omega_N$ be the Hodge bundle on $\mathcal X_0(N)$ (see \cite{KM}). Then there is  an isomorphism $\omega_N^2 \cong \Omega_{\mathcal X_0(N)/\Z}(-S)$, which is canonically isomorphic to the line bundle of modular forms of weight $2$ for $\Gamma_0(N)$.

The normalized Petersson  metric of line bundle of modular form  $\mathcal M_k(\Gamma_0(N))$, which is given by
$$
\| f(z)\| = |f(z) (4 \pi e^{-C} y)^{\frac{k}2}|,
$$
where $k$ is the weight, $C=\frac{\log 4\pi +\gamma}{2}$ and $\gamma$ is Euler constant. We denote this metric line bundle by $\widehat{\mathcal M}_k(\Gamma_0(N))$.  This  metrized induces a metric on $\omega_N$, and we denote this metrized line bundle by $\widehat{\omega}_N$.  We will  identify
 $\widehat{\omega}_N^k\cong \widehat{\mathcal M}_k(\Gamma_0(N))$, under which
  $\widehat{\omega}_N$ becomes  the class of arithmetic divisor \begin{equation}\frac{1}k \widehat{\Div}(\Delta_N)=\frac{1}{k}(\Div\Delta_{N}, -\log \| \Delta_N(z) \|^2).
\end{equation}
Here the modular form $\Delta_N(z)$  of weight $k= 12 \varphi(N)$ is given by (\ref{eq:DeltaN}).
We need the following  two lemmas in \cite[Section 6]{DY}.

\begin{lemma}\label{divisorlemma}
\begin{equation}
\Div\Delta_{N}=\frac{rk}{12}\mathcal{P}_{\infty}-k \sum_{p|N}\frac{p}{p-1}\mathcal{X}_{p}^{0},
\end{equation}
where $$ r=N\prod_{p|N} (1+p^{-1})=[\SL_2(\Z): \Gamma_0(N)].$$
Here $\mathcal{P}_{\infty}$ is the Zariski closure of  cusp $\infty$ and $\mathcal{X}_{p}^{0}$ is a vertical component of $\mathcal{X}_{0}(N)$.
\end{lemma}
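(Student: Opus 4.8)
The plan is to rewrite the exponents in closed form and then treat the horizontal and vertical parts of $\Div\Delta_N$ separately. Since $N$ is square-free, $\varphi(N)/\varphi(N/r)=\varphi(r)$ for $r\mid N$, and for $r\mid t\mid N$ one has $\mu(N/r)=\mu(N/t)\mu(t/r)$; hence
\[
a(t)=\sum_{r\mid t}\mu(t/r)\mu(N/r)\varphi(r)=\mu(N/t)\sum_{r\mid t}\varphi(r)=t\,\mu(N/t),
\]
so $\Delta_N=\prod_{t\mid N}\Delta(tz)^{t\mu(N/t)}$ and $\sum_{t\mid N}12\,a(t)=12\varphi(N)=k$ recovers the weight.

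For the horizontal part I would use Ligozat's formula for the order of an eta quotient at a cusp $a/c$ with $c\mid N$; for $\Delta(tz)=\eta(tz)^{24}$ and square-free $N$ it gives order $N\gcd(c,t)^2/(ct)$, so
\[
\ord_{c}(\Delta_N)=\frac{N}{c}\sum_{t\mid N}\mu(N/t)\gcd(c,t)^2 .
\]
The inner sum is multiplicative in the primes $p\mid N$, with local factor $p^2-1$ when $p\mid c$ and $0$ when $p\nmid c$; it therefore vanishes unless every $p\mid N$ divides $c$, i.e. unless $c=N$ (the cusp $\infty$), where it equals $\prod_{p\mid N}(p^2-1)=rk/12$. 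Thus the generic fibre divisor of $\Delta_N$ is supported only at $\infty$ to order $rk/12$, and the horizontal part of $\Div\Delta_N$ is $\frac{rk}{12}\mathcal P_\infty$.

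For the vertical part, write $\Div\Delta_N=\frac{rk}{12}\mathcal P_\infty+\sum_{p\mid N}\big(a_p\,\mathcal X_p^\infty+b_p\,\mathcal X_p^0\big)$. Each factor $\Delta(tz)=q^t\prod_{n\ge1}(1-q^{tn})^{24}$ has an integral $q$-expansion with leading coefficient $1$, and the same holds for $\Delta(tz)^{-1}$; hence $\Delta_N=q^{rk/12}(1+O(q))$ with integral coefficients, so it does not vanish modulo $p$ along the component $\mathcal X_p^\infty$ containing the reduction of the cusp $\infty$, giving $a_p=0$. To compute $b_p$ I would analyse each factor separately, using that $\Delta$ has no vertical divisor on $\mathcal X(1)/\Z$ and that $\Delta(tz)$ is its pullback under the degeneracy map $z\mapsto tz$, corresponding to the quotient $E\to E_t=E/C_t$ by the order-$t$ subgroup of the universal cyclic kernel. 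When $p\nmid t$ this isogeny is étale at $p$ and contributes nothing; when $p\mid t$ its $p$-part is a single purely inseparable (Frobenius) step whose vertical defect lies entirely on $\mathcal X_p^0$ (consistent with $a_p=0$) and contributes a fixed order $\ell_p$ to $\Delta(tz)$ there, independent of $t$. Summing,
\[
b_p=\ell_p\sum_{\substack{t\mid N\\ p\mid t}}t\,\mu(N/t)=\ell_p\,p\,\varphi(N/p),
\]
and with $\ell_p=-12$ this equals $-12\,p\,\varphi(N/p)=-12\varphi(N)\tfrac{p}{p-1}=-k\tfrac{p}{p-1}$, using $\varphi(N)=(p-1)\varphi(N/p)$, as claimed.

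The main obstacle is pinning down $\ell_p=-12$, i.e. the sign and exact order of the inseparability defect of the Hodge-bundle comparison $\omega_{E_t}\to\omega_E=\omega_N$ along $\mathcal X_p^0$. I would establish this from the Deligne--Rapoport/Katz--Mazur local description of the two components at $p$ and the behaviour of invariant differentials under Frobenius, or equivalently by reducing to the case $N=p$ (where $\Delta_N=\Delta(pz)^p/\Delta(z)$) and extracting $b_p$ from the $q$-expansion of $\Delta_N\mid_k W_p$ at the cusp $0$; in the latter route one must carefully track the width of the cusp $0$ and the correct integral trivialisation of $\omega_N$, which is the delicate bookkeeping of the argument.
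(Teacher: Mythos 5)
Your computation $a(t)=t\mu(N/t)$ and the Ligozat evaluation of the cusp orders are both correct, and for the horizontal part your route is actually more systematic than the paper's: the paper only exhibits the $q$-expansions at the two cusps $\infty$ and $0$ (via $\Delta_N\mid_k\kzxz{0}{-1}{1}{0}=\prod_{t\mid N}t^{-12a(t)}\Delta(z/t)^{a(t)}$, using $\sum_t a(t)=\varphi(N)$ to cancel the automorphy factor), whereas your multiplicative local-factor argument shows directly that every cusp with $c\neq N$ receives order $0$. Your argument that $\mathcal{X}_p^\infty$ carries no multiplicity (unit content of the integral $q$-expansion at $\infty$, noting that $\Delta(tz)^{\pm 1}$ both have integral expansions with leading coefficient $1$) is also sound.

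The genuine gap is the constant $\ell_p=-12$, which you assert and defer; it is correct, but of your two proposed routes the moduli-theoretic one is the risky one: which of the two components of $\mathcal{X}_0(N)\bmod p$ carries the inseparable (Frobenius) step of the degeneracy isogeny depends on the Katz--Mazur convention for the cusp $\infty$, and getting it backwards would place the defect on $\mathcal{X}_p^\infty$ and contradict your own $a_p=0$. Your second route is the safe one and is exactly the paper's method, applied factor by factor instead of to the whole product: $\Delta(tz)\mid_{12}\kzxz{0}{-1}{1}{0}=z^{-12}\Delta(-t/z)=t^{-12}\Delta(z/t)$, and $\Delta(z/t)=\Delta\bigl((N/t)\cdot z/N\bigr)$ has unit content in the parameter $q_N=e(z/N)$ at the width-$N$ cusp $0$; hence the content there is $t^{-12}$ and the order along $\mathcal{X}_p^0$ is $-12$ when $p\mid t$ and $0$ otherwise. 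With that one line inserted, your bookkeeping $b_p=-12\sum_{p\mid t\mid N}t\mu(N/t)=-12p\varphi(N/p)=-k\tfrac{p}{p-1}$ agrees with the paper's $-12\varphi(N)-12\varphi(N/p)$, and the proof is complete.
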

\begin{comment}
\begin{proof} Since
\begin{eqnarray}
\Delta_{N}\mid {\kzxz{0}{-1}{1}{0}}(z)=N^{-6\varphi(N)}\Delta_{N}^{0}(\frac{z}{N})=N^{-12\varphi(N)}\Pi_{t\mid N}t^{12a(\frac{N}{t})}\Delta(\frac{tz}{N})^{a(\frac{N}{t})},\nonumber
\end{eqnarray}
we have
$$
\hbox{Div}\Delta_{N}=\frac{rk}{12}\mathcal{P}_{\infty}+\sum_{p \mid N}(-12\varphi(N)+12\sum_{M\mid \frac{N}{p}}a(M))\mathcal{X}_{p}^{0}.
$$
  One has  by (\ref{Deltapower})
\begin{equation}\sum_{M\mid \frac{N}{p}}a(M)= -\varphi (\frac{N}{p}),\end{equation}
so
$$
\Div\Delta_{N}=\frac{rk}{12}\mathcal{P}_{\infty}+\sum_{p \mid N}(-12\varphi(N)-12 \varphi (\frac{N}{p}) )\mathcal{X}_{p}^{0}.
$$
Notice
 $\varphi(\frac{N}p) =\frac{1}{p-1}\varphi(N) $,  one has
 $$
 \Div\Delta_{N}=\frac{rk}{12}\mathcal{P}_{\infty} -k \sum_{p|N}\frac{p}{p-1}\mathcal{X}_{p}^{0}
 $$
 as claimed. The second identity follows the same way from Corollary \ref{cor:DeltaN0} and is left to the reader.

Recall  $$\Delta_{N}^{0}(z)=N^{-6\varphi(N)}\Pi_{t\mid N}t^{12a(\frac{N}{t})}\Delta(tz)^{a(\frac{N}{t})}$$  and
$$\Delta_{N}^{0}\mid _{\kzxz{0}{-1}{1}{0}}(z)=N^{-6\varphi(N)}\Delta_{N}(\frac{z}{N}),$$
one has
\begin{eqnarray}
\Div\Delta_{N}^{0}&=&\frac{rk}{12}\mathcal{P}_{0}+\sum_{p \mid N}(-6\varphi(N)+12\sum_{M\mid \frac{N}{p}}a(M))\mathcal{X}_{p}^{\infty}-6\varphi(N)\sum_{p \mid N}\mathcal{X}_{p}^{0}\nonumber\\
&=&\frac{rk}{12}\mathcal{P}_{0}+\sum_{p \mid N}(-6\varphi(N)-12 \varphi (\frac{N}{p}))\mathcal{X}_{p}^{\infty}-6\varphi(N)\sum_{p \mid N}\mathcal{X}_{p}^{0}.\nonumber
\end{eqnarray}

\end{proof}
\end{comment}

\begin{lemma} \label{lem:Delta_N}  Let   $q_z=e(z) $   be  a local parameter of $X_0(N)$ at the cusp $P_\infty$.
The metrized line bundle $\widehat{\omega}^k =\widehat{\mathcal M}_k(N)$ has log singularity along cusps with all $\alpha$-index $\alpha_P=\frac{k}2$ at every cusp $P$. At the cusp $P_\infty$, one has
    $$
    \|\Delta_N(z) \| = (-\log|q_z|^2)^{\frac{k}2}|q_z|^{\frac{r}{12} k} \varphi(q_z),
$$
with
$$
\varphi(q_z) = e^{-\frac{kC}2} \prod_{n=1}^\infty |(1- q_z)^{24 C_N(n)}|.
$$

%Similarly for $\Delta_N^0(z)$ at the cusp $P_0$ ({\bf put it here if we need it})

Indeed,  $\widehat{\Div}(\Delta_N)=(\Div (\Delta_N), -\log\| \Delta_N(z) \|^2)$ is an arithmetic  divisor with log-log singularity at cusps.

\end{lemma}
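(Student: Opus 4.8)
The plan is to read off the local behaviour of $\|\Delta_N(z)\|$ at $P_\infty$ directly from the product expansion of $\Delta_N$ and the normalized Petersson metric, and then to transfer the conclusion to the remaining cusps by invariance. First, recall the classical expansion $\Delta(z)=q_z\prod_{n\ge 1}(1-q_z^n)^{24}$ with $q_z=e(z)$. Substituting $tz$ and multiplying over $t\mid N$ with exponents $a(t)$, the definition (\ref{eq:DeltaN}) gives
\begin{equation}
\Delta_N(z)=q_z^{\sum_{t\mid N}t\,a(t)}\prod_{n\ge 1}(1-q_z^n)^{24\,C_N(n)},
\end{equation}
where $C_N(n)$ is obtained by collecting, for each $n$, the exponents of the factors $(1-q_z^n)$. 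By Lemma \ref{divisorlemma} the horizontal part of $\Div\Delta_N$ is $\frac{rk}{12}\mathcal P_\infty$, so $\ord_{P_\infty}\Delta_N=\sum_{t\mid N}t\,a(t)=\frac{rk}{12}$; this pins down the leading power $|q_z|^{\frac{r}{12}k}$.

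Next I would translate the archimedean metric factor into the cusp parameter. From $|q_z|=e^{-2\pi y}$ we get $y=\frac{-\log|q_z|^2}{4\pi}$, whence
\begin{equation}
(4\pi e^{-C}y)^{k/2}=e^{-kC/2}\,(-\log|q_z|^2)^{k/2}.
\end{equation}
Multiplying $|\Delta_N(z)|$ by this factor produces precisely
\begin{equation}
\|\Delta_N(z)\|=(-\log|q_z|^2)^{k/2}\,|q_z|^{\frac{r}{12}k}\,\varphi(q_z),\qquad \varphi(q_z)=e^{-kC/2}\prod_{n\ge 1}\bigl|(1-q_z^n)^{24\,C_N(n)}\bigr|,
\end{equation}
which is the asserted formula. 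Since $\varphi(q_z)\to e^{-kC/2}\ne 0$ as $q_z\to 0$, the function $\log\varphi$ is smooth and bounded at the cusp, so comparison with the normal form in (\ref{index}) identifies the $\alpha$-index at $P_\infty$ as $\alpha_{P_\infty}=k/2$.

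To obtain $\alpha_P=k/2$ at an arbitrary cusp $P$, I would use that $\|f(z)\|=|f(z)|\,(4\pi e^{-C}y)^{k/2}$ is $\Gamma_0(N)$-invariant (the weight-$k$ automorphy of $f$ cancels against $\Im(\gamma z)=y\,|j(\gamma,z)|^{-2}$) and that the Petersson norm is defined $\SL_2(\R)$-invariantly. Normalizing $P$ to $\infty$ by a width-$h_P$ scaling matrix, the norm is computed from the $q_P$-expansion of the resulting modular form at $\infty$, and the imaginary part of the new variable grows like a positive multiple of $-\log|q_P|^2$. Hence the metric factor again contributes $(-\log|q_P|^2)^{k/2}$ up to a nonzero constant, giving $\alpha_P=k/2$ uniformly, while the modular form supplies only an integer power of $|q_P|$ (its order) together with a nonvanishing holomorphic unit. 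Writing $g=-\log\|\Delta_N\|^2=-2\cdot\tfrac{k}{2}\log(-\log|q_P|^2)-2\,\ord_P(\Delta_N)\log|q_P|-2\log\varphi_P$ then exhibits exactly the log-log normal form of \S\ref{sect:ArithIntersectionReview} with $\alpha_P=k/2$, so $\widehat{\Div}(\Delta_N)=(\Div(\Delta_N),-\log\|\Delta_N(z)\|^2)$ is an arithmetic divisor with log-log singularity at the cusps.

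The main obstacle is the uniform control at the cusps other than $P_\infty$, where no explicit product is available: one must make the invariance-and-width argument precise and verify that, after normalizing each cusp to infinity, the non-$\alpha$ part of the metric is a genuinely smooth, nonvanishing unit, so that the smooth function $\psi_P=\log\varphi_P$ and the common index $\alpha_P=k/2$ appear at every cusp. All remaining steps are routine manipulations of the $q$-expansion and of the normalized metric.
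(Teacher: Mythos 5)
Your computation is correct, and in fact the paper gives no proof of this lemma at all: it is imported verbatim from \cite[Section 6]{DY} (``We need the following two lemmas in \cite[Section 6]{DY}''), so there is nothing internal to compare against. Your argument is the standard one and does what a proof must do here: the product expansion $\Delta(tz)=q_z^{t}\prod_{n\ge1}(1-q_z^{tn})^{24}$ gives $\ord_{q_z}\Delta_N=\sum_{t\mid N}t\,a(t)$, which indeed equals $\varphi(N)\sigma(N)=\tfrac{rk}{12}$ for $N$ square-free, and the substitution $y=\tfrac{-\log|q_z|^2}{4\pi}$ converts $(4\pi e^{-C}y)^{k/2}$ into $e^{-kC/2}(-\log|q_z|^2)^{k/2}$, yielding exactly the displayed normal form. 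Note that your $\varphi(q_z)=e^{-kC/2}\prod_n|(1-q_z^{n})^{24C_N(n)}|$ silently corrects a typo in the statement, where $(1-q_z)$ should read $(1-q_z^{n})$.

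The one point that genuinely needs care is the claim $\alpha_P=\tfrac{k}{2}$ at the cusps other than $P_\infty$, and you handle it correctly: for a scaling matrix $\sigma$ with $\sigma(\infty)=P$ of width $h_P$ and $q_P=e(\sigma^{-1}z/h_P)$, one has $\Im(\sigma^{-1}z)=\tfrac{-h_P\log|q_P|^2}{4\pi}$, so the metric factor contributes $(h_Pe^{-C})^{k/2}(-\log|q_P|^2)^{k/2}$ while $\Delta_N|_k\sigma$ contributes an integral power of $q_P$ times a nonvanishing unit; hence $\alpha_P=\tfrac{k}{2}$ uniformly and $\psi_P$ is smooth with $\psi_P(0)$ finite, which is precisely K\"uhn's log-log normal form recalled in (\ref{index}). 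I see no gap.
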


\subsection{Main results}
Firstly, we prove the following proposition which is an analogue of  \cite[Proposition 6.7]{DY}.

\begin{proposition}  \label{vertical} Let $\Delta >1$, for every prime $p|N$, one has
$$
\langle \widehat{\phi}_{\Delta, r}(\tau), \mathcal X_p^0 \rangle =\langle \widehat{\phi}_{\Delta, r}(\tau), \mathcal X_p^{\infty} \rangle =0.
$$
\end{proposition}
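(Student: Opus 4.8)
The plan is to reduce the arithmetic pairing to a purely $p$-adic geometric intersection number and then to show that this number vanishes by a character-sum argument that mirrors the cusp analysis of Theorem~\ref{theo:Singularity}. First I would view $\mathcal{X}_p^0$ and $\mathcal{X}_p^{\infty}$ as vertical arithmetic divisors with zero Green function, i.e. $(\mathcal{X}_p^{\bullet},0)$. Their complex locus is empty, so the associated current and curvature form vanish, and in K\"uhn's height pairing the star-product term drops out. Hence, coefficient by coefficient,
\[
\langle \widehat{\mathcal Z}_{\Delta, r}(n, \mu, v), \mathcal X_p^{\bullet} \rangle = \big(\mathcal{Z}_{\Delta, r}(n, \mu)\cdot \mathcal X_p^{\bullet}\big)_{\mathrm{fin}},
\]
a sum of local terms supported only at $p$. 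For $n\le 0$ the horizontal part $\mathcal Z_{\Delta,r}(n,\mu)$ is zero and the coefficient vanishes trivially, so I may assume $n>0$. I would also record, as in the $\Delta=1$ case, that the CM divisor $\mathcal Z_{\Delta,r}(n,\mu)$ meets the fibre properly and lies in the regular locus away from the crossing (supersingular) points, so the local multiplicities are well defined.

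Next I would establish a sum relation across the two components. Since $N$ is squarefree and $p\| N$, the special fibre is reduced and $\mathcal X_p^{\infty}+\mathcal X_p^0=\Div(p)$ as a vertical cycle. Because a horizontal divisor meets a full fibre in its generic degree, this gives
\[
\big(\mathcal{Z}_{\Delta, r}(n, \mu)\cdot \mathcal X_p^{\infty}\big)_{p}+\big(\mathcal{Z}_{\Delta, r}(n, \mu)\cdot \mathcal X_p^{0}\big)_{p}=\deg Z_{\Delta, r}(n, \mu)=0,
\]
the last equality being the degree identity of Theorem~\ref{Main} for $\Delta>1$ (equivalently Corollary~\ref{thetaintegral}). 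Consequently it suffices to prove that \emph{one} of the two component intersections vanishes; the other then vanishes automatically.

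For that individual vanishing I would expand
\[
\big(\mathcal{Z}_{\Delta, r}(n, \mu)\cdot \mathcal X_p^{0}\big)_{p}=\sum_{w\in \Gamma_0(N)\backslash L_{r\mu}[\Delta n]}\chi_{\Delta}(w)\,\big(\mathcal Z(w)\cdot \mathcal X_p^0\big)_p.
\]
Each Heegner point $Z(w)$ has discriminant $-4N\Delta n$ and, by the Deligne--Rapoport/Katz--Mazur description of $\mathcal X_0(N)\bmod p$, reduces onto one of the two components according to which factorization of the $p$-part of the cyclic $N$-isogeny its CM structure induces; this is controlled by a congruence condition on $w$ at $p$. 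Grouping the $w$ that reduce to $\mathcal X_p^0$ through the divisibility structure at $p$ exactly as in the proofs of Lemma~\ref{charsum} and Theorem~\ref{theo:Singularity}, the $\chi_{\Delta}$-weighted multiplicities should reorganize into a complete character sum $\sum_j \left(\tfrac{\Delta}{j}\right)=0$, and hence vanish. Combined with the sum relation above, both component intersections are $0$, which is the assertion of Proposition~\ref{vertical}.

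The main obstacle is the last step: making precise how a Heegner point of discriminant $-4N\Delta n$ distributes over the components $\mathcal X_p^0$ and $\mathcal X_p^{\infty}$, identifying the exact congruence that governs the reduction, and matching the genus-character weights $\chi_{\Delta}(w)$ to a vanishing complete character sum. This is the ordinary/supersingular-reduction analogue of the cusp computation, and — as in Proposition~6.7 of \cite{DY} — is where the $W_M$-invariance of $\chi_{\Delta}$ and the squarefreeness of $N$ are essential. I note that a purely global shortcut via the Atkin--Lehner involution $W_p$ is not sufficient on its own: $W_p$ swaps the components and permutes the $\C[L^\sharp/L]$-coefficients by some involution $\sigma_p$, yielding only the relation $A=\sigma_p B$ together with $A+B=0$ for the vector-valued pairings $A=\langle\widehat\phi_{\Delta,r},\mathcal X_p^{\infty}\rangle$, $B=\langle\widehat\phi_{\Delta,r},\mathcal X_p^{0}\rangle$, which does not force $A=B=0$. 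The local character-sum computation therefore seems unavoidable.
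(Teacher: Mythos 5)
There is a genuine gap: the decisive step of your argument --- showing that the $\chi_\Delta$-weighted local multiplicities $\sum_{w}\chi_\Delta(w)\,(\mathcal Z(w)\cdot\mathcal X_p^0)_p$ reorganize into a complete character sum --- is only announced, and you yourself flag it as the ``main obstacle.'' Identifying the congruence at $p$ that governs which component a CM point of discriminant $-4N\Delta n$ reduces to, and matching it against the genus character, is a nontrivial piece of local arithmetic (ordinary versus supersingular reduction, Serre--Tate/Deuring theory for the $p$-part of the cyclic isogeny) that is nowhere carried out; for the twisted cycles one does not even have the clean moduli description available in the $\Delta=1$ case, so the local multiplicities themselves are not pinned down. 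As it stands the proposal establishes only the sum relation $\langle\widehat{\phi}_{\Delta,r},\mathcal X_p^0\rangle+\langle\widehat{\phi}_{\Delta,r},\mathcal X_p^\infty\rangle=0$ (which is correct, and is also how the paper gets half of the statement), not the individual vanishing.

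Moreover, the ``purely global shortcut'' you dismiss is exactly the paper's proof, and your objection to it does not apply. The paper uses the Fricke involution $w_N=\kzxz{0}{-1}{N}{0}$, not $W_p$. On $L^\sharp/L$ the involution $w_N$ acts by $\mu\mapsto-\mu$, and since $\chi_\Delta(-w)=\chi_\Delta(w)$ and $Z(w)=Z(-w)$, one has $Z_{\Delta,r}(n,-\mu)=Z_{\Delta,r}(n,\mu)$; hence $w_N^*\widehat{\phi}_{\Delta,r}=\widehat{\phi}_{\Delta,r}$ \emph{coefficient by coefficient}, with no residual permutation $\sigma$. Since $w_N^*\mathcal X_p^0=\mathcal X_p^\infty$, this gives $\langle\widehat{\phi}_{\Delta,r},\mathcal X_p^0\rangle=\langle\widehat{\phi}_{\Delta,r},\mathcal X_p^\infty\rangle$ outright, and combining with $\langle\widehat{\phi}_{\Delta,r},\mathcal X_p\rangle=\langle\widehat{\phi}_{\Delta,r},(0,\log p^2)\rangle=I_{\Delta,r}(\tau,1)\log p=0$ (Corollary \ref{thetaintegral} for $\Delta>1$) forces both pairings to vanish. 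Your criticism is valid for a single Atkin--Lehner involution $W_p$ with $p\neq N$, where the induced permutation of $\C[L^\sharp/L]$ is nontrivial, but the choice $M=N$ removes the difficulty. So the correct repair of your argument is not the local computation but the observation that the relevant $\sigma$ is $\mu\mapsto-\mu$ and acts trivially on $\widehat{\phi}_{\Delta,r}$.
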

\begin{proof}Since  $\chi_{\Delta}(-w)=\chi_{\Delta}(w),$ similarly to  work  \cite{DY}, we obtain $w_N^* (\widehat{\phi}_{\Delta, r}(\tau)) = \widehat{\phi}_{\Delta, r}(\tau)$. It is known that $ w_N^* \mathcal X_p^0= \mathcal X_p^\infty $
with $w_N=\kzxz{0}{-1}{N}{0}.$
 $w_N$ is an isomorphism, then one has
\begin{align*}
&\langle \widehat{\phi}_{\Delta, r}(\tau), \mathcal X_p^0 \rangle
 =\langle \widehat{\phi}_{\Delta, r}(\tau), \mathcal X_p^\infty \rangle
 \\
 &=\frac{1}2 \langle \widehat{\phi}_{\Delta, r}(\tau), \mathcal X_p \rangle
  =\frac{1}2 \langle  \widehat{\phi}_{\Delta, r}(\tau),(0, \log p^2)  \rangle
  \\
  &=\frac{1}2 I_{\Delta, r}(\tau, 1) \log p
  = 0 .
\end{align*}
Notice that the principal arithmetic divisor $\widehat{\Div}(p) = (\mathcal X_p, -\log p^2)$, so $\mathcal X_p=(0, \log p^2)$ in arithmetic Chow group .
\end{proof}

\begin{theorem} \label{theo:mainresult} Let the notations be as above, then

$$
\langle \widehat{\phi}_{\Delta, r}(\tau), \widehat{\omega}_N \rangle =  \frac{1}{\varphi(N)}\log(u_{\Delta})h(\Delta)\mathcal{E}_{L}(\tau, 1).
$$
\end{theorem}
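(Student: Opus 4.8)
The plan is to prove the identity coefficient by coefficient and then reassemble the generating series. Writing $k=12\varphi(N)$, I would use the identification $\widehat\omega_N=\frac1k\widehat{\Div}(\Delta_N)$ with $\widehat{\Div}(\Delta_N)=(\Div\Delta_N,-\log\|\Delta_N\|^2)$, so that the $(n,\mu)$-th Fourier coefficient of the left-hand side is $\langle\widehat{\mathcal Z}_{\Delta,r}(n,\mu,v),\widehat\omega_N\rangle=\frac1k\langle\widehat{\mathcal Z}_{\Delta,r}(n,\mu,v),\widehat{\Div}(\Delta_N)\rangle$. Applying the extended height pairing of Section \ref{sect:ArithIntersectionReview} splits each coefficient into a finite (geometric) part $\frac1k(\mathcal Z_{\Delta,r}(n,\mu).\Div\Delta_N)_{\mathrm{fin}}$ and an archimedean star-product part $\frac1{2k}\,\Xi_{\Delta,r}(n,\mu,v)*(-\log\|\Delta_N\|^2)$. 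The goal is to show the finite part vanishes and the archimedean part assembles into $-\frac1k I_{\Delta,r}(\tau,\log\|\Delta_N\|)$, at which point Theorem \ref{theo:theoremlift} finishes the computation.

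For the finite part I would substitute $\Div\Delta_N=\frac{rk}{12}\mathcal P_\infty-k\sum_{p\mid N}\frac{p}{p-1}\mathcal X_p^0$ from Lemma \ref{divisorlemma}. Since a vertical divisor carries trivial Green datum, $(\mathcal Z_{\Delta,r}(n,\mu).\mathcal X_p^0)_{\mathrm{fin}}$ is exactly the $(n,\mu)$-th Fourier coefficient of $\langle\widehat\phi_{\Delta,r}(\tau),\mathcal X_p^0\rangle$, which vanishes by Proposition \ref{vertical}; hence the vertical terms contribute nothing. The cusp term $(\mathcal Z_{\Delta,r}(n,\mu).\mathcal P_\infty)_{\mathrm{fin}}$ vanishes because the Zariski closure of the Heegner divisor is disjoint, even at the finite places, from the closure of the cusps. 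Thus the entire finite part is zero.

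For the archimedean part I would use the star-product formula $\int_X g_1*g_2=\sum_{P\in Z_2}\operatorname{mult}_P g_1(P)+\int_X g_2\,\omega_1$ with $g_1=\Xi_{\Delta,r}(n,\mu,v)$, $Z_2=\Div\Delta_N$, $g_2=-\log\|\Delta_N\|^2$ and $\omega_1=\omega_{\Delta,r}(n,\mu,v)$. Because $\Div\Delta_N$ is supported on the cusps and $\Xi_{\Delta,r}(n,\mu,v)\to0$ there by Theorem \ref{theo:Singularity}, the pointwise sum drops out and the star product reduces to $-2\int_X\log\|\Delta_N\|\,\omega_{\Delta,r}(n,\mu,v)$. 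Recognizing $\omega_{\Delta,r}(n,\mu,v)$ as the $(n,\mu)$-th Fourier coefficient of the Kudla--Millson kernel $\Theta_{\Delta,r}(\tau,z)$, summation against $q^n e_\mu$ gives $\langle\widehat\phi_{\Delta,r}(\tau),\widehat\omega_N\rangle=-\frac1k I_{\Delta,r}(\tau,\log\|\Delta_N\|)$. Theorem \ref{theo:theoremlift} then yields $-\frac1k\cdot(-12)\log(u_\Delta)h(\Delta)\mathcal E_L(\tau,1)=\frac1{\varphi(N)}\log(u_\Delta)h(\Delta)\mathcal E_L(\tau,1)$, which is the claim.

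The main obstacle is making the archimedean step rigorous. Since $-\log\|\Delta_N\|^2$ carries log-log singularities at the cusps (Lemma \ref{lem:Delta_N}) while $\Xi_{\Delta,r}(n,\mu,v)$ is merely smooth and vanishing there, the naive Gillet--Soul\'e star product must be replaced by K\"uhn's extended pairing, and one must verify that no boundary contribution survives at the cusps. This is precisely where the sharp vanishing $\lim_{q_\ell\to0}\Xi_{\Delta,r}(n,\mu,v)=0$ of Theorem \ref{theo:Singularity}, together with the rapid $O(e^{-Cy^2})$ decay of the kernel $\Theta_{\Delta,r}$ (hence of $\omega_{\Delta,r}$) at the cusps, is essential: it guarantees both the convergence of $\int_X\log\|\Delta_N\|\,\omega_{\Delta,r}(n,\mu,v)$ and the cancellation of the boundary terms. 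A secondary point requiring justification, already used above, is the integral disjointness of the Heegner points from the cusps in the finite part.
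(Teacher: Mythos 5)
Your proposal is correct and follows essentially the same route as the paper's proof: identify $\widehat\omega_N$ with $\frac1k\widehat{\Div}(\Delta_N)$, dispose of the vertical components via Lemma \ref{divisorlemma} and Proposition \ref{vertical}, use the vanishing of $\Xi_{\Delta,r}(n,\mu,v)$ at the cusps (Theorem \ref{theo:Singularity}) to reduce K\"uhn's pairing to the star-product integral $-\int_{X_0(N)}\log\|\Delta_N\|\,\omega_{\Delta,r}(n,\mu,v)$ together with the disjointness of $\mathcal Z_{\Delta,r}(n,\mu)$ from $\mathcal P_\infty$, and conclude with Theorem \ref{theo:theoremlift}. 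The only cosmetic difference is that you handle the vertical terms coefficient by coefficient inside the finite intersection, whereas the paper first splits off $-k\sum_{p\mid N}\frac{p}{p-1}\mathcal X_p^0$ at the level of the generating series and then works with $\widehat{\Delta}_N$.
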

\begin{proof}
We denote
\begin{equation}
\widehat{\Delta}_N = (\frac{rk}{12} \mathcal P_\infty,  -\log \| \Delta_N(z) \|^2),
\end{equation}
so
\begin{equation} \label{eq6.10}
\widehat{\Div}(\Delta_N)= \widehat{\Delta}_N  - k \sum_{p \mid N}\frac{p}{p-1} \mathcal{X}_{p}^{0}.\nonumber
\end{equation}

From the Proposition \ref{vertical}, one has
\begin{eqnarray}\langle \widehat{\phi}_{\Delta, r}(\tau), \widehat{\Div}(\Delta_N) \rangle=\langle \widehat{\phi}_{\Delta, r}(\tau), \widehat{\Delta}_N \rangle.\nonumber
\end{eqnarray}
Now it's only need to prove
\begin{equation}\langle \widehat{\phi}_{\Delta, r}(\tau), \widehat{\Delta}_N \rangle =  12 \log(u_{\Delta})h(\Delta)\mathcal{E}_{L}(\tau, 1),\end{equation}
which  amounts to check their Fourier coefficients term by term.

By Theorem \ref{theoremlift}, it is suffice to prove
\begin{equation} \label{MainIdentity}
\langle \widehat{\mathcal Z}_{\Delta, r}(n, \mu, v), \widehat{\Delta}_N \rangle   =
 -\int_{X_0(N)} \log\|\Delta_N(z)\| \omega_{\Delta, r}(n,\mu, v).
\end{equation}

From the intersection formula \cite[Proposition 1.4]{Kuhn}, we get
$$
\langle \mathcal Z_{\Delta, r}(n, \mu, v),  \widehat{\Delta}_N \rangle
=(Z_{\Delta, r}(n, \mu, v), \frac{rk}{12} \mathcal P_\infty)_{fin} + \frac{1}2 g_1*g_2.
$$
Around the cusp $P_{\infty}$, from Theorem \ref{theo:Singularity},
 $$\lim_{q_{z}\longrightarrow 0}\Xi_{\Delta, r}(n , \mu, v)(z)=0,$$
then index $\alpha_{1, j}$ and $\psi_{1, j}(0)$ in equation (\ref{index}) associated to $ \widehat{\mathcal Z}_{\Delta, r}(n, \mu, v)$ are equal to zero(see details in \cite[Proposition 4.1]{DY}). The star product here is
\begin{eqnarray}
g_1*g_2&=&\Xi_{\Delta, r}(n , \mu, v)(z)*(-\log\|\Delta_N\|^2)\nonumber\\
&=&-2\int_{X_0(N)} \log\|\Delta_N\| \omega_{\Delta, r}(n, \mu, v).\nonumber
\end{eqnarray}

It's easy to find that $\mathcal Z_{\Delta, r}(n, \mu, v)$ and $\mathcal P_\infty$ have no intersection.
Then we obtain
$$
\langle \widehat{\mathcal Z}(n, \mu, v),  \widehat{\Delta}_N \rangle
=-\int_{X_0(N)} \log\|\Delta_N\| \omega_{\Delta, r}(n, \mu, v).
$$
This finishes the proof.

\end{proof}

\section{Modularity of the arithmetic theta function}

 In this section, we will prove the modularity of $\widehat{\phi}_{\Delta, r}(\tau)$. We will follow the methods in \cite[Chapter 4]{KRYBook} and \cite[Section 8]{DY}. To simplify the notation, we denote in this section  $X=X_0(N)$ and $\mathcal X=\mathcal X_0(N)$, and let  $S$ be   the set of cusps of $X$.  Let $g_{\GS}$ be a Gillet-Soul\'e Green function for the divisor $\Div \Delta_N$(without log-log singularity), and let $\widehat{\Delta}_{\GS} =(\Div \Delta_N, g_{\GS}) \in \widehat{\CH}_\R^1(\mathcal X)$, and
$
f_N= g_{\GS} + \log\|\Delta_N\|^2$. One has
$$
\widehat{\Delta}_{\GS}= \widehat{\Div}({\Delta}_N) + a(f_N),
$$
where $ a(f_N) =(0, f_N) \in  \widehat{\CH}_\R^1(\mathcal X, S)$.

Let $A(X)$ be the space of smooth  functions $f$ on $X$ which are conjugation invariant ($Frob_\infty$-invariant), and let   $A^0(X)$ be the subspace of functions $f \in A(X)$ with
$$
\int_{X} f \mu_{GS}=0,
$$
where $\mu_{GS} =c_1(\widehat{\Delta}_{\GS})$.

 For each $p|N$, let
$\mathcal Y_p =\mathcal X_p^\infty- p \mathcal X_p^0$, and
$\mathcal Y_p^\vee =\frac{1}{\langle \mathcal Y_p, \mathcal Y_p\rangle} \mathcal Y_p$. Finally let $\widetilde{\MW}$ be the orthogonal complement of
$\R \widehat{\Delta}_{\GS}  +\sum_{p|N} \R \mathcal Y_p^\vee + \R a(1) + a(A^0(X))$ in $\widehat{\CH}_\R^1(\mathcal X)$.

 Recall the result \cite[Proposition 8.3]{DY} as follows.
 Let $$\MW=J_0(N)\otimes_\Z \R,$$ where $J_0(N)$ is the jacobian, then there is an isomorphism
\begin{equation}\label{Modellweil}
\widetilde{\MW} \cong \MW,
\widehat{\mathcal Z}=(\mathcal Z, g_Z)\mapsto  Z,
\end{equation}
where $Z$ is the generic fiber of $\mathcal Z$.

\begin{proposition} \label{prop6.2} (\cite[Propositions 4.1.2, 4.1.4]{KRYBook}, \cite[Proposition 8.2]{DY}) One has
$$
\widehat{\CH}_\R^1(\mathcal X) = \widetilde{\MW}\oplus (\R \widehat{\Delta}_{\GS}  +\sum_{p|N} \R \mathcal Y_p^\vee  + \R a(1)) \oplus a(A^0(X)).
$$
More precisely, every $\widehat{Z} =(\mathcal Z, g_Z)$ decomposes into
$$
\widehat{Z}=\widetilde{Z}_{MW} + \frac{\deg \widehat{Z}}{\deg \widehat{\Delta}_{\GS}} \widehat{\Delta}_{\GS} + \sum_{p|N} \langle \widehat{Z}, \mathcal Y_p\rangle \mathcal Y_p^\vee + 2 \kappa(\widehat{Z}) a(1) + a(f_{\widehat Z})
$$
for some $f_{\widehat Z} \in A^0(X)$, where
$$
 \kappa(\widehat{Z})\deg \widehat{\Delta}_{\GS} =\langle \widehat{Z}, \widehat{\Delta}_{\GS} \rangle
 -\frac{\deg \widehat{Z}}{\deg \widehat{\Delta}_{\GS}} \langle \widehat{\Delta}_{\GS}, \widehat{\Delta}_{\GS} \rangle.
$$
\end{proposition}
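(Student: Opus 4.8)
The plan is to realize the stated splitting as a triangular decomposition with respect to the degree map and the extended height pairing, exactly in the spirit of \cite[Chapter 4]{KRYBook} and \cite[Section 8]{DY}. The whole point of the formula is that each coefficient on the right-hand side is pinned down by pairing the class against a single distinguished object, so the argument reduces to (i) recording the orthogonality relations among the special generators $\widehat{\Delta}_{\GS}$, the $\mathcal Y_p^\vee$, $a(1)$, and $a(A^0(X))$, and (ii) peeling off the summands one at a time and checking that the final remainder lies in $\widetilde{\MW}$ by its very definition as an orthogonal complement. The key preliminary facts I will use are the pairing formula $\langle a(f),(\mathcal Z,g)\rangle=\tfrac12\int_X f\,\omega_{(\mathcal Z,g)}$, the fact that $\deg(\mathcal Z,g)$ depends only on the degree of the generic fiber (so that $\deg a(f)=\deg a(1)=\deg\mathcal Y_p^\vee=\deg\widetilde{Z}_{MW}=0$ and only $\widehat{\Delta}_{\GS}$ carries degree), and the principality of the full fibers, $\mathcal X_p=(0,\log p^2)$ in $\widehat{\CH}^1_\R(\mathcal X)$.

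Concretely, given $\widehat Z=(\mathcal Z,g_Z)$ I first match degrees: since $\widehat{\Delta}_{\GS}$ is the only summand of nonzero degree, the coefficient must be $\deg\widehat Z/\deg\widehat{\Delta}_{\GS}$, so that $\widehat Z-\tfrac{\deg\widehat Z}{\deg\widehat{\Delta}_{\GS}}\widehat{\Delta}_{\GS}$ has degree $0$. Next I extract the vertical part by pairing against each $\mathcal Y_q$ with $q\mid N$. Using that vertical fibers over distinct primes do not meet, so $\langle\mathcal Y_p^\vee,\mathcal Y_q\rangle=\delta_{pq}$, together with the vanishings $\langle\widehat{\Delta}_{\GS},\mathcal Y_q\rangle=0$, $\langle a(1),\mathcal Y_q\rangle=\tfrac12\deg\mathcal Y_q=0$, $\langle a(A^0(X)),\mathcal Y_q\rangle=0$, and $\langle\widetilde{\MW},\mathcal Y_q\rangle=0$, the coefficient of $\mathcal Y_p^\vee$ is forced to be $\langle\widehat Z,\mathcal Y_p\rangle$. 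Finally I pair against $\widehat{\Delta}_{\GS}$: since $\langle\mathcal Y_p^\vee,\widehat{\Delta}_{\GS}\rangle=0$, $\langle a(A^0(X)),\widehat{\Delta}_{\GS}\rangle=0$, and $\langle a(1),\widehat{\Delta}_{\GS}\rangle=\tfrac12\deg\widehat{\Delta}_{\GS}$, requiring orthogonality of the remainder to $\widehat{\Delta}_{\GS}$ yields precisely $\kappa(\widehat Z)\deg\widehat{\Delta}_{\GS}=\langle\widehat Z,\widehat{\Delta}_{\GS}\rangle-\tfrac{\deg\widehat Z}{\deg\widehat{\Delta}_{\GS}}\langle\widehat{\Delta}_{\GS},\widehat{\Delta}_{\GS}\rangle$, the stated value of $\kappa$.

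It then remains to separate the Mordell--Weil and archimedean pieces. After the three subtractions above, the leftover class $\widehat Z''$ has degree $0$ and is orthogonal to every $\mathcal Y_p$ and to $\widehat{\Delta}_{\GS}$. Its generic fiber is a degree-$0$ divisor, hence a class in $\MW=J_0(N)\otimes_\Z\R$; by the isomorphism \eqref{Modellweil} of \cite[Proposition 8.3]{DY} there is a unique $\widetilde{Z}_{MW}\in\widetilde{\MW}$ with the same generic fiber. The difference $\widehat Z''-\widetilde{Z}_{MW}$ then has trivial generic fiber, so it is a combination of vertical and archimedean classes; the vertical contribution is annihilated by the orthogonality to all $\mathcal Y_p$ together with the principality of the full fibers $\mathcal X_p$, leaving a purely archimedean class $a(f_{\widehat Z})$. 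That $f_{\widehat Z}\in A^0(X)$ follows from $\langle a(f_{\widehat Z}),\widehat{\Delta}_{\GS}\rangle=\tfrac12\int_X f_{\widehat Z}\,\mu_{GS}=0$, which holds because both $\widehat Z''$ and $\widetilde{Z}_{MW}$ are orthogonal to $\widehat{\Delta}_{\GS}$. Directness of the sum is simultaneous with these computations, since a class in the intersection of any two summands pairs trivially against the generators that detect the others and has vanishing generic fiber where required.

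The main obstacle is establishing the orthogonality relations, above all $\langle\widehat{\Delta}_{\GS},\mathcal Y_p\rangle=0$ and the non-degeneracy of the height pairing on the span of the vertical classes (which is what allows $\mathcal Y_p^\vee$ to be defined and guarantees $\langle\mathcal Y_p,\mathcal Y_p\rangle\neq0$). These rest on the explicit structure of the special fiber of $\mathcal X_0(N)$ at $p\mid N$, namely the two components $\mathcal X_p^\infty$ and $\mathcal X_p^0$ and their intersection behavior, on the principality $\mathcal X_p=(0,\log p^2)$, and on the explicit shape of $\Div\Delta_N$ from Lemma \ref{divisorlemma}; this is precisely where the square-free hypothesis on $N$ enters. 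The second delicate input is the identification $\widetilde{\MW}\cong\MW$ of \eqref{Modellweil}, which is what makes the final separation of the Jacobian part from the archimedean part both possible and unique.
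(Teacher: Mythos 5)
The paper gives no proof of this proposition: it is quoted directly from \cite[Propositions 4.1.2, 4.1.4]{KRYBook} and \cite[Proposition 8.2]{DY}. Your reconstruction follows exactly the argument of those references --- fix the coefficient of $\widehat{\Delta}_{\GS}$ by matching degrees, the coefficient of each $\mathcal Y_p^\vee$ by pairing with $\mathcal Y_q$, the coefficient of $a(1)$ by pairing with $\widehat{\Delta}_{\GS}$, and send the degree-zero generic fiber of the remainder into $\widetilde{\MW}$ via the isomorphism \eqref{Modellweil}, with the leftover vertical part killed by the principality of $\mathcal X_p=(0,\log p^2)$ --- and it is correct. The one substantive input you assert rather than verify is the pair of facts $\langle\widehat{\Delta}_{\GS},\mathcal Y_p\rangle=0$ and $\langle\mathcal Y_p,\mathcal Y_p\rangle\neq 0$, without which the stated coefficients $\langle\widehat Z,\mathcal Y_p\rangle$ and $\kappa(\widehat Z)$ would pick up correction terms; you correctly identify that these come from the explicit intersection numbers of $\mathcal P_\infty$, $\mathcal X_p^0$, $\mathcal X_p^\infty$ together with Lemma \ref{divisorlemma}, which is precisely where \cite{DY} establishes them.
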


Finally, let $\Delta_z$ be the Laplacian operator with respect to $\mu_{\GS}$. Then  the space $A^0(X)$  has an orthonormal basis $\{ f_j\}$ with
$$
\Delta_z f_j + \lambda_j f_j=0,  \quad  \langle f_i, f_j \rangle = \delta_{ij}, \quad \hbox{ and } 0 < \lambda_1 < \lambda_2 < \cdots,
$$
where  the inner product is given by
$$
\langle f, g \rangle = \int_{X_0(N)} f \bar g  \mu_{\GS}.
$$
In particular, every $f \in  A^0(X)$ has the decomposition
\begin{equation} \label{eq:SpectralDecomposition}
f(z) = \sum  \langle f, f_j \rangle   f_j.
\end{equation}
Recall (\cite[(4.1.36)]{KRYBook}) that
\begin{equation}
d_z  d_z^c f =\Delta_z(f)  \mu_{\GS}.
\end{equation}

Now we could  prove the following modularity result.

\begin{theorem} Let the notation be as above. Then
$$
\widehat{\phi}_{\Delta, r}(\tau) = \tilde{\phi}_{\MW}(\tau)  + a(\phi_{SM})+\phi_{1}(\tau)a(1)
$$
where $\tilde{\phi}_{\MW}(\tau)$ is a modular form of $\Gamma'$  of weight $3/2$ and representation  $\rho_L$ valued in  finite dimension vector space $\widetilde{MW} \otimes \C[L^\sharp/L]$,  $\phi_{SM}$  and $\phi_{1}(\tau)$ are a modular forms of $\Gamma'$  of weight $3/2$ and representation  $\rho_L$ valued in $A^0(X_0(N)) \otimes \C[L^\sharp/L]$.
\end{theorem}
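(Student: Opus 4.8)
The plan is to expand $\widehat{\phi}_{\Delta, r}(\tau)$ coefficientwise along the orthogonal decomposition of Proposition~\ref{prop6.2} and to prove that the generating series attached to each summand is a modular form of weight $\tfrac32$ and representation $\rho_L$; assembling them gives the assertion, with $\tilde{\phi}_{\MW}(\tau)$, $\phi_{1}(\tau)$ and $\phi_{SM}$ the pieces landing in $\widetilde{\MW}$, $\R a(1)$ and $a(A^0(X))$ respectively. Concretely, each Fourier coefficient $\widehat{\mathcal Z}_{\Delta, r}(n,\mu,v)$ decomposes as in Proposition~\ref{prop6.2}, and I would first show that two of the five terms contribute nothing. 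The $\widehat{\Delta}_{\GS}$-term has coefficient $\deg\widehat{\mathcal Z}_{\Delta, r}(n,\mu,v)/\deg\widehat{\Delta}_{\GS}$, whose generating series is $\deg\widehat{\phi}_{\Delta, r}(\tau)/\deg\widehat{\Delta}_{\GS}=0$ by \eqref{degree}. The $\mathcal Y_p^\vee$-terms have coefficients $\langle\widehat{\mathcal Z}_{\Delta, r}(n,\mu,v),\mathcal Y_p\rangle$, and since $\mathcal Y_p=\mathcal X_p^\infty-p\,\mathcal X_p^0$, Proposition~\ref{vertical} forces their generating series to vanish as well.

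It remains to treat the Mordell--Weil, $a(1)$ and spectral parts. For the first, under the isomorphism \eqref{Modellweil} the class $\widetilde{Z}_{MW}$ attached to $\widehat{\mathcal Z}_{\Delta, r}(n,\mu,v)$ maps to the generic fibre $Z_{\Delta, r}(n,\mu)\in J_0(N)\otimes\R$, so $\tilde{\phi}_{\MW}(\tau)$ is exactly the image of the Bruinier--Ono generating series $A_{\Delta, r}(\tau)$, which they proved to be a cusp form valued in $S_{3/2,\rho_L}\otimes J(\Q(\sqrt{\Delta}))$; hence $\tilde{\phi}_{\MW}(\tau)$ is modular. For the $a(1)$-part the coefficient is $2\kappa(\widehat{\mathcal Z}_{\Delta, r}(n,\mu,v))$, and since $\deg\widehat{\phi}_{\Delta, r}(\tau)=0$ the defining formula for $\kappa$ in Proposition~\ref{prop6.2} reduces on generating series to $\langle\widehat{\phi}_{\Delta, r}(\tau),\widehat{\Delta}_{\GS}\rangle/\deg\widehat{\Delta}_{\GS}$. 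Writing $\widehat{\Delta}_{\GS}=\widehat{\Div}(\Delta_N)+a(f_N)$ with $\widehat{\Div}(\Delta_N)=k\,\widehat{\omega}_N$, the first contribution is $k\langle\widehat{\phi}_{\Delta, r}(\tau),\widehat{\omega}_N\rangle$, modular by \eqref{intersectionformula}, while the second is $\tfrac12\,I_{\Delta, r}(\tau,f_N)$, a theta lift that is modular by the modularity of $\Theta_{\Delta, r}(\tau,z)$; thus $\phi_{1}(\tau)$ is modular.

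The heart of the argument is the spectral part $a(\phi_{SM})$. Using the spectral expansion \eqref{eq:SpectralDecomposition}, it suffices to show that for each eigenfunction $f_j$ with $\lambda_j>0$ the series $\sum_{n,\mu}\langle f_{\widehat{\mathcal Z}_{\Delta, r}(n,\mu,v)},f_j\rangle\,q^n e_\mu$ is modular. I would compute this by pairing the entire decomposition against $a(f_j)$. On one hand, $\langle\widehat{\phi}_{\Delta, r}(\tau),a(f_j)\rangle=\tfrac12\int_{X}f_j\,\Theta_{\Delta, r}(\tau,z)=\tfrac12\,I_{\Delta, r}(\tau,f_j)$, a cusp form of weight $\tfrac32$. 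On the other hand, all the previously listed pieces pair trivially with $a(f_j)$: indeed $f_j\in A^0(X)$ is orthogonal to $\mu_{\GS}$, the vertical classes carry no archimedean curvature, and the Mordell--Weil and constant pieces have curvature proportional to the (vanishing) degree. What survives is $\langle a(f_{\widehat{\mathcal Z}}),a(f_j)\rangle=-\tfrac{\lambda_j}2\langle f_{\widehat{\mathcal Z}},f_j\rangle$, obtained from $d_z d_z^c f_j=\Delta_z(f_j)\mu_{\GS}=-\lambda_j f_j\,\mu_{\GS}$ together with integration by parts. Comparing the two computations yields the clean identity $\sum_{n,\mu}\langle f_{\widehat{\mathcal Z}_{\Delta, r}(n,\mu,v)},f_j\rangle\,q^n e_\mu=-\lambda_j^{-1}\,I_{\Delta, r}(\tau,f_j)$, which is modular term by term.

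The main obstacle is then analytic rather than algebraic: I must show that the resulting spectral sum $\phi_{SM}=-\sum_j\lambda_j^{-1}\,I_{\Delta, r}(\tau,f_j)\,f_j$ converges to a genuine $A^0(X)$-valued modular form, so that coefficientwise modularity survives summation over $j$. This requires uniform control of the Fourier coefficients of the theta lifts $I_{\Delta, r}(\tau,f_j)$ against the eigenvalues $\lambda_j$ and the sup-norms of $f_j$, together with Weyl's law, exactly as in \cite[Chapter 4]{KRYBook} and in the case $\Delta=1$ of \cite[Section 8]{DY}; establishing these estimates in the twisted setting is the only delicate point. Here the smoothness of $\Xi_{\Delta, r}(n,\mu,v)$ at the cusps (Theorem~\ref{theo:Singularity}) is what allows the integration-by-parts and convergence arguments to proceed without boundary corrections, which is precisely the feature that distinguishes the twisted case $\Delta>1$ from the untwisted one.
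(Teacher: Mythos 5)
Your proposal is correct and follows the same architecture as the paper's proof: decompose coefficientwise via Proposition~\ref{prop6.2}, kill the $\widehat{\Delta}_{\GS}$- and $\mathcal Y_p^\vee$-components using $\deg\widehat{\phi}_{\Delta,r}=0$ and Proposition~\ref{vertical}, identify the Mordell--Weil part with the Bruinier--Ono cusp form under \eqref{Modellweil}, and handle the spectral part by pairing against the Laplace eigenbasis so that $-\lambda_j\langle\phi_{SM},f_j\rangle$ becomes the (manifestly modular) theta lift $I_{\Delta,r}(\tau,f_j)$. The one place you genuinely diverge is the $a(1)$-component: the paper proves modularity of $\phi_1(\tau)=\frac{2}{\deg\widehat{\Delta}_{\GS}}\langle\widehat{\phi}_{\Delta,r},\widehat{\Delta}_{\GS}\rangle$ in Lemma~\ref{prop7.1} by writing the pairing as $\frac12\int g_{\GS}\,dd^c(\phi_{SM}+g_{MW})$ and invoking the modularity of $\phi_{SM}$ and $g_{MW}$ --- an argument that, as stated, is circular, since $\phi_{SM}$ is defined by subtracting $\phi_1(\tau)$. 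Your route instead splits $\widehat{\Delta}_{\GS}=\widehat{\Div}(\Delta_N)+a(f_N)=k\widehat{\omega}_N+a(f_N)$ and gets modularity directly from the already-established identity \eqref{intersectionformula} together with the theta lift $\frac12 I_{\Delta,r}(\tau,f_N)$; this is cleaner, breaks the circularity, and in fact reproves Lemma~\ref{prop7.1} independently. You also make explicit the convergence of the spectral sum $\sum_j\lambda_j^{-1}I_{\Delta,r}(\tau,f_j)f_j$, which the paper (following \cite{KRYBook}) leaves implicit; your observation that Theorem~\ref{theo:Singularity} removes boundary terms in the integration by parts is exactly the point that makes the twisted case go through.
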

\begin{proof} Since $\deg(Z_{\Delta, r}(n ,\mu))=0$, one knows that the generic part $A_{\Delta, r}(\tau)\in J_0(N)\otimes \C[L^{\sharp}/L][[q]]$.
Here
\begin{equation}
A_{\Delta, r}(\tau)
  =\sum_{n >0,  \mu} Z_{\Delta, r}(n,\mu)  q_{\tau}^n e_\mu
\end{equation}
is modular by Bruinier and Ono's result \cite[Section 6]{BruOno}. Let $\widetilde{\phi}_{MW}$ be the image of $A_{\Delta, r}(\tau)$ under the isomorphism (\ref{Modellweil}), and it's also modular.

 From  decomposition result Proposition \ref{prop6.2}, one has
 \begin{equation}\label{decom}
 \widehat{\phi}_{\Delta, r}(\tau)=\widetilde{\phi}_{MW}+a( \phi_{SM})+\phi_{1}(\tau)a(1),
\end{equation}
where $\widetilde{\phi}_{MW}=(A_{\Delta, r}(\tau), g_{MW})$ and $g_{MW}$ is the  harmonic Green function and $\phi_{1}(\tau)= \frac{2}{\deg(\widehat{\Delta}_{\GS})} \langle \widehat{\phi}_{\Delta, r}, \widehat{\Delta}_{\GS} \rangle$ which is modular from the later Lemma \ref{prop7.1}. Then one has
$$
 \phi_{SM} = \Xi_{\Delta, r}(\tau, z)  - g_{MW} -\phi_{1}(\tau),
 $$
 where $ \Xi_{\Delta, r}(\tau, z)=\sum_{\mu}\sum_{n}\Xi_{\Delta, r}(n , \mu, v)q_{\tau}^{n}e_{\mu}$.
In the above decomposition, the part in $\R \widehat{\Delta}_{\GS}  +\sum_{p|N} \R \mathcal Y_p^\vee  $ is zero.

Finally, one has by  (\ref{eq:SpectralDecomposition}),
 \begin{equation}\label{spectraldecom}
 \phi_{SM}(\tau, z) =\sum_{j} \langle \phi_{SM} , f_j \rangle f_j.
 \end{equation}
 Simple calculation gives
 \begin{align*}
 \langle \phi_{SM} , f_j \rangle &=-\frac{1}{\lambda_j} \int_{X_0(N)} \phi_{SM}(\tau, z) \Delta_z(\bar{f}_j) \mu_{\GS}
 \\
 &= -\frac{1}{\lambda_j} \int_{X_0(N)} \phi_{SM}(\tau, z) d_z d_z^c \bar{f}_j
 \\
 &=-\frac{1}{\lambda_j} \int_{X_0(N)} d_z d_z^c \phi_{SM}(\tau, z)  \bar{f}_j.
 \end{align*}

 So
 \begin{eqnarray}
 -\lambda_j \langle \phi_{SM} , f_j \rangle
  =\int_{X_0(N)} d_z d_z^c (\Xi_{\Delta, r}(\tau, z) -g_{MW})f_j =
\int_{X_0(N)} \Theta_{\Delta, r}(\tau, z) f_j\nonumber
\end{eqnarray}
is modular, since $\Theta_{\Delta, r}(\tau, z)$  is modular. From the spectral decomposition (\ref{spectraldecom}), $\phi_{SM}$ is modular.

\end{proof}

From the proof of above theorem, we have the following result.

\begin{lemma}  \label{prop7.1}
$\langle \widehat{\phi}_{\Delta, r}, \widehat{\Delta}_{\GS} \rangle $ is a vector valued modular form of $\Gamma'$  valued in  $\C[L^\sharp/L]$ of weight $3/2$ and representation $\rho_L$.
\end{lemma}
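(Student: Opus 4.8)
The plan is to exploit the decomposition $\widehat{\Delta}_{\GS} = \widehat{\Div}(\Delta_N) + a(f_N)$ recorded at the beginning of this section, together with the bilinearity of the extended height pairing, to split
\[
\langle \widehat{\phi}_{\Delta, r}(\tau), \widehat{\Delta}_{\GS} \rangle
= \langle \widehat{\phi}_{\Delta, r}(\tau), \widehat{\Div}(\Delta_N) \rangle
+ \langle \widehat{\phi}_{\Delta, r}(\tau), a(f_N) \rangle .
\]
The first summand has essentially already been computed: since $\widehat{\omega}_N = \frac{1}{k}\widehat{\Div}(\Delta_N)$ with $k = 12\varphi(N)$, Theorem \ref{theo:mainresult} gives $\langle \widehat{\phi}_{\Delta, r}(\tau), \widehat{\Div}(\Delta_N)\rangle = 12\log(u_\Delta)h(\Delta)\,\mathcal{E}_L(\tau, 1)$, a scalar multiple of the weight-$3/2$ Eisenstein series $\mathcal{E}_L(\tau,1)$ for $(\Gamma',\rho_L)$ and hence modular of exactly the asserted type. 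So I would reduce everything to the archimedean term $\langle \widehat{\phi}_{\Delta, r}(\tau), a(f_N)\rangle$.

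To treat that term I would argue coefficient by coefficient. Because $a(f_N) = (0, f_N)$ carries the trivial divisor, its finite intersection with each $\widehat{\mathcal{Z}}_{\Delta, r}(n,\mu,v)$ vanishes, so the pairing formula $\langle(\mathcal{Z}_1, g_1), (\mathcal{Z}_2, g_2)\rangle = (\mathcal{Z}_1.\mathcal{Z}_2)_{fin} + \tfrac12 g_1 * g_2$ collapses to the purely archimedean star product. Using the current equation of Corollary \ref{cor:GreenFunction}, the star product evaluates against the curvature form, giving
\[
\langle \widehat{\mathcal{Z}}_{\Delta, r}(n,\mu,v), a(f_N) \rangle
= \tfrac12 \int_{X_0(N)} f_N(z)\, \omega_{\Delta, r}(n,\mu,v).
\]
Assembling these over all $n \equiv Q(\mu)\ (\operatorname{mod}\Z)$ and $\mu \in L^\sharp/L$ with the weights $q^n e_\mu$, and observing that $\sum_{n,\mu}\omega_{\Delta,r}(n,\mu,v)\,q^n e_\mu = \Theta_{\Delta,r}(\tau,z)$ is precisely the twisted Kudla--Millson theta kernel, I would obtain
\[
\langle \widehat{\phi}_{\Delta, r}(\tau), a(f_N) \rangle
= \tfrac12 \int_{X_0(N)} f_N(z)\, \Theta_{\Delta, r}(\tau, z)
= \tfrac12\, I_{\Delta, r}(\tau, f_N).
\]

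The modularity of this theta lift in $\tau$ is then immediate from \cite[Proposition 4.1]{AE}: $\Theta_{\Delta, r}(\tau, z)$ transforms as a non-holomorphic modular form of weight $3/2$ for $(\Gamma', \rho_L)$ in $\tau$, and integrating out the $z$-variable leaves this transformation law intact. Adding the two pieces exhibits $\langle\widehat{\phi}_{\Delta,r}(\tau), \widehat{\Delta}_{\GS}\rangle$ as a sum of two weight-$3/2$ modular forms for $\rho_L$ valued in $\C[L^\sharp/L]$, which is the claim.

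The step I expect to be the main obstacle is the passage from the coefficientwise identity to the single theta integral. One must verify that $\sum_{n,\mu}\omega_{\Delta,r}(n,\mu,v)\,q^n e_\mu$ genuinely reproduces $\Theta_{\Delta,r}(\tau,z)$ — including the contributions from $n\le 0$, where $\widehat{\mathcal{Z}}_{\Delta,r}(n,\mu,v)$ has trivial divisor but the very same curvature form — and that the sum and the integral may be interchanged so that $I_{\Delta,r}(\tau, f_N)$ converges. Convergence is controlled by the rapid decay $\Theta_{\Delta,r}(\tau,z) = O(e^{-Cy^2})$ at the cusps established in Section \ref{sectheta}, which dominates the at-worst log-log growth of $f_N = g_{\GS} + \log\|\Delta_N\|^2$ (the log singularities of $g_{\GS}$ along $\Div\Delta_N$ being cancelled by those of $\log\|\Delta_N\|^2$); this is the same estimate that rendered the lifts $I_{\Delta,r}(\tau, \mathcal{E}(N,z,s))$ and $I_{\Delta,r}(\tau,\log\|\Delta_N\|)$ convergent earlier.
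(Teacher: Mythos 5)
Your argument is correct, but it follows a genuinely different route from the paper's. The paper's proof is a two-line appeal to K\"uhn's intersection formula, writing $\langle \widehat{\phi}_{\Delta, r},  \widehat{\Delta}_{\GS}\rangle = \tfrac{1}{2}\int_X g_{\GS}\, dd^c\,\Xi_{\Delta,r}(\tau,z) = \tfrac{1}{2}\int_X g_{\GS}\, dd^c(\phi_{SM}+g_{MW})$ and quoting the modularity of $\phi_{SM}$ and $g_{MW}$ from the spectral-decomposition theorem immediately preceding the lemma; you instead split $\widehat{\Delta}_{\GS} = \widehat{\Div}(\Delta_N) + a(f_N)$, dispose of the first summand via Theorem \ref{theo:mainresult} (correctly rescaled by $k=12\varphi(N)$), and exhibit the second as the theta lift $\tfrac12 I_{\Delta,r}(\tau,f_N)$, whose modularity is inherited directly from the transformation law of $\Theta_{\Delta,r}(\tau,z)$. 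What your route buys is independence from the modularity theorem: as written, that theorem cites this very lemma for the modularity of $\phi_1(\tau)$ while the lemma's proof cites the theorem back, an apparent circularity that must be untangled (it can be, since $\phi_{SM}$ and $g_{MW}$ are handled without reference to $\phi_1$, but the dependence is delicate); your proof sidesteps this entirely and moreover yields an explicit closed form, $12\log(u_\Delta)h(\Delta)\mathcal{E}_L(\tau,1) + \tfrac12 I_{\Delta,r}(\tau,f_N)$, rather than bare modularity. The price is the convergence and coefficientwise-assembly verification for $I_{\Delta,r}(\tau,f_N)$, which you correctly identify as the delicate step and handle properly: the identity $\sum_{n,\mu}\omega_{\Delta,r}(n,\mu,v)q^n e_\mu = \Theta_{\Delta,r}(\tau,z)$ holds exactly (the $w=0$ term is harmless since $\chi_\Delta(0)=0$), and $f_N$ has only log-log growth at the cusps because the genuine log singularities of $g_{\GS}$, supported on $\Div(\Delta_N)$ which meets the generic fiber only at the cusps, cancel against those of $\log\|\Delta_N\|^2$, so the $O(e^{-Cy^2})$ decay of the kernel guarantees convergence.
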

\begin{proof}
 From the intersection formula \cite[Proposition 1.4]{Kuhn}, one has
\begin{eqnarray}
\langle \widehat{\phi}_{\Delta, r},  \widehat{\Delta}_{\GS}\rangle&=& \frac{1}{2} \int_{X} g_{GS} dd^{c}\Xi_{\Delta, r}(\tau, z)=\frac{1}{2} \int_{X} g_{GS} dd^{c}(\phi_{SM}+g_{MW})\nonumber
\end{eqnarray}
%&=&\frac{1}{2} \int_{X} g_{GS} dd^{c}\phi_{SM}- \frac{rk}{24}g_{MW}(P_{\infty}).
 %Here $$dd^{c}g_{GS}+\delta_{\frac{rk}{12}P_{\infty}}=\mu_{GS}.$$
Since $\phi_{SM}$ and $g_{MW}$ are modular from above theorem, we know $\langle \widehat{\phi}_{\Delta, r},  \widehat{\Delta}_{\GS}\rangle$ is modular.

\end{proof}

%\newpage
%\setcounter{page}{1}
%\addcontentsline{toc}{chapter}{Acknowledgements}
%\thispagestyle{empty}

\end{document}